\numberwithin{equation}{section}
\DeclarePairedDelimiter\floor{\lfloor}{\rfloor}
\theoremstyle{plain}
\newtheorem{Th}{Theorem}[section]
\newtheorem{Lemma}[Th]{Lemma}
\newtheorem{Prop}[Th]{Proposition}
 \theoremstyle{definition}
\newtheorem{Def}[Th]{Definition}
\newtheorem{Conj}[Th]{Conjecture}
\newtheorem{Rem}[Th]{Remark}
\newtheorem{?}[Th]{Problem}
\newtheorem{Ex}[Th]{Example}
\begin{document}
\title[formulas for the coefficients of the Al-Salam-Chihara polynomials]{Combinatorial formulas for the coefficients of the Al-Salam-Chihara polynomials}

\author{Donghyun Kim}

\email{donghyun\_kim@berkeley.edu}



\begin{abstract} The Al-Salam-Chihara polynomials are an important family of orthogonal polynomials in one variable $x$ depending on 3 parameters $\alpha$, $\beta$ and $q$. They are closely connected to a model from statistical mechanics called the partially asymmetric simple exclusion process (PASEP) and they can be obtained as a specialization of the Askey-Wilson polynomials. We give two different combinatorial formulas for the coefficients of the (transformed) Al-Salam-Chihara polynomials. Our formulas make manifest the fact that the coefficients are polynomials in  $\alpha$, $\beta$ and $q$ with positive coefficients.
\end{abstract}

\maketitle

\section{Introduction}
In the last few decades, there has been a lot of work on finding combinatorial formulas for moments of orthogonal polynomials (see \cite{SMW}, \cite{SW1}, \cite{CW}, \cite{MU}), particularly when they are polynomials with positive coefficients. The Al-Salam-Chihara polynomials are an important class of orthogonal polynomials in one variable $x$ which are connected to a model from statistical mechanics called the partially asymmetric simple exclusion process (PASEP). There have been some works on the combinatorics of the Al-Salam-Chihara polynomials (see \cite{CA}); this work has focused on the moments of the Al-Salam-Chihara polynomials, not the coefficients, as the coefficients fail to be positive polynomials. In this paper, we introduce the \textit{transformed} Al-Salam-Chihara polynomials, which do have positive coefficients and give two manifestly positive combinatorial formulas for the coefficients.

\textit{Orthogonal polynomials} in one variable $(p_n(x))_{n \geq 0}$ are a family of polynomials such that the degree of $p_n(x)$ is $n$ and are orthogonal with respect to a certain measure $\omega$, that is
$$\int p_n(x) p_m(x) d \omega =0, \text{\hspace{2mm}for $m\neq n$ }.$$
Monic orthogonal polynomials can be also defined by a three-term recurrence relation
$$p_{n+1}(x)=(x-b_n)p_n(x)-\lambda_{n}p_{n-1}(x),$$
with $p_0(x)=1$ and $p_{-1}(x)=0$ and where $(b_n)_{n\geq0}$ and $(\lambda_n)_{n\geq1}$ are constants (see \cite{Favard}). We call $(b_n)_{n\geq0}$ and $(\lambda_n)_{n\geq1}$ the \textit{structure constants} of $(p_n(x))_{n \geq 0}$. The \textit{N-th moments} $\mu_N$ of $(p_n(x))_{n \geq 0}$ are defined as 
$\mu_N=\int x^N d \omega,  \text{\hspace{2mm}for $N \geq 0$ }.$

The (monic) Al-Salam-Chihara polynomials are orthogonal polynomials with three free parameters $(a,b,q)$. They are in the basic Askey scheme (see \cite{AS}) and can be obtained as the specialization of the Askey-Wilson polynomials at $c=d=0$. The Al-Salam-Chihara polynomials may be defined by the following three-term recurrence relation (see \cite{ASC})
\begin{align*}
  &p_{n+1}(x)=(x-b_n)p_n(x)-\lambda_{n}p_{n-1}(x) \\
  &b_n=\frac{(a+b)q^n}{2} \nonumber \\
  &\lambda_n=\frac{(1-q^n)(1-a b q^{n-1})}{4}.
\end{align*}

Surprisingly, the moments of the Al-Salam Chihara polynomials are connected to a model from statistical mechanics called the partially asymmetric simple exclusion process (PASEP) (see \cite{SS}, \cite{MU}). The PASEP is a model of interacting particles hopping left and right on a one-dimensional lattice of $N$ sites. Each site can be either occupied by a particle or empty and transition rates between states are proportional to $\alpha$, $\beta$ and $q$ (see Figure \ref{Pic}). Then the partition function $Z_N$ of the PASEP can be written in terms of moments of the Al-Salam-Chihara polynomials (see \cite{MU}, Section 6.1) as follows 
\begin{equation}\label{PASEP}
    Z_N=\sum\limits_{k=0}^{N} \binom{N}{k}(\frac{2 \alpha \beta}{1-q})^N \mu_{N-k},
\end{equation}
using the change of variables
\begin{align}\label{change}
    a=\frac{1-q-\alpha}{\alpha}, \text{\hspace{2mm}} b=\frac{1-q-\beta}{\beta}.
\end{align}
\begin{figure}[ht]\centering
\begin{tikzpicture}[scale=1.5]
\draw[-] (0.5,0) -- (0.8,0);
\draw[-] (0.9,0) -- (1.2,0);
\draw[-] (1.3,0) -- (1.6,0);
\draw[-] (1.7,0) -- (2.0,0);
\draw[-] (2.1,0) -- (2.4,0);
\draw[-] (2.5,0) -- (2.8,0);
\draw[-] (2.9,0) -- (3.2,0);
\filldraw[black] (3.05,0.2) circle (3pt) ;
\filldraw[black] (1.45,0.2) circle (3pt) ;
\draw[->] (3.05, 0.35) .. controls (3.225,0.5) .. (3.4,0.35);
\draw[->] (1.47, 0.35) .. controls (1.65,0.5) .. (1.85,0.35);
\draw[->] (1.43, 0.35) .. controls (1.25,0.5) .. (1.05,0.35);
\draw[->] (0.3, 0.35) .. controls (0.475,0.5) .. (0.65,0.35);
\filldraw[black] (3.225,0.4) circle (0.0000000001pt) node[anchor=south] {$\beta$};
\filldraw[black] (1.65,0.45) circle (0.0000000001pt) node[anchor=south] {1};
\filldraw[black] (1.25,0.45) circle (0.0000000001pt) node[anchor=south] {$q$};
\filldraw[black] (0.475,0.4) circle (0.0000000001pt) node[anchor=south] {$\alpha$};
\end{tikzpicture}
\caption{The figure shows transition rates of the PASEP} \label{Pic}
\end{figure}
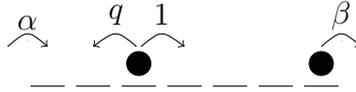

Motivated by the connection \eqref{PASEP} with the PASEP, we consider a family $(p'_n(x))_{n\geq0}$ of polynomials where $$p'_n(x)=(\frac{2\alpha\beta}{q-1})^n p_n(\frac{q-1}{2\alpha \beta}x-1),$$ using the change of variables \eqref{change}. Then the $N$-th moment of $(p'_n(x))_{n\geq0}$ becomes $(-1)^N Z_N$, and we have the following three-term recurrence relation
\begin{align}\label{eq1}
  &p'_{n+1}(x)=(x+b_n)p'_n(x)-\lambda_{n}p'_{n-1}(x) \\
  &b_n=(\alpha+\beta)q^n+2 \alpha \beta [n]_q \nonumber \\
  &\lambda_n=(\alpha\beta)^2[n]_q[n-1]_q +\alpha\beta(\alpha+\beta)q^{n-1}[n]_q+\alpha \beta(q^{2n-1}-q^{n-1}). \nonumber
\end{align} 

In \cite{AL}, a combinatorial formula for $Z_N$ was given in terms of 
 permutation tableaux, showing in particular that it is a polynomial in $\alpha$, $\beta$ and $q$ with positive coefficients. 
 
 Computing $p'_n(x)$ for small $n$ we have 
\begin{align*}
    p'_0(x)=&1\\
    p'_1(x)=&x+(\alpha+\beta)\\
    p'_2(x)=&x^2+( \alpha+\beta+ \alpha q+\beta q+2\alpha\beta)x+(\alpha^2 q+\beta^2 q+\alpha\beta+\alpha\beta q+\alpha\beta^2+\alpha^2\beta)\\
    p'_3(x)=&x^3+(\alpha+\beta+\alpha q+\beta q+ \alpha q^2+\beta q^2+4\alpha\beta+2\alpha\beta q)x^2\\
    &+(\alpha \beta + 3 \alpha^2 \beta + 3 \alpha \beta^2 + 3 \alpha^2 \beta^2 + \alpha^2 q + 2 \alpha \beta q + 3 \alpha^2 \beta q + 
 \beta^2 q + 3 \alpha \beta^2 q + 3 \alpha^2 \beta^2 q \\&+ \alpha^2 q^2 + 2 \alpha \beta q^2 + 3 \alpha^2 \beta q^2 +
  \beta^2 q^2 + 3 \alpha \beta^2 q^2 + \alpha^2 q^3 + \alpha \beta q^3 + \beta^2 q^3)x+(2 \alpha^2 \beta^2 \\&+ \alpha^3 \beta^2 + \alpha^2 \beta^3 + \alpha^2 \beta q + \alpha^3 \beta q + \alpha \beta^2 q + 
 2 \alpha^2 \beta^2 q + \alpha^3 \beta^2 q + \alpha \beta^3 q + \alpha^2 \beta^3 q + \alpha^2 \beta q^2 \\&+ 
 2 \alpha^3 \beta q^2 + \alpha \beta^2 q^2 + 2 \alpha^2 \beta^2 q^2 + 2 \alpha \beta^3 q^2 + \alpha^3 q^3 + 
 \alpha^2 \beta q^3 + \alpha \beta^2 q^3 + \beta^3 q^3).
\end{align*}
Note that it is not obvious from the recurrence \eqref{eq1} that the coefficients are polynomials in $\alpha$, $\beta$ and $q$ with positive coefficients. 

It is worth noting that specializing to $q=1$, the polynomial $p'_n(x)$ becomes 
\begin{equation*}
    p'_n(x)=\sum\limits_{k=0}^{n} (\binom{n}{k}\prod\limits_{i=n-k}^{n-1}(\alpha+\beta+i \alpha\beta))x^{n-k},
\end{equation*}
which can be easily proved by induction. The coefficients of $p'_n(x)$ have a nice factorization formula in this case; however they do not factorize in general.

In this paper we will give two different combinatorial formulas for these coefficients making manifest that they are polynomials in $\alpha$, $\beta$ and $q$ with positive coefficients. To do this we introduce the following more general orthogonal polynomials.
\begin{Def}\label{definition} The \textit{transformed Al-Salam-Chihara} polynomials $(\hat{p}_n(x))_{n\geq0}$ are the family of orthogonal polynomials in one variable $x$ depending on parameters $\alpha$, $\beta$, $\epsilon_1$, $\epsilon_2$ and $q$ defined  by the following three-term recurrence relation  
\begin{align}\label{eq3}
  &\hat{p}_{n+1}(x)=(x+b_n)\hat{p}_n(x)-\lambda_{n}\hat{p}_{n-1}(x) \\
  &b_n=(\alpha+\beta)q^n+(\epsilon_1+\epsilon_2)[n]_q \nonumber\\
  &\lambda_n=\epsilon_1\epsilon_2 [n]_q [n-1]_q+(\alpha \epsilon_2+\beta \epsilon_1)q^{n-1} [n]_q+\alpha \beta (q^{2n-1}-q^{n-1}). \nonumber
\end{align}
\end{Def}
\begin{Rem}
The connection with the PASEP provided some inspiration for Definition \ref{definition}. In particular, there is a 1-parameter generalization of the partition function $Z_N$ called the fugacity partition function $Z_N(\xi)$ where $\xi$ is a variable keeping track of the number of particles for each state. This connection leads to the following family of orthogonal polynomials defined by the three-term recurrence relation \begin{align}\label{eq2}
  &p''_{n+1}(x)=(x+b_n)p''_n(x)-\lambda_{n}p''_{n-1}(x) \\
  &b_n=(\xi\alpha+\beta)q^n+(1+\xi) \alpha \beta [n]_q \nonumber \\
  &\lambda_n=\xi\alpha\beta)^2[n]_q[n-1]_q +\xi\alpha\beta(\alpha+\beta)q^{n-1}[n]_q+\xi\alpha \beta(q^{2n-1}-q^{n-1}), \nonumber
\end{align} 
which is a $\xi$-analogue of \eqref{eq1} (see \cite{xi}). We can recover \eqref{eq2} from the more general setting of \eqref{eq3} by plugging $\alpha\rightarrow \xi\alpha$, $\epsilon_1\rightarrow \xi \alpha \beta$ and $\epsilon_2\rightarrow \alpha \beta$.
\end{Rem}
\begin{Rem}
The coefficients $[x^n]p_{n+k}(x)$ of the Al-Salam-Chihara polynomials and the coefficients $[x^n]\hat{p}_{n+k}(x)$ of the transformed Al-Salam-Chihara polynomials are connected as follows
\begin{equation*}
    [x^n]p_{n+k}(x)=\sum\limits_{i=0}^{k}\binom{n+i}{n}(\frac{q-1}{2\alpha\beta})^{k-i}([x^{n+i}]\hat{p}_{n+k}(x)),
\end{equation*}
where $\epsilon_1=\epsilon_2=\alpha \beta$,  $a=\frac{1-q-\alpha}{\alpha}$ and $b=\frac{1-q-\beta}{\beta}.$
\end{Rem}
  The  rest of the paper studies the transformed Al-Salam-Chihara polynomials from Definition \ref{definition}. We give two formulas for the coefficient $g_{n+k,n}$ of $x^n$ in $\hat{p}_{n+k}(x)$. Our two formulas represent $g_{n+k,n}$ as polynomials in $X_i=\alpha q^i+\epsilon_1 [i]_q$ and $Y_i=\beta q^i+\epsilon_2 [i]_q$ (see \eqref{eq5}) where the coefficients lie in $\mathbb{Z}[q]$. For example, by our first result (Theorem \ref{main1}) we have
\begin{align}\label{g311}
    g_{3,1}=(\sum_{0\leq i<j\leq 2}X_i X_j)+(X_0Y_1+Y_0X_2+X_1Y_2+\binom{3}{1}_qX_0Y_0)+(\sum_{0\leq i<j\leq 2}Y_i Y_j), 
\end{align}
and by our second result (Theorem \ref{main2}) we have
\begin{align}\label{g312}
    g_{3,1}=(\sum_{0\leq i<j\leq 2}X_i X_j)+(X_0Y_0+X_0Y_1+q^2 X_0Y_0+X_1Y_0+X_1Y_1+q X_1Y_1)+(\sum_{0\leq i<j\leq 2}Y_i Y_j).
\end{align}

Note that \eqref{g312} is invariant as a polynomial in $X_i$'s and $Y_i$'s under the exchange $X_i \leftrightarrow Y_i$. This is the case in general for our second formula and will be explained in Remark \ref{sym}. The first formula, however, is not invariant as a polynomial in $X_i$'s and $Y_i$'s under the exchange $X_i \leftrightarrow Y_i$ as one can see from \eqref{g311}. So far, it is not clear how these two formulas are connected.

The structure of this paper is as follows. In Section 2, we will state the main results of this paper with examples. In Section 3, we will prove our first result (Theorem \ref{main1}). In Section 4, we will prove our second result (Theorem \ref{main2}). In Section 5, we will prove Theorem \ref{2pos} which is a partial result of the conjecture regarding the minors of the matrix of coefficients $G=(g_{n,i})_{n,i}$. 

\textbf{Acknowledgments:} The author is thankful to his advisor Lauren Williams for mentorship, valuable comments, and helping me revise the draft. The author would also like to thank Sylvie Corteel for her helpful comments and explanations.  
\section{Main Results}
This section states the main results of this paper with examples. Throughout this section, we set 
\begin{equation}\label{eq5}
    X_i=\alpha q^i+\epsilon_1 [i]_q, \hspace{2mm} Y_i=\beta q^i+\epsilon_2 [i]_q
\end{equation}
for $i \geq 0$.
\subsection{The first formula for the coefficients of \texorpdfstring{$\hat{p}_n(x)$}{Lg}}
\begin{Def} \label{deff}
Define a sequence $Z_n$ for $n\geq0$ by
\begin{equation*}
Z_{n}=\begin{cases*}
X_{\floor{\frac{n}{2}}} & if $n$ is even\\
Y_{\floor{\frac{n}{2}}} & if $n$ is odd
\end{cases*}.
\end{equation*}
For a partition (weakly decreasing sequence of non-negative integers) $\mu=(\mu_1,\cdots,\mu_l)$, we define
\begin{equation*}
s_m(\mu)=\begin{cases*}
min(|\{i | \mu_i=0\}|,|\{i | \mu_i=2m+1\}|) & if $\mu_1=2m+1$\\
0 & otherwise
\end{cases*}.
\end{equation*}
Denoting $k=s_m(\mu)$, we define a weight $u_m (\mu)$ to be

$$u_m (\mu)=(\prod_{i=1}^{l-k}Z_{\mu_{l+1-i}+2(i-1)})(\binom{m+l}{k}_q Y_0\cdots Y_{k-1}).$$
\end{Def}
\begin{Ex} For $m=1$, consider a partition $\mu=(3,3,1,0)$. Then $s_1((3,3,1,0))=min(|\{i | \mu_i=0\}|=1,|\{i | \mu_i=2m+1\}|=2)=1$, so we have

$$u_1((3,3,1,0))=Z_0 Z_{1+2} Z_{3+4} \binom{5}{1}_q Y_0=X_0 Y_1 Y_3  \binom{5}{1}_q Y_0.$$
For a partition $\mu=(3,3,0,0)$, we have $s_1((3,3,0,0))=min(|\{i | \mu_i=0\}|=2,|\{i | \mu_i=2m+1\}|=2)=2$, so this gives

$$u_1((3,3,0,0)=Z_0 Z_{0+2} \binom{5}{2}_q Y_0 Y_1=X_0 X_1  \binom{5}{2}_q Y_0Y_1.$$
\end{Ex}
\begin{Th} \label{main1}
The coefficient $g_{n+k,n}$ of $x^n$ in $\hat{p}_{n+k}(x)$ is given by 
\begin{equation*}
    g_{n+k,n}=\sum_{\mu \subseteq (k) \times (2n+1)} u_n (\mu),
\end{equation*}
ie. it is the weighted sum over all Young diagrams contained in a $(k)\times(2n+1)$ rectangle, where the weight is given by Definition \ref{deff}.
\end{Th}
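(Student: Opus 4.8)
The plan is to reduce Theorem~\ref{main1} to a recurrence and prove it by induction. First I would extract from the three-term recurrence \eqref{eq3} the corresponding recurrence for the coefficients $g_{N,j}:=[x^j]\hat p_N(x)$. Taking the coefficient of $x^n$ in $\hat p_{n+k+1}(x)=(x+b_{n+k})\hat p_{n+k}(x)-\lambda_{n+k}\hat p_{n+k-1}(x)$ and reindexing gives
\[
g_{n+k,n}=g_{(n-1)+k,\,n-1}+b_{n+k-1}\,g_{n+(k-1),\,n}-\lambda_{n+k-1}\,g_{n+(k-2),\,n},
\]
together with the boundary data $g_{m,m}=1$, $g_{m,j}=0$ for $j>m$. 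Equivalently, writing $\hat p_{n+k}(x)$ as the $(1,1)$-entry of the product $\prod_{m=n+k-1}^{0}\begin{pmatrix}x+b_m&-\lambda_m\\ 1&0\end{pmatrix}$ expands $g_{n+k,n}$ as a sum over words of length $n+k$ built from ``$x$-steps'' (weight $1$), ``$b$-steps at site $m$'' (weight $b_m$) and width-two ``bumps occupying sites $m-1,m$'' (weight $-\lambda_m$), with exactly $n$ $x$-steps; in this picture the displayed recurrence is just ``delete the last step''. Since $b_m=X_m+Y_m$, it is natural to split each $b$-step into an ``$X$-step'' and a ``$Y$-step'' immediately.

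The heart of the proof is to show that $T_{k,n}:=\sum_{\mu\subseteq(k)\times(2n+1)}u_n(\mu)$ satisfies the same recurrence and boundary data, so that $g_{n+k,n}=T_{k,n}$ by induction on $n+k$. The boundary conditions are immediate: when $k=0$ only the empty partition survives, contributing $\binom{n}{0}_q=1$, while a box with a negative number of rows contains no partition. For the inductive step I would use $b_m=X_m+Y_m$ together with an expression of $\lambda_m$ in terms of the $X_i,Y_i$ and powers of $q$; exploiting $X_{j+1}=qX_j+\epsilon_1$ and $Y_{j+1}=qY_j+\epsilon_2$ one finds, for instance,
\[
\lambda_m=X_mY_{m-1}-q^{m-1}X_0(Y_0-\epsilon_2)
\]
(and the mirror identity under $X\leftrightarrow Y$). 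Then I would decompose the sum defining $T_{k,n}$ according to the largest part $\mu_1$ — with the thresholds $2n-1,2n,2n+1$ — and the multiplicity of $0$ among $\mu_1,\dots,\mu_k$ (whether $\mu_k=0$, and whether $\mu_{k-1}=0$), reassembling the pieces into $T_{k,n-1}$, $b_{n+k-1}T_{k-1,n}$ and $-\lambda_{n+k-1}T_{k-2,n}$. In the generic regime $s_n(\mu)=0$ the weight is the plain product $\prod_{i=1}^{k}Z_{c_i}$ along the $2$-separated sequence $c_i=\mu_{k+1-i}+2(i-1)\in\{0,\dots,2(n+k)-1\}$, and there the analysis mirrors the word expansion, with the negative $\lambda$-contributions exactly cancelling the products of $Z$'s that would violate $c_{i+1}\ge c_i+2$.

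The main obstacle is the behaviour of the weight on the partitions that meet the right edge of the rectangle, namely those with $\mu_1=2n+1$, where $s_n(\mu)$ can be positive and the factor $\binom{n+l}{s_n(\mu)}_q\,Y_0\cdots Y_{s_n(\mu)-1}$ enters. One must check that these $q$-binomial weights transform correctly as the rectangle shrinks to $(k-1)\times(2n+1)$, $(k-2)\times(2n+1)$ or $(k)\times(2n-1)$; this is precisely what forces the cutoff $s_m(\mu)=\min(|\{i:\mu_i=0\}|,|\{i:\mu_i=2m+1\}|)$ of Definition~\ref{deff}, and it reduces to $q$-Pascal identities such as $\binom{m}{s}_q=\binom{m-1}{s-1}_q+q^{s}\binom{m-1}{s}_q$ together with the recurrences for $X_j,Y_j$. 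It is probably cleanest to interpolate an auxiliary family indexed by the half-width $2n$, so that passing from width $2n$ to $2n+1$ adjoins a ``$Y$-layer'' and from $2n+1$ to $2n+2$ an ``$X$-layer''; this splits the delicate step into two more uniform ones. I expect this boundary $q$-binomial identity to be the real content of the argument, with the surrounding induction routine but bookkeeping-heavy.
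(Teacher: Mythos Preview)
Your plan is the paper's plan: verify by induction that $T_{k,n}:=\sum_{\mu\subseteq(k)\times(2n+1)}u_n(\mu)$ obeys the coefficient recurrence coming from \eqref{eq3}. Your rewriting $\lambda_m=X_mY_{m-1}-q^{m-1}X_0(Y_0-\epsilon_2)$ agrees with the paper's $\lambda_n=Y_{n-1}X_n-\alpha(\beta-\epsilon_2)q^{n-1}$, and you correctly locate the whole difficulty at the partitions with $\mu_1=2n+1$ and $s_n(\mu)>0$.

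Where your outline is thinner than the paper's execution is precisely there. Splitting only by $\mu_1\in\{\le 2n-1,\,2n,\,2n+1\}$ and by ``whether $\mu_k=0$, and whether $\mu_{k-1}=0$'' is not enough: the weight $u_n$ depends on the full value of $s_n(\mu)$, and the recurrence mixes all the partitions obtained from a fixed ``middle'' by trading parts equal to $2n-1$ against parts equal to $2n+1$. The paper handles this by a finer decomposition (its sets $\hat B$, $B^{\nu}$, $B^X$, $B^Y$ on one side and $C^{\nu}$, $C^X$, $C^Y$ on the other), explicit bijections between the $B$- and $C$-families, and then an identity (the paper's Lemma~\ref{219}) of the shape
\[
\sum_{i=0}^{l+1}\!\Bigl(\prod_{j=i+1}^{l+1}Y_{n-j}\Bigr)\binom{n+1}{i}_q\prod_{j<i}Y_j
=\binom{n}{l+1}_q\prod_{j\le l}Y_j+Y_n(\cdots)+(\beta-\epsilon_2)(\cdots),
\]
which is strictly more than $q$-Pascal together with $Y_{j+1}=qY_j+\epsilon_2$. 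Your proposed half-width interpolation is not what the paper does; it might streamline the bookkeeping, but it will not avoid an identity of this type, since the extra $\alpha(\beta-\epsilon_2)q^{m-1}$ piece of $\lambda_m$ is what forces the $q$-binomial correction in $u_n$ in the first place. So the proposal is on the right track and matches the paper's strategy, but the ``boundary $q$-binomial identity'' you flag is genuinely the content, and carrying it out requires grouping by the full value of $s_n(\mu)$ (and of $s_{n-1}(\mu)$) rather than just the last one or two zero parts.
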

\begin{Ex}
By Theorem \ref{main1}, we have
$$g_{k,0}=\sum_{\mu \subseteq (k) \times (1)} u_0 (\mu)=\sum_{i=0}^{k} u_0 ((1^{k-i},0^{i})).$$ 
If $k-i \leq i$, then  $$u_0 ((1^{k-i},0^{i}))=X_0\cdots X_{i-1} \binom{k}{k-i}_q Y_0\cdots Y_{k-i-1}=X_0\cdots X_{i-1} \binom{k}{i}_q Y_0\cdots Y_{k-i-1}.$$
If $k-i > i$, then
 $$u_0 ((1^{k-i},0^{i}))=X_0\cdots X_{i-1} Y_i \cdots Y_{k-i-1}\binom{k}{i}_q Y_0\cdots Y_{i-1}=X_0\cdots X_{i-1} \binom{k}{i}_q Y_0\cdots Y_{k-i-1}.$$
 In both cases, we have $u_0 ((1^{k-i},0^{i}))=X_0\cdots X_{i-1} \binom{k}{i}_q Y_0\cdots Y_{k-i-1}.$ Thus we have
 \begin{equation}\label{0}
     g_{k,0}=\sum_{i=0}^{k} \binom{k}{i}_q  X_0\cdots X_{i-1} Y_0\cdots Y_{k-i-1}.
 \end{equation}
\end{Ex}
\begin{Ex}
By Theorem \ref{main1}, we have
\begin{align*}
    g_{3,1}=&\sum_{\mu \subseteq (2) \times (3)} u_1 (\mu)\\=&u_1 ((0,0))+u_1 ((1,0))+u_1 ((2,0))+u_1 ((3,0))+u_1 ((1,1))\\&+u_1 ((2,1))+u_1 ((3,1))+u_1 ((2,2))+u_1 ((3,2))+u_1 ((3,3))\\=&X_0X_1+X_0Y_1+X_0X_2+\binom{3}{1}_qX_0Y_0+Y_0Y_1 \\&+Y_0X_2+Y_0Y_2+X_1X_2+X_1Y_2+Y_1Y_2
    \\=&(\sum_{0\leq i<j\leq 2}X_i X_j)+(X_0Y_1+Y_0X_2+X_1Y_2+\binom{3}{1}_qX_0Y_0)+(\sum_{0\leq i<j\leq 2}Y_i Y_j).
\end{align*}
\end{Ex}
\subsection{The second formula for the coefficients of \texorpdfstring{$\hat{p}_n(x)$}{Lg}}
\begin{Def}\label{S}
For a set $S$ with integer elements, we define $S(k)$ to be the $k$-th smallest element of the set $(\{0\}\cup \mathbb{N})-S$. For example, when $S=\{1,4,7\}$, we have $S(1)=0$, $S(2)=2$, $S(3)=3$ and $S(4)=5$. 
We also define $\lambda_S$ to be $(i_1,i_2-1,\cdots ,i_s-s+1)$ where  $S=\{ i_1<\cdots<i_s\}$.
\end{Def}
\begin{Def}\label{weight}
For a set $A=\{i_1<\cdots<i_a\} \subseteq \{0,\cdots,n+a-1\}$ and a set $B \subseteq \{0,\cdots,n+a+b-1\}$ with $|B|=b$, denoting $B \cap \{n+b,\cdots,n+b+a-1\}=\{n+b+a-j_k<\cdots<n+b+a-j_1\}$ and $\mu=\lambda_A=(i_1,i_2-1,\cdots ,i_a-a+1)$, we define a weight $w_n(A,B)$ to be
$$w_n(A,B)=(\prod_{i \in A} X_i)(\prod_{i \in B\cap\{0,\cdots,n+b-1\}} Y_i)(\prod_{l=1}^{k}(q^{(n+b+a-j_l)-B(\mu_{j_l}+l)}Y_{B(\mu_{j_l}+l)})) .$$
\end{Def}
Definition \ref{weight} is motivated by the bijective proof of the simple identity 
\begin{equation}\label{simple}q^{\binom{a}{2}}\binom{n+a}{a}_q q^{\binom{b}{2}}\binom{n+a+b}{b}_q=
q^{\binom{a}{2}}\binom{n+a+b}{a}_q q^{\binom{b}{2}}\binom{n+b}{b}_q
\end{equation} which will be given in Section \ref{bijection}.

\begin{Th} \label{main2}
The coefficient $g_{n+k,n}$ of $x^n$ in $\hat{p}_{n+k}(x)$ is given by
$$g_{n+k,n}=\sum_{a+b=k}(\sum_{\substack{A \subseteq \{0,\cdots,(n+a-1)\}\\|A|=a}}(\sum_{\substack{B \subseteq \{0,\cdots,(n+a+b-1)\}\\|B|=b}}w_n(A,B))),$$
ie. it is the weighted sum over all pairs $(A,B)$ such that $A \subseteq \{0,\cdots,(n+a-1)\}$ with $|A|=a$ and $B \subseteq \{0,\cdots,(n+a+b-1)\}$ with $|B|=b$, where the weight is given by Definition \ref{weight}.
\end{Th}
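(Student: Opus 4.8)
The plan is to prove Theorem \ref{main2} by induction on $k$, exactly as one proves a formula coming from a three-term recurrence: we must check that the proposed right-hand side $R_{n+k,n} := \sum_{a+b=k}\sum_{A}\sum_{B} w_n(A,B)$ satisfies the same recursion that $g_{n+k,n}$ inherits from \eqref{eq3}. Extracting the coefficient of $x^n$ from $\hat p_{n+k+1}(x) = (x+b_{n+k})\hat p_{n+k}(x) - \lambda_{n+k}\hat p_{n+k-1}(x)$ gives
\begin{equation*}
g_{n+k+1,n} = g_{n+k,n-1} + b_{n+k}\,g_{n+k,n} - \lambda_{n+k}\,g_{n+k-1,n}.
\end{equation*}
Since $b_n = X_n + Y_n$ (this is immediate from \eqref{eq5}) and since $\lambda_n$ also factors pleasantly in terms of $X_i,Y_i$ — one should first record the identity $\lambda_n = X_n Y_{n-1} + Y_n X_{n-1} - X_{n-1}Y_{n-1}$ or the like, verifying it directly from \eqref{eq3} and \eqref{eq5} — the whole recursion becomes an identity purely among the $X_i$'s, $Y_i$'s and $q$. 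So the theorem reduces to a combinatorial identity: the weighted count over pairs $(A,B)$ at level $n+k+1$ equals the alternating combination of weighted counts over pairs at levels $(n-1,k)$, $(n,k)$ (twice, with the factor $X_{n+k}+Y_{n+k}$), and $(n,k-1)$ (with the factor $-\lambda_{n+k}$).

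The key steps, in order. First, establish the algebraic identities expressing $b_n$ and $\lambda_n$ in terms of $X_i,Y_i$; this is routine but must be done carefully because the $\epsilon_1\epsilon_2[n]_q[n-1]_q$ and cross terms have to reorganize exactly. Second, interpret the recursion bijectively on the index set of pairs $(A,B)$. The natural strategy: given a pair $(A,B)$ contributing to $R_{n+k+1,n}$, look at whether the largest available slots are occupied. Splitting on whether $n+k$ (the top index, i.e. $n+a+b-1$ when $a+b=k+1$) lies in $A$ or in $B$ or in neither produces the four terms: an element $n+k$ in $A$ peels off a factor $X_{n+k}$ and drops to a pair counted by $R_{n+k,n}$; an element in $B$ peels off $Y_{n+k}$ and does likewise; and the "neither" case, together with the boundary corrections coming from how $A \subseteq \{0,\dots,n+a-1\}$ versus $\{0,\dots,n+a\}$ changes when $a$ changes, must be matched against $g_{n+k,n-1}$ and $-\lambda_{n+k}g_{n+k-1,n}$. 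Third, handle the subtle part of Definition \ref{weight}: the twisted factors $\prod_{l} q^{(n+b+a-j_l)-B(\mu_{j_l}+l)} Y_{B(\mu_{j_l}+l)}$ coming from $B \cap \{n+b,\dots,n+b+a-1\}$. These encode precisely the bijection behind \eqref{simple} promised in Section \ref{bijection}, and the induction step has to be compatible with that bijection — in effect one shows that removing the top index commutes with the $(A,B)\mapsto$ (relabeled pair) map underlying \eqref{simple}.

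I expect the main obstacle to be this third point: controlling how the weight $w_n(A,B)$ — and in particular the $q$-powers attached to the "high" part of $B$ and the reindexing $B(\mu_{j_l}+l)$ — transforms when one removes the maximal element and simultaneously shifts between the $a$-indexed and $b$-indexed ranges, so that the telescoping against $\lambda_{n+k}$ comes out with the correct signs and $q$-exponents. Concretely, the $-X_{n-1}Y_{n-1}$ piece of $\lambda_{n+k}$ (or whatever the exact correction term is) must be produced by an inclusion–exclusion on configurations where a forced coincidence of indices occurs, and checking that this cancellation is exact is where the bookkeeping is heaviest. A clean way to organize it is to prove a refined version of the recursion that tracks $a$ and $b$ separately, reducing everything to the single-step identity underlying \eqref{simple} plus the two "append a new largest index" moves; the base cases $k=0$ (where $R_{n,n}=1$, matching $g_{n,n}=1$) and the already-verified $g_{k,0}$ and $g_{3,1}$ examples serve as consistency checks. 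Remark \ref{sym}'s symmetry $X_i\leftrightarrow Y_i$ can also be used as a sanity check on the combinatorial identity, since swapping the roles of $A$ and $B$ should be visible as a symmetry of the pair-counting problem once the $q$-twists are accounted for.
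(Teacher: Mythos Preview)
Your overall strategy is the paper's: verify that the proposed right-hand side obeys the recursion coming from the three-term recurrence, with base case $g_{k,0}$ (computed in Example~\ref{ex38}). But two concrete problems prevent your outline from going through as written.

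First, the algebraic identity you claim for $\lambda_n$ is wrong. One has $\lambda_n = X_n Y_{n-1} - \alpha(\beta-\epsilon_2)q^{n-1}$, not $X_nY_{n-1}+Y_nX_{n-1}-X_{n-1}Y_{n-1}$ or any similar expression in the $X_i,Y_i$ alone. The correction term $\alpha(\beta-\epsilon_2)q^{n-1}$ genuinely involves $\alpha,\beta,\epsilon_2$ and cannot be absorbed into products of $X_i$'s and $Y_i$'s for $i\geq 0$. This is exactly why the paper introduces the dummy variable $Y_{-1}=q^{-1}(\beta-\epsilon_2)$ (Definition~\ref{wei}), so that $\lambda_n = X_nY_{n-1} - X_0\,q^n Y_{-1}$. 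Without this, your hope that ``the whole recursion becomes an identity purely among the $X_i$'s, $Y_i$'s and $q$'' fails, and with it the clean case-split bijection you describe on whether $n+k\in A$, $n+k\in B$, or neither.

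Second, even granting the correct form of $\lambda_n$, the ``subtle part'' you flag is the whole proof, and the paper does not handle it bijectively. Instead it packages the inner sum over $B$ into the generalized $q$-binomial coefficient $M_n^{\mu}(b)$ (Definition~\ref{wei}), proves two technical identities for these objects --- Lemma~\ref{ana} (an analogue of the Pascal relation \eqref{zzzz}, whose proof occupies several pages and requires tracking coefficients of individual monomials $\prod_{i\in E}Y_i$ via Proposition~\ref{Coeff}) and Lemma~\ref{t} (an analogue of \eqref{q1q1}) --- and then reduces the recursion to a refined identity (Proposition~\ref{dx}) for each fixed $\nu$. The $Y_{-1}$ terms enter essentially through Lemma~\ref{xx}, and the final manipulation passes through the intermediate compositions starting at $-1$. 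Your ``remove the top index and see what falls out'' picture is morally the $Y_{n+a+b}$ term in Lemma~\ref{ana}, but the remaining piece $M_n^{\mu-1}(b)$ shifts the partition $\mu$ rather than $B$, and matching this against $g_{n+k,n-1}$ and the $\lambda$-term requires exactly the machinery the paper builds. There is no sign-cancelling inclusion--exclusion of the kind you anticipate; the argument is algebraic throughout.
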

\begin{Ex}\label{ex38}
We compute $w_0(A,B)$ as follows. Since $n=0$ there is only one possible choice for $A$ which is $\{0,\cdots,a-1\}$, so  $\mu=\lambda_A=(0,\cdots,0)$. Suppose there are $k$ elements in a set $B \cap \{b,\cdots,b+a-1\}$ then these elements will change to elements in $(\{0,\cdots,b-1\}-B)$. So we have $w_0(A,B)=X_0\cdots X_{a-1}(q^{\sum B-\binom{b}{2}})Y_0\cdots Y_{b-1}.$ Thus we have
\begin{align*}
    g_{k,0}=&\sum_{a+b=k}(\sum_{\substack{B \subseteq \{0,\cdots,a+b-1\}\\|B|=b}}X_0\cdots X_{a-1}(q^{\sum B-\binom{b}{2}})Y_0\cdots Y_{b-1}) \\=&
    \sum_{a+b=k}\binom{a+b}{b}_q X_0\cdots X_{a-1}Y_0\cdots Y_{b-1}.
\end{align*}
In this case the formula is identical to \eqref{0}.
\end{Ex}
\begin{Ex} By Theorem \ref{main2}, we have
\begin{align*}
    g_{3,1}=&\sum_{\substack{B\subseteq\{0,1,2\}\\|B|=2}}w_1(\phi,B)+\sum_{\substack{A\subseteq\{0,1\}\\|A|=1}}(\sum_{\substack{B\subseteq\{0,1,2\}\\|B|=1}}w_1(A,B))+\sum_{\substack{A\subseteq\{0,1,2\}\\|A|=2}}w_1(A,\phi)\\=&(\sum_{0\leq i<j\leq 2}X_i X_j)+(X_0Y_0+X_0Y_1+q^2 X_0Y_0+X_1Y_0+X_1Y_1+q X_1Y_1)+(\sum_{0\leq i<j\leq 2}Y_i Y_j).
\end{align*}
\end{Ex}

On the way to prove Theorem \ref{main2}, we introduce the following, which extends $q$-binomial coefficient. 
\begin{Def}\label{wei}
For a weakly increasing composition $\mu=(\mu_1,\cdots,\mu_a)$ such that $-1\leq\mu_1,\cdots,\mu_a\leq n$ and a set $B \subseteq \{0,\cdots,n+a+b-1\}$ with $|B|=b$, denoting $B \cap \{n+b,\cdots,n+b+a-1\}=\{n+b+a-j_k<\cdots<n+b+a-j_1\}$, we define  a weight $m^{\mu}_n(B)$ to be
$$m^{\mu}_n(B)=(\prod_{i \in B\cap\{0,\cdots,n+b-1\}} Y_i)(\prod_{l=1}^{k}(q^{(n+b+a-j_l)-B(\mu_{j_l}+l)}Y_{B(\mu_{j_l}+l)}) ,$$
where we define $B(0)=-1$ and $Y_{-1}=q^{-1}(\beta-\epsilon_2)$. We also define a \textit{generalized q-binomial coefficient} $M^{\mu}_n(b)$ to be
$$M^{\mu}_n(b)=\sum_{\substack{B \subseteq \{0,\cdots,n+a+b-1\}\\|B|=b}} m^{\mu}_n(B).$$
\end{Def}
We named $M^{\mu}_n(b)$ a generalized $q$-binomial coefficient because  when $\epsilon_2=0$, we have $M^{\mu}_n(b)=q^{\binom{b}{2}}\binom{n+a+b}{b}_q (\beta)^b$, where $a$ is a number of components in $\mu=(\mu_1,\cdots,\mu_a)$. Note that $q$-binomial coefficients have the following well known identities
\begin{equation}\label{zzzz}\binom{n+a+b+1}{b}_q=q^{n+a+1}\binom{n+a+b}{b-1}_q+\binom{n+a+b}{b}_q\end{equation}
\begin{equation}\label{q1q1}
    [n+a+b+1]_q\binom{n+a+b}{b}_q=[n+a+1]_q\binom{n+a+b+1}{b}_q.
\end{equation}
We will give a generalization of \eqref{zzzz} in Lemma \ref{ana} and a generalization of \eqref{q1q1} in Lemma \ref{t}. These two lemmas will be key ingredients for the proof of Theorem \ref{main2}.
\subsection{Positivity of minors of the matrix of coefficients}
Motivated by \cite{CW} (Conjecture 4.4), we make the following conjecture.
\begin{Conj}\label{conj}
Let $G=(g_{n,i})_{n,i}$ be the infinite array of coefficients $g_{n,i}=[x^i]\hat{p}_n(x)$ where $n,i \in \mathbb{Z}_{\geq0}$ and $g_{n,i}=0$ if $i>n$. Then the (non-vanishing) minors of $G$ are polynomials with positive coefficients. 
\end{Conj}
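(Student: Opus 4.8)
The plan is to derive the conjecture from the combinatorial formulas already established (Theorem~\ref{main1} and Theorem~\ref{main2}) by realizing the array $G$ as the path matrix of a planar acyclic network with positive edge weights, and then invoking the Lindström--Gessel--Viennot (LGV) lemma, which turns every minor into a manifestly positive sum over families of non-intersecting lattice paths.

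\textbf{Step 1 (a multiplicative path model for each entry).} Both of our formulas express $g_{n+k,n}$ as a sum over combinatorial objects (Young diagrams in a $(k)\times(2n+1)$ rectangle; pairs $(A,B)$ of subsets) that are in bijection with monotone lattice paths, carrying a weight that is \emph{almost} multiplicative along the path. In Theorem~\ref{main1} the factor $\prod_i Z_{\mu_{l+1-i}+2(i-1)}$ is already a product of edge weights (each step reads off one $X_j$ or $Y_j$, the shift $2(i-1)$ being the running coordinate), but the resonant correction $\binom{m+l}{k}_q Y_0\cdots Y_{k-1}$ is not. Since $\binom{m+l}{k}_q$ is itself the generating function for lattice paths in a $k\times(m+l-k)$ box, I would expand it and splice these auxiliary paths onto the resonant diagrams, enlarging the grid so that afterwards $g_{n+k,n}=\sum_{P\colon A_{n+k}\to B_n}\mathrm{wt}(P)$ with $\mathrm{wt}$ fully multiplicative and every edge weight a single monomial in $X_i,Y_i,q$ with coefficient $1$. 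The analogous unfolding for Theorem~\ref{main2} uses the bijective proof of \eqref{simple} to reinterpret the exponents $q^{(n+b+a-j_l)-B(\mu_{j_l}+l)}$ as edge weights on a two-layer network (one layer recording $A$, one recording $B$).

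\textbf{Step 2 (assemble one network and apply LGV).} The subtle point is that all rows of $G$ must be computed on a \emph{single} planar network $\Gamma$, with sources $A_0,A_1,\dots$ and sinks $B_0,B_1,\dots$ placed so that their linear orders are non-crossing and so that $A_n$ reaches $B_i$ only for $i\le n$ (reflecting $g_{n,i}=0$ for $i>n$). Granting such a $\Gamma$, the LGV lemma gives, for rows $n_1<\cdots<n_r$ and columns $i_1<\cdots<i_r$,
$$\det\big(g_{n_s,i_t}\big)_{1\le s,t\le r}=\sum_{(P_1,\dots,P_r)}\ \prod_{s=1}^{r}\mathrm{wt}(P_s),$$
the sum over vertex-disjoint families with $P_s\colon A_{n_s}\to B_{i_s}$; every summand is a product of monomials with coefficient $1$, so the minor is a polynomial in $\alpha,\beta,\epsilon_1,\epsilon_2,q$ with non-negative coefficients, hence positive whenever it is not identically zero. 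This would prove Conjecture~\ref{conj}.

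\textbf{Main obstacle and fallback.} The hard part is Step 2: producing a planar gadget for each fixed pair $(n+k,n)$ is routine, but gluing all of them into one planar $\Gamma$ with globally consistent source/sink orders — and with the paths from $A_n$ genuinely avoiding every sink $B_i$ with $i>n$ — is delicate, and is exactly the point at which the argument may fail. If the global construction resists, I would settle for the partial result (Theorem~\ref{2pos}): total nonnegativity of the $2\times 2$ minors, and more generally of the contiguous minors, proved directly by a weight-preserving injection from the monomials of $g_{n,j}g_{m,i}$ into those of $g_{n,i}g_{m,j}$ for $n<m$ and $i<j$, obtained by a local uncrossing of two paths in the model of Theorem~\ref{main1} or Theorem~\ref{main2}.
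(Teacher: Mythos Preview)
The statement is a \emph{conjecture}, and the paper does not prove it in general; it establishes only the special cases in Proposition~\ref{conj}+1 (when $\alpha=0$, $\alpha=\epsilon_1$, or $\epsilon_1=0$, and their symmetric variants) and the restricted $2\times 2$ case of Theorem~\ref{2pos}. Your LGV strategy is exactly what the paper uses for those special cases (see Remarks~\ref{pos1} and~\ref{pos2}): in each of them one of the two ``corrections'' vanishes --- the $q$-binomial factor $\binom{m+l}{k}_q Y_0\cdots Y_{k-1}$ in $u_m(\mu)$, or the shift $q^{(n+b+a-j_l)-B(\mu_{j_l}+l)}$ in $w_n(A,B)$ --- and the weight becomes a genuine product of edge weights on a single planar network, so LGV applies directly.

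The gap in your proposal is precisely Step~1. In the general case neither weight is multiplicative along a path: the $q$-binomial in $u_m(\mu)$ depends on the \emph{global} quantity $s_m(\mu)$ (the minimum of two multiplicities at opposite ends of the rectangle), and the exponent in $w_n(A,B)$ depends on the interaction between $A$ and $B$ through $B(\mu_{j_l}+l)$. Your suggestion to ``expand the $q$-binomial and splice auxiliary paths'' is the entire difficulty, not a routine step; no one has produced a planar network with nonnegative edge weights whose path matrix is $G$ for generic $\alpha,\beta,\epsilon_1,\epsilon_2,q$, and the paper does not claim one exists. Your fallback is also off: Theorem~\ref{2pos} covers only those $2\times 2$ minors having $g_{n,n}=1$ in the lower-left corner, and the paper proves it not by an uncrossing injection but by algebraic inequalities on the generalized $q$-binomials $M_n^\mu(b)$ (Lemmas~\ref{45} and~\ref{22}, culminating in \eqref{kk}). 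A direct path-uncrossing argument for arbitrary $2\times 2$ minors would already go beyond what the paper achieves.
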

Specializing $\alpha \rightarrow \xi \alpha$, $\epsilon_1 \rightarrow \xi \alpha \beta$ and $\epsilon_2 \rightarrow \alpha \beta$, Conjecture \ref{conj} recovers the positivity conjecture for Koornwinder moments when $\gamma=\delta=0$ (see \cite{CW}, Conjecture 4.4).
\begin{Prop}
Conjecture \ref{conj} is true for the following cases.

(1) $\alpha=0$ (or $\beta=0$)

(2) $\alpha=\epsilon_1$ (or $\beta=\epsilon_2$)

(3) $\epsilon_1=0$ (or $\epsilon_2=0$)
\end{Prop}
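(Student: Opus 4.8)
The plan is to show, in each of the three specializations, that the coefficient matrix $G$ admits a \emph{subtraction-free} realization — either as a product of totally positive matrices, or via an acyclic planar network with nonnegative edge weights — after which the Cauchy--Binet formula and the Lindström--Gessel--Viennot lemma force every non-vanishing minor of $G$ to be a polynomial with nonnegative coefficients, which is exactly what Conjecture~\ref{conj} asserts. The reason this cannot be done directly for general parameters is that the standard expansion of $\hat p_n(x)=\det(xI+T_n)$ ($T_n$ the truncated tridiagonal recurrence matrix of \eqref{eq3}) is a \emph{signed} sum of Motzkin/Łukasiewicz path weights, the signs coming from the $-\lambda_n$ in \eqref{eq3}; the role of Theorems~\ref{main1} and~\ref{main2} is precisely to repackage $g_{n+k,n}$ subtraction-freely. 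Two elementary $q$-number identities are used throughout: $b_n=X_n+Y_n$, and $\lambda_n=X_{n-1}Y_n+\beta q^{n-1}(\epsilon_1-\alpha)=X_nY_{n-1}+\alpha q^{n-1}(\epsilon_2-\beta)$; combined with the symmetry of \eqref{eq3} under $(\alpha,\epsilon_1)\leftrightarrow(\beta,\epsilon_2)$ (equivalently $X_i\leftrightarrow Y_i$, cf.\ Remark~\ref{sym}), this lets us treat only one of the two parenthetical alternatives in each part.

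For cases (1) and (2) the second identity degenerates: $\alpha=0$ gives $\lambda_n=X_nY_{n-1}$ with $X_n=\epsilon_1[n]_q$, and $\beta=\epsilon_2$ gives $\lambda_n=X_nY_{n-1}$; the alternatives $\beta=0$ and $\alpha=\epsilon_1$ give $\lambda_n=X_{n-1}Y_n$, which is the same polynomial family after relabelling $X_i\leftrightarrow Y_i$. Both cases thus reduce to one claim: the coefficient matrix of the monic orthogonal polynomials defined by $b_n=X_n+Y_n$, $\lambda_n=X_nY_{n-1}$ is totally positive over the semiring of polynomials with nonnegative coefficients. I would prove this by specializing Theorem~\ref{main1} (or~\ref{main2}) to this situation; I expect the weights $u_n(\mu)$ to simplify enough that $g_{n+k,n}$ becomes the generating function, over lattice paths in an explicit finite acyclic planar digraph $\Gamma$ with nonnegative weights (the $q$-binomial factors $\binom{n+l}{t}_q$ being themselves path generating functions of a small ``staircase'' gadget, as in the bijective proof of \eqref{simple} in Section~\ref{bijection}), of the paths from the $m$-th source to the $i$-th sink carrying total weight $g_{m,i}$. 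Lindström--Gessel--Viennot then identifies each minor of $G$ with the generating function of families of non-crossing paths in $\Gamma$, proving the claim, and specializing $X_i,Y_i$ to the actual polynomials transfers positivity to $\mathbb{Z}_{\ge 0}[\alpha,\beta,\epsilon_1,\epsilon_2,q]$. The construction of $\Gamma$ — in particular arranging the sources and sinks so that the network is genuinely planar, with no spurious crossings — is the step I expect to be the main obstacle.

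Case (3) is different, since here $\lambda_n=\alpha q^{n-1}[n]_q(\epsilon_2-\beta(1-q))$ is not a polynomial with nonnegative coefficients; I would argue from the closed form instead. By the $(\alpha,\epsilon_1)\leftrightarrow(\beta,\epsilon_2)$ symmetry it suffices to treat $\epsilon_2=0$, so $Y_i=\beta q^i$ for all $i$. A short computation then shows the weight of Definition~\ref{weight} collapses: the factor $\prod_{l}q^{(n+b+a-j_l)-B(\mu_{j_l}+l)}Y_{B(\mu_{j_l}+l)}$ equals $\beta^{|B\cap\{n+b,\dots,n+b+a-1\}|}q^{\sigma(B\cap\{n+b,\dots,n+b+a-1\})}$ (the $q$-power of each relocated $Y$ recombining with its prefactor), and multiplying by $\prod_{i\in B\cap\{0,\dots,n+b-1\}}Y_i$ gives $w_n(A,B)=\bigl(\prod_{i\in A}X_i\bigr)\beta^{b}q^{\sigma(B)}$, where $\sigma(S)=\sum_{i\in S}i$. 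Summing over $B$ with $\sum_{|B|=b,\,B\subseteq\{0,\dots,n+a+b-1\}}q^{\sigma(B)}=q^{\binom b2}\binom{n+a+b}{b}_q$, Theorem~\ref{main2} yields
\begin{equation*}
g_{n+k,n}=\sum_{a+b=k}e_a(X_0,\dots,X_{n+a-1})\,\beta^{b}q^{\binom b2}\binom{n+k}{b}_q,
\end{equation*}
$e_a$ being the elementary symmetric polynomial. Writing $N=n+k$, $i=n$ and $m=i+a$, this reads $g_{N,i}=\sum_m P_{N,m}Q_{m,i}$, i.e.\ $G=PQ$, where $Q_{m,i}=e_{m-i}(X_0,\dots,X_{m-1})=[x^i]\prod_{j=0}^{m-1}(x+X_j)$ and $P_{N,m}=\beta^{N-m}q^{\binom{N-m}{2}}\binom{N}{N-m}_q=[x^m]\prod_{j=0}^{N-1}(x+\beta q^j)$. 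Both $P$ and $Q$ are specializations of the flagged elementary-symmetric matrix $\bigl(e_{r-c}(t_1,\dots,t_r)\bigr)_{r,c}$ — at $t_{j+1}=\beta q^j$ and $t_{j+1}=X_j$ respectively — which is classically totally positive, with minors equal to flagged skew Schur polynomials (equivalently, generating functions of non-crossing lattice-path families); since $\beta q^j$ and $X_j$ have nonnegative coefficients, the minors of $P$ and $Q$ are polynomials with nonnegative coefficients, and Cauchy--Binet finishes case (3).

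A final remark on the ``non-vanishing'' clause: in all three cases the argument shows that \emph{every} minor of $G$ is a polynomial with nonnegative coefficients — some being identically zero (the vanishing minors) — so a fortiori every non-vanishing minor is a polynomial with positive coefficients, as required. The overall shape of the proof is thus uniform (subtraction-free realization $+$ Cauchy--Binet/LGV), with the genuinely new work concentrated in producing the planar network $\Gamma$ for cases (1) and (2).
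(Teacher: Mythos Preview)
Your overall strategy—reduce to a subtraction-free/path-counting realization and invoke Lindstr\"om--Gessel--Viennot (or Cauchy--Binet)—is exactly the paper's, and your algebraic reductions ($\lambda_n=X_nY_{n-1}$ in cases (1)--(2), $w_n(A,B)=(\prod_{i\in A}X_i)(\prod_{i\in B}Y_i)$ in case (3)) are correct. But you have overlooked that the paper already contains the explicit planar networks you need, so what you flag as ``the main obstacle'' is a non-issue.

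For cases (1) and (2), once $\lambda_n=X_nY_{n-1}$ the transformed polynomials $\hat p_n$ coincide with the auxiliary family $\tilde p_n$ of Proposition~\ref{prop}; this is the content of Remark~\ref{mod}, which shows $u_m(\mu)=w(\mu)$ exactly when $\alpha=0$ or $\beta=\epsilon_2$. The network $\Gamma$ is then the one described in Remark~\ref{pos1} and drawn in Figure~\ref{graph1}, with sources $u_i=(-2i-1,i)$, sinks $v_i=(0,i)$, and edge weights $Z_{i+2j+1}\in\{X_r,Y_r\}$. It is manifestly planar with nonnegative-polynomial weights; there is nothing left to construct, and no $q$-binomial ``gadget'' is needed.

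For case (3), your computation is precisely Remark~\ref{pos2}, and again the paper supplies a single planar network (Figure~\ref{graph2}). Your alternative of factoring $G=PQ$ with $P$, $Q$ flagged elementary-symmetric matrices and applying Cauchy--Binet is a valid variant—and arguably makes the total positivity more transparent—but it is not needed: the two blocks of vertical edges in Figure~\ref{graph2} are exactly the networks realizing $Q$ and $P$, concatenated.

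In short, the paper's proof simply cites Remarks~\ref{pos1} and~\ref{pos2} together with LGV; your proposal rediscovers this route but leaves as an open step the very constructions those remarks already carry out.
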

\begin{proof}
   By Remark \ref{pos1} and Remark \ref{pos2} together with the Lindstr\"{o}m-Gessel-Viennot lemma (see \cite{GV}) proves the proposition.
\end{proof}
For polynomials $f_1$ and $f_2$ we will write $f_1\succeq f_2$ if $(f_1-f_2)$ is a polynomial with positive coefficients. The following theorem shows that 2 by 2 minors of $G=(g_{n,i})_{n,i}$ having $g_{n,n}=1$ as a lower left entry are polynomials with positive coefficients. The proof will use Theorem \ref{main2}.
\begin{Th} \label{2pos}
For non-negative integers $n,a$ and $b$, we have
\begin{equation*}
    g_{n+a+b,n+a}g_{n+a,n}\succeq g_{n+a+b,n}.
\end{equation*}
\end{Th}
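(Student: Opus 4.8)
The plan is to prove, for all non-negative integers $a,b$, the assertion $P(a,b)$: \emph{for every $n\ge 0$}, $g_{n+a+b,n+a}\,g_{n+a,n}\succeq g_{n+a+b,n}$, by first reducing to the case $b=1$ and then treating that case with Theorem \ref{main2}. Since $g_{m,m}=1$, the assertions $P(a,0)$ and $P(0,b)$ are trivial; and by Theorems \ref{main1} and \ref{main2} every $g_{m,n}$ is a polynomial with non-negative coefficients, so $\succeq$ is transitive and is preserved under multiplication by any $g_{m,n}$. Assuming $P(\,\cdot\,,1)$ for all arguments, I would deduce $P(a,b)$ for $b\ge 2$ by induction on $b$ from the chain
\begin{align*}
g_{n+a+b,n}
&\preceq g_{n+a+b,\,n+a+b-1}\,g_{n+a+b-1,n}\\
&\preceq g_{n+a+b,\,n+a+b-1}\,g_{n+a+b-1,\,n+a}\,g_{n+a,n}\\
&\preceq g_{n+a+b,\,n+a}\,g_{n+a,n},
\end{align*}
where the three steps apply $P(a+b-1,1)$, then $P(a,b-1)$ (multiplied through by $g_{n+a+b,\,n+a+b-1}\succeq 0$), then $P(b-1,1)$ with base point $n+a$ (multiplied through by $g_{n+a,n}\succeq 0$). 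So everything reduces to proving $P(a,1)$.

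For the case $b=1$: from \eqref{eq3} one has $[x^m]\hat p_{m+1}(x)=b_m+[x^{m-1}]\hat p_m(x)$, hence $g_{n+a+1,n+a}=\sum_{i=0}^{n+a}(X_i+Y_i)$ (this also follows from Theorem \ref{main2}). Summing the weights $w_n(A,B)$ of Definition \ref{weight} over $B$ by means of the generalized $q$-binomials of Definition \ref{wei}, Theorem \ref{main2} reads $g_{n+k,n}=\sum_{a'+b'=k}\sum_{A}\bigl(\prod_{i\in A}X_i\bigr)M^{\lambda_A}_n(b')$, the inner sum over $a'$-subsets $A\subseteq\{0,\dots,n+a'-1\}$. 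Grouping by total degree in the variables $X_i$: the degree-$a'$ part of $g_{n+a+1,n}$ is $\sum_{A}\bigl(\prod_{i\in A}X_i\bigr)M^{\lambda_A}_n(a+1-a')$, and for $a'\le a$ the degree-$a'$ part of $g_{n+a,n}$ is the analogous sum with $M^{\lambda_A}_n(a-a')$ over the \emph{same} index set of $A$'s. Hence the top piece $\sum_{A\subseteq\{0,\dots,n+a\},\,|A|=a+1}\prod_{i\in A}X_i$ of $g_{n+a+1,n}$ sits inside $\bigl(\sum_{i=0}^{n+a}X_i\bigr)g_{n+a,n}$ via the injection $A\mapsto(\max A,\,A\setminus\{\max A\})$, and it remains to show that for every $a'\le a$ and every such $A$,
\begin{equation*}
M^{\lambda_A}_n(a+1-a')\ \preceq\ \left(\sum_{i=0}^{n+a}Y_i\right)M^{\lambda_A}_n(a-a');
\end{equation*}
granting this, summing over degrees gives $g_{n+a+1,n}\preceq\bigl(\sum_{i=0}^{n+a}Y_i\bigr)g_{n+a,n}+\bigl(\sum_{i=0}^{n+a}X_i\bigr)g_{n+a,n}=g_{n+a+1,n+a}\,g_{n+a,n}$, the surplus terms being manifestly non-negative. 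Writing $b=a-a'$ and letting $\mu$ be any partition with $a'$ parts in $\{0,\dots,n\}$, the inequality to prove is
\begin{equation*}
M^{\mu}_n(b+1)\ \preceq\ \left(\sum_{i=0}^{\,n+a'+b}Y_i\right)M^{\mu}_n(b).
\end{equation*}

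I expect this inequality to be the main obstacle, and would prove it by induction on $b$. When $\mu$ is empty, both sides are expressible through elementary symmetric functions of the $Y_i$ and the bound is a routine symmetric-function computation. When $\mu$ has parts, the base case $b=0$ asserts $M^{\mu}_n(1)\preceq\sum_{i=0}^{n+a'}Y_i$: the non-collision singletons $B=\{j\}$ with $j\le n$ contribute $Y_j$, while a collision singleton $B=\{j\}$ contributes $q^{\,j-\mu'}Y_{\mu'}$ for some part $\mu'\le j$ of $\mu$, and $q^{\,j-\mu'}Y_{\mu'}\preceq Y_j$ because $Y_j=q^{\,j-\mu'}Y_{\mu'}+\epsilon_2[\,j-\mu'\,]_q$. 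For the inductive step I would feed in the recursions satisfied by the generalized $q$-binomials $M^{\mu}_n(b)$ — the $\epsilon$-deformation of the $q$-Pascal rule \eqref{zzzz} and of \eqref{q1q1} established in Lemmas \ref{ana} and \ref{t}, which are exactly the tools used to prove Theorem \ref{main2}. The genuine difficulty is that the $q$-powers produced by collision resolution make the inequality \emph{false} at the level of polynomials in the $X_i,Y_i$ with non-negative $q$-coefficients; one must repeatedly invoke $\epsilon_2=Y_{i+1}-qY_i$ to rewrite collision terms, so the induction has to transport the precise recursions for $M^{\mu}_n(b)$ rather than any crude monomial bound. As a consistency check, carrying out the case $a=1$ by hand gives
\begin{align*}
g_{n+2,n+1}\,g_{n+1,n}-g_{n+2,n}
&=Y_{n+1}\sum_{i=0}^{n}X_i+\sum_{i=0}^{n}X_i^2+\sum_{0\le i<j\le n}X_iX_j+\Bigl(\sum_{i=0}^{n}X_i\Bigr)\Bigl(\sum_{i=0}^{n}Y_i\Bigr)\\
&\quad+\sum_{i=0}^{n}Y_i^2+\sum_{0\le i<j\le n}Y_iY_j+\epsilon_1\sum_{i=0}^{n}[\,n+1-i\,]_q\,Y_i,
\end{align*}
which is visibly a polynomial with non-negative coefficients.
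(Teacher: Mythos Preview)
Your reduction to $b=1$ is wrong. The third step of your chain,
\[
g_{n+a+b,\,n+a+b-1}\,g_{n+a+b-1,\,n+a}\,g_{n+a,n}\ \preceq\ g_{n+a+b,\,n+a}\,g_{n+a,n},
\]
would require $g_{n+a+b,\,n+a+b-1}\,g_{n+a+b-1,\,n+a}\preceq g_{n+a+b,\,n+a}$, whereas the statement $P(b-1,1)$ you invoke gives the \emph{reverse} inequality. Every instance of $P(\cdot,\cdot)$ is of the form ``product $\succeq$ single coefficient'', so once your chain has unfolded to a three-factor product there is no way to fold it back down to two factors. There is no repair along these lines, and the paper accordingly treats general $a,b$ directly.

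Your handling of the $b=1$ case is correct up to and including the identification of the key inequality $M^{\mu}_n(b+1)\preceq\bigl(\sum_{i=0}^{n+a'+b}Y_i\bigr)M^{\mu}_n(b)$; this is exactly the $k=1$ case of the paper's bound $M^{\mu}_n(b)\preceq M^{\mu}_n(b-k)\,M_{n+l+b-k}(k)$. But your proposed proof of that inequality by induction on $b$ does not close: Lemma~\ref{ana} rewrites $M^{\mu}_n(b+1)$ as $Y_{n+a'+b}\,M^{\mu}_n(b)+M^{\mu-1}_{n-1}(b+1)$, and the second summand is still at level $b+1$, so an inductive hypothesis on $b$ alone is useless. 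Lemma~\ref{t} is not the missing ingredient. What one actually needs is the termwise comparison $M^{\mu-1}_{n-1}(b)\preceq M^{(\mu_2,\dots,\mu_l)}_n(b)$ (the paper's Lemma~\ref{45}, proved from $q^{\,j-i}Y_i\preceq Y_j$ for $i\le j$); combined with Lemma~\ref{ana} this lets one peel off parts of $\mu$ until only the elementary-symmetric $M_n(b)$ remains, where the bound is immediate. The paper uses Lemma~\ref{45} twice: once to derive the general bound on $M^{\mu}_n(b)$, and once more directly, splitting $g_{n+a+b,n}$ by $X$-degree $c$ and matching each piece against a piece of $g_{n+a+b,n+a}\,g_{n+a,n}$ (Lemma~\ref{45} when $c\ge a$, the bound on $M^{\mu}_n$ when $c<a$).
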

\begin{Ex}
We have 
\begin{align*}
    g_{4,2}g_{2,1}-g_{4,1}=&\alpha^2 \beta + \alpha \beta^2 + 2 \alpha^2 \epsilon_1 + 
 2 \alpha \beta \epsilon_1 + \beta^2 \epsilon_1 + \beta \epsilon_1^2 + 
 \alpha^2 \epsilon_2 + 2 \alpha \beta \epsilon_2 + 2 \beta^2 \epsilon_2 + 
 \alpha \epsilon_1 \epsilon_2 \\&+ \beta \epsilon_1 \epsilon_2 + \alpha \epsilon_2^2 +
  \alpha^3 q + 2 \alpha^2 \beta q + 2 \alpha \beta^2 q + \beta^3 q + 
 \alpha^2 \epsilon_1 q + 2 \alpha \beta \epsilon_1 q + \beta^2 \epsilon_1 q \\&+ 
 \beta \epsilon_1^2 q + \alpha^2 \epsilon_2 q + 2 \alpha \beta \epsilon_2 q + 
 \beta^2 \epsilon_2 q + \alpha \epsilon_1 \epsilon_2 q + 
 \beta \epsilon_1 \epsilon_2 q + \alpha \epsilon_2^2 q + \alpha^3 q^2 + 
 2 \alpha^2 \beta q^2 \\&+ 2 \alpha \beta^2 q^2 + \beta^3 q^2 + 
 2 \alpha \beta \epsilon_1 q^2 + \beta^2 \epsilon_1 q^2 + 
 \alpha^2 \epsilon_2 q^2 + 2 \alpha \beta \epsilon_2 q^2 + \alpha^2 \beta q^3 +
  \alpha \beta^2 q^3.
\end{align*}
We conclude $g_{4,2}g_{2,1}\succeq g_{4,1}$.
\end{Ex}

\section{Proof of Theorem \ref{main1}}
Our goal in this section is to prove Theorem \ref{main1}, which gives a combinatorial formula for the coefficients of the transformed Al-Salam-Chihara polynomials as a weighted sum over Young diagrams contained in a rectangle. One step along the way is to prove Proposition \ref{prop}, which gives an analogous result in a simpler setting. 
\subsection{A motivating result}
Consider the family of orthogonal polynomials $(\tilde{p}_n (x))_{n\geq 0}$ in one variable $x$, defined by the following three-term recurrence relation
\begin{align*}
&\tilde{p}_{n+1}(x)=(x+\tilde{b}_n)\tilde{p}_n(x)-\tilde{\lambda}_{n}\tilde{p}_{n-1}(x)\\&\tilde{b}_n=X_n+Y_n\\ &\tilde{\lambda}_n=Y_{n-1} X_{n},
\end{align*}
where $X_i$'s and $Y_i$'s are indeterminates.

\begin{Def} \label{defw}
For a partition $\mu=(\mu_1,\mu_2,\cdots,\mu_l)$, we define a weight $w(\mu)$ to be $$w(\mu)=\prod\limits_{i=1}^{l} Z_{\mu_{l+1-i}+2(i-1)}=Z_{\mu_l}Z_{\mu_{l-1}+2}Z_{\mu_{l-2}+4}\cdots Z_{\mu_1+2(l-1)},$$
where $Z_n$ is given by Definition \ref{deff}.
\end{Def}

\begin{Prop} \label{prop} The coefficient $\tilde{g}_{n+k,n}$ of $x^n$ in $\tilde{p}_{n+k}(x)$ is given by 
$$\tilde{g}_{n+k,n}=\sum_{\mu \subseteq (k) \times (2n+1)} w(\mu),$$
ie. it is the weighted sum over all Young diagrams contained in a $(k)\times(2n+1)$ rectangle, where the weight is given by Definition \ref{defw}.
\end{Prop}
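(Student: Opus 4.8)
The natural approach is induction on $k$, keeping $n$ as a parameter, using the three-term recurrence for $\tilde{p}_n(x)$ to relate the coefficient $\tilde{g}_{n+k,n}$ to coefficients of lower-degree polynomials. Extracting $[x^n]$ from $\tilde{p}_{n+k}(x)=(x+\tilde b_{n+k-1})\tilde p_{n+k-1}(x)-\tilde\lambda_{n+k-1}\tilde p_{n+k-2}(x)$ gives
\begin{equation*}
\tilde g_{n+k,n}=\tilde g_{n+k-1,n-1}+(X_{n+k-1}+Y_{n+k-1})\,\tilde g_{n+k-1,n}-Y_{n+k-2}X_{n+k-1}\,\tilde g_{n+k-2,n}.
\end{equation*}
So the plan is to show that the right-hand side of the proposed formula, call it $R_{n+k,n}=\sum_{\mu\subseteq(k)\times(2n+1)} w(\mu)$, satisfies the same recurrence and the same initial conditions ($R_{n,n}=1$, since the only diagram is empty and $w(\emptyset)=1$; and $R_{m,n}=0$ when $m<n$, interpreting the empty sum appropriately). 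The initial/boundary cases and the base $k=0$ (where the rectangle is $(0)\times(2n+1)$, forcing $\mu=\emptyset$) should be quick.

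The core of the argument is a combinatorial decomposition of the set of Young diagrams $\mu\subseteq(k)\times(2n+1)$ according to the behavior of the first column (of length $l=\ell(\mu)\le k$) and the last part. I would split the sum over $\mu\subseteq(k)\times(2n+1)$ by whether $\mu_1=2n+1$ (first row full width) or $\mu_1\le 2n$, and correspondingly track how removing a row or a column transforms $w(\mu)$. Concretely: the map $\mu\mapsto(\mu_2,\dots,\mu_l)$ (delete the first/longest row) relates diagrams in $(k)\times(2n+1)$ to diagrams in $(k-1)\times(2n+1)$, and because $w$ is built from $Z_{\mu_{l+1-i}+2(i-1)}$ with the shift $2(i-1)$ tied to position from the bottom, deleting the top row multiplies/divides the weight by a single factor $Z_{\mu_1+2(l-1)}$; I need to check that this factor correctly assembles into the $(X_{n+k-1}+Y_{n+k-1})$ and $Y_{n+k-2}X_{n+k-1}$ terms after matching parities of $\mu_1+2(l-1)$ against the index $n+k-1$. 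The identity $\tilde\lambda_{n+k-1}=Y_{n+k-2}X_{n+k-1}$ is exactly what one expects to see when a diagram with $\mu_1=2n+1$ and $\ell(\mu)=k$ (the "maximal" corner) is peeled off in two steps, so the $-\tilde\lambda$ term should correspond to an inclusion–exclusion correction for diagrams touching both the top edge and the right edge of the rectangle.

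I expect the main obstacle to be bookkeeping the index shifts correctly: the weight $w(\mu)=\prod_i Z_{\mu_{l+1-i}+2(i-1)}$ reads the parts from the bottom with an increasing even offset, so adding or removing a row changes \emph{all} offsets, not just one, and one must argue that the net effect telescopes to a single $Z$-factor with the right index. Handling the case distinction $\mu_1=2n+1$ versus $\mu_1<2n+1$ cleanly — and making sure the rectangle bound $2n+1$ interacts correctly with the recurrence's shift from $n$ to $n-1$ in the first term $\tilde g_{n+k-1,n-1}$ (where the rectangle becomes $(k)\times(2n-1)$) — is where the delicate matching lies. Once the weight-preserving/weight-shifting bijections are set up and the parity matching of $Z$-indices with $X$ versus $Y$ is verified, the recurrence check is a finite bookkeeping computation and the proposition follows by induction.
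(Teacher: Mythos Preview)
Your approach is essentially the paper's: extract the coefficient recurrence from the three-term relation and verify it for $R_{n+k,n}$ by splitting the sum over $\mu$ according to the value of $\mu_1$ and peeling off the top row. Two small clarifications: because the weight is indexed from the \emph{bottom}, deleting the top row removes exactly the single factor $Z_{\mu_1+2(k-1)}$ and leaves all other offsets unchanged, so the ``telescoping obstacle'' you anticipate does not arise; and the induction should be on the degree $n+k$ rather than on $k$ with $n$ fixed, since $\tilde g_{n+k-1,n-1}$ has the same $k$-value as $\tilde g_{n+k,n}$ (the paper handles this by first doing $n=0$ and then the general step, where every term on the right has strictly smaller degree). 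The paper also uses a three-way split $\mu_1\le 2n-1$, $\mu_1=2n$, $\mu_1=2n+1$, which makes the term $\tilde g_{n+k-1,n-1}$ appear directly without a further inclusion--exclusion.
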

\begin{proof}
The proof goes with the induction. We first prove $\tilde{g}_{k,0}$ is given by the above formula. The base cases $\tilde{g}_{0,0}=1$ and $\tilde{g}_{1,0}=X_0+Y_0$ are trivially satisfied. It suffices to prove that the formula satisfies the recurrence $\tilde{g}_{k+1,0}=\tilde{b}_{k} \tilde{g}_{k,0}-\tilde{\lambda}_{k} \tilde{g}_{k-1,0}$. Since $\sum\limits_{\mu \subseteq k \times 1} w(\mu)=\sum\limits_{i=0}^{k} X_0\cdots X_{i-1} Y_{i} \cdots Y_{k-1}$, we have 
\begin{align*}
 \sum\limits_{\mu \subseteq (k+1) \times 1} w(\mu)=&\sum\limits_{i=0}^{k+1} X_0\cdots X_{i-1} Y_{i} \cdots Y_{k}=X_0 \cdots X_k+(\sum\limits_{i=0}^{k} X_0\cdots X_{i-1} Y_{i} \cdots Y_{k-1}) Y_k \\
 =&\left(\sum\limits_{i=0}^{k} X_0\cdots X_{i-1} Y_{i} \cdots Y_{k-1}-\sum\limits_{i=0}^{k-1} X_0\cdots X_{i-1} Y_{i} \cdots  Y_{k-1}\right) X_k \\&+(\sum\limits_{i=0}^{k} X_0\cdots X_{i-1} Y_{i} \cdots Y_{k-1}) Y_k \\
 =&(\sum\limits_{i=0}^{k} X_0\cdots X_{i-1} Y_{i} \cdots Y_{k-1})(X_k+Y_k)-(\sum\limits_{i=0}^{k-1} X_0\cdots X_{i-1} Y_{i} \cdots Y_{k-2})Y_{k-1}X_k \\
 =& \tilde{b}_k (\sum\limits_{\mu \subseteq (k) \times 1} w(\mu))- \tilde{\lambda}_k (\sum\limits_{\mu \subseteq (k-1) \times 1} w(\mu)).
\end{align*}
Now we will show that for $n>0$, the formula satisfies the recurrence $$\tilde{g}_{n+k+1,n}=\tilde{g}_{n+k,n-1}+\tilde{b}_{n+k} (\tilde{g}_{n+k,n})-\tilde{\lambda}_{n+k} (\tilde{g}_{n+k-1,n})$$ which is checked as follows

\begin{align*}
    &\sum_{\mu \subseteq (k+1) \times (2n+1)} w(\mu)\\=&\sum_{\mu \subseteq (k+1) \times (2n-1)} w(\mu)+\sum_{\substack{\mu \subseteq (k+1) \times (2n+1) \\ \mu_1=2n}} w(\mu)+\sum_{\substack{\mu \subseteq (k+1) \times (2n+1) \\ \mu_1=2n+1}} w(\mu) \\
    =&\sum_{\mu \subseteq (k+1) \times (2n-1)} w(\mu)+\sum_{\mu' \subseteq (k) \times {2n}} w(\mu') X_{n+k}+\sum_{\mu' \subseteq (k) \times {2n+1}} w(\mu') Y_{n+k} \\
    =&\sum_{\mu \subseteq (k+1) \times (2n-1)} w(\mu)+\left(\sum_{\mu' \subseteq (k) \times (2n+1)} w(\mu')-\sum_{\substack{\mu' \subseteq (k) \times (2n+1) \\ \mu'_1=2n+1}} w(\mu')\right) X_{n+k}\\&+\sum_{\mu' \subseteq (k) \times (2n+1)} w(\mu') Y_{n+k}
    \\
    =&\sum_{\mu \subseteq (k+1) \times (2n-1)} w(\mu)+(\sum_{\mu' \subseteq (k) \times (2n+1)} w(\mu'))(X_{n+k}+Y_{n+k})-\sum_{\substack{\mu' \subseteq (k) \times (2n+1) \\ \mu'_1=2n+1}} w(\mu') X_{n+k}   \\
    =&\sum_{\mu \subseteq (k+1) \times (2n-1)} w(\mu)+\tilde{b}_{n+k}\left(\sum_{\mu' \subseteq (k) \times (2n+1)} w(\mu')\right)-\tilde{\lambda}_{n+k}\left(\sum_{\mu'' \subseteq (k-1) \times (2n+1) } w(\mu'')\right). 
\end{align*}

\end{proof}
\begin{Rem} \label{pos1}
Consider a vertex set $V=\{(i,j) \in \mathbb{Z} \times \mathbb{Z}_{\geq 0} \mid -2j-1 \leq i \leq 0\}$ and directed edges connecting every horizontally or vertically adjacent pair of vertices in an increasing direction (see Figure \ref{graph1}). We will assign weights to directed edges as follows. For a directed edge of the form $(i,j) \rightarrow (i+1,j)$, we give a weight 1 and for a directed edge of the form $(i,j) \rightarrow (i,j+1)$, we give a weight $Z_{i+2j+1}$ (defined in Definition \ref{deff}). The weight $W(P)$ of a path $P$ is defined to be the product of the weights of its edges. Set $u_i=(-2i-1,i)$ and $v_i=(0,i)$ for $i\geq0$. Then the formula for $\tilde{g}_{n+k,n}$ given in Proposition \ref{prop} is equivalent to $\tilde{g}_{n+k,n}=\sum\limits_{P: u_{n}\rightarrow v_{n+k}} W(P)$, where the sum is over all paths $P$ from $u_n$ to $v_{n+k}$.
\end{Rem}
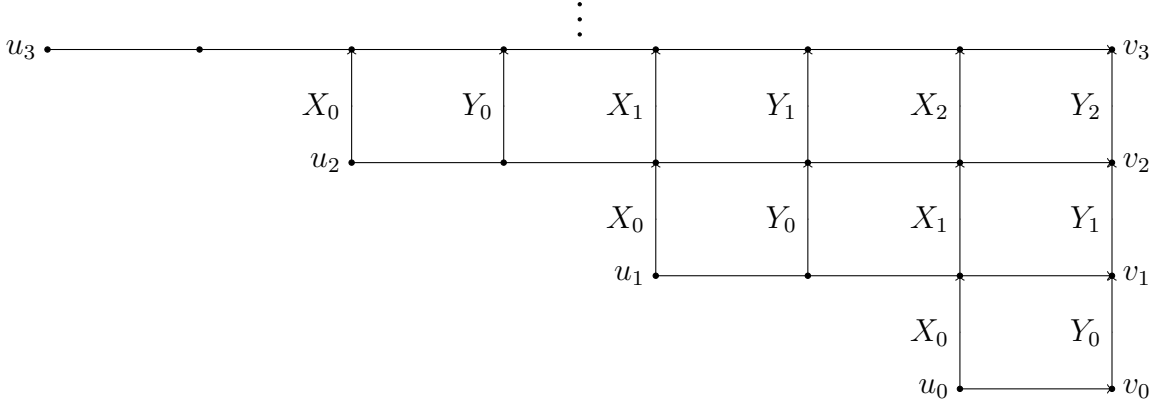
\begin{figure}[ht] 
\begin{tikzpicture}
\filldraw[black] (5,0.2)circle (0.6pt);
\filldraw[black] (5,0.4)circle (0.6pt);
\filldraw[black] (5,0.6)circle (0.6pt);

\draw[->] (-2,0) -- (12,0);
\draw[->] (2,-1.5) -- (12,-1.5);
\draw[->] (6,-3) -- (12,-3);
\draw[->] (10,-4.5) -- (12,-4.5);

\draw[->] (2,-1.5) -- (2,-0);
\filldraw[black] (2,-0.75)circle (0.0000001pt) node[anchor=east] {$X_0$};
\draw[->] (4,-1.5) -- (4,-0);
\filldraw[black] (4,-0.75)circle (0.0000001pt) node[anchor=east] {$Y_0$};
\draw[->] (6,-1.5) -- (6,-0);
\filldraw[black] (6,-0.75)circle (0.0000001pt) node[anchor=east] {$X_1$};
\draw[->] (8,-1.5) -- (8,-0);
\filldraw[black] (8,-0.75)circle (0.0000001pt) node[anchor=east] {$Y_1$};
\draw[->] (10,-1.5) -- (10,-0);
\filldraw[black] (10,-0.75)circle (0.0000001pt) node[anchor=east] {$X_2$};
\draw[->] (12,-1.5) -- (12,-0);
\filldraw[black] (12,-0.75)circle (0.0000001pt) node[anchor=east] {$Y_2$};

\draw[->] (6,-3) -- (6,-1.5);
\filldraw[black] (6,-0.75-1.5)circle (0.0000001pt) node[anchor=east] {$X_0$};
\draw[->] (8,-1.5-1.5) -- (8,-0-1.5);
\filldraw[black] (8,-0.75-1.5)circle (0.0000001pt) node[anchor=east] {$Y_0$};
\draw[->] (10,-1.5-1.5) -- (10,-0-1.5);
\filldraw[black] (10,-0.75-1.5)circle (0.0000001pt) node[anchor=east] {$X_1$};
\draw[->] (12,-1.5-1.5) -- (12,-0-1.5);
\filldraw[black] (12,-0.75-1.5)circle (0.0000001pt) node[anchor=east] {$Y_1$};

\draw[->] (10,-1.5-1.5-1.5) -- (10,-0-1.5-1.5);
\filldraw[black] (10,-0.75-1.5-1.5)circle (0.0000001pt) node[anchor=east] {$X_0$};
\draw[->] (12,-1.5-1.5-1.5) -- (12,-0-1.5-1.5);
\filldraw[black] (12,-0.75-1.5-1.5)circle (0.0000001pt) node[anchor=east] {$Y_0$};

\filldraw[black] (-2,0)circle (1pt) node[anchor=east] {$u_3$};
\filldraw[black] (0,0)circle (1pt);
\filldraw[black] (2,0)circle (1pt);
\filldraw[black] (4,0)circle (1pt);

\filldraw[black] (6,0)circle (1pt);

\filldraw[black] (8,0)circle (1pt);

\filldraw[black] (10,0)circle (1pt);

\filldraw[black] (12,0)circle (1pt) node[anchor=west] {$v_3$};

\filldraw[black] (2,0-1.5)circle (1pt) node[anchor=east] {$u_2$};

\filldraw[black] (4,0-1.5)circle (1pt);

\filldraw[black] (6,-1.5)circle (1pt);

\filldraw[black] (8,0-1.5)circle (1pt);

\filldraw[black] (10,0-1.5)circle (1pt);

\filldraw[black] (12,0-1.5)circle (1pt) node[anchor=west] {$v_2$};

\filldraw[black] (6,-3)circle (1pt) node[anchor=east] {$u_1$};

\filldraw[black] (8,0-3)circle (1pt);

\filldraw[black] (10,0-3)circle (1pt);

\filldraw[black] (12,0-3)circle (1pt) node[anchor=west] {$v_1$};

\filldraw[black] (10,0-4.5)circle (1pt) node[anchor=east] {$u_0$};

\filldraw[black] (12,0-4.5)circle (1pt) node[anchor=west] {$v_0$};

\end{tikzpicture}\caption{The figure shows the weighted directed graph constructed in Remark \ref{pos1}.}\label{graph1}
\end{figure}

From now on, we specify $X_i$'s and $Y_i$'s as given in \eqref{eq5}
\begin{equation*}
    X_i=\alpha q^i+\epsilon_1 [i]_q, \hspace{2mm} Y_i=\beta q^i+\epsilon_2 [i]_q.
\end{equation*}
Then the structure constants for the transformed Al-Salam-Chihara polynomials can be represented as follows
$$b_n=X_n+Y_n$$
$$\lambda_n=Y_{n-1}X_n-\alpha(\beta-\epsilon_2)q^{n-1}.$$ 
When $\alpha=0$ or $\beta=\epsilon_2$, Proposition \ref{prop} gives the formula for $g_{n+k,n}=[x^n]\hat{p}_{n+k}(x)$. The weight $u_m(\mu)$ given by Definition \ref{deff} can be considered as a modification of the weight $w(\mu)$ given by Definition \ref{defw}. They manifestly coincide when $\alpha=0$ or $\beta=\epsilon_2$ (Remark \ref{mod}).

\begin{Rem}\label{mod}
Let $k=s_m(\mu)$ and assume $\alpha=0$. If $k=0$, then trivially $u_m(\mu)=w(\mu)$. If $k>0$, then both $u_m(\mu)$ and $w(\mu)$ have a factor $Z_0=X_0=\alpha$ so they are both zero. Now assume $\beta=\epsilon_2$ then $Y_i$ equals $[i+1]_q \beta$. This gives 

\begin{align*}
\frac{w(\mu)}{u_m(\mu)}=&\frac{(\prod\limits_{i=1}^{l}Z_{\mu_{l+1-i}+2(i-1)})}{(\prod\limits_{i=1}^{l-k}Z_{\mu_{l+1-i}+2(i-1)})(\binom{m+l}{k}_q Y_0\cdots Y_{k-1})}=\frac{(\prod\limits_{i=l-k+1}^{l}Z_{\mu_{l+1-i}+2(i-1)})}{(\binom{m+l}{k}_q Y_0\cdots Y_{k-1})} \\ =&\frac{Y_{m+l-k}\cdots Y_{m+l-1}}{(\binom{m+l}{k}_q Y_0\cdots Y_{k-1})}=\frac{\beta^k [m+l-k+1]_q \cdots [m+l]_q}{\beta^k \binom{m+l}{k}_q [1]_q \cdots [k]_q}=1.
\end{align*}
We conclude that $w(\mu)=u_m(\mu)$ when $\alpha=0$ or $\beta=\epsilon_2$. 
\end{Rem}

\subsection{Proof of Theorem \ref{main1}}

We will first prove Theorem \ref{main1} for $g_{k,0}$.  It suffices to prove that (\ref{0}) satisfies the recurrence $g_{k+1,0}=b_k g_{k,0}-\lambda_{k} g_{k-1,0}$ (Proposition \ref{prop1}). The base cases $g_{0,0}=1$ and $g_{1,0}=X_0+Y_0=\alpha+\beta$ are trivially satisfied. We prepare with a lemma. 
\begin{Lemma} \label{lem}
The following equality holds ($k \geq 1$)
$$\binom{k+1}{i}_q X_{i-1} Y_{k-i}=\binom{k}{i-1}_q X_{k} Y_{k-i}+\binom{k}{i}_q X_{i-1} Y_{k}-\lambda_k \binom{k-1}{i-1}_q.  $$
\end{Lemma}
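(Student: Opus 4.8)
The plan is to verify the identity directly by expanding everything in terms of $\alpha$, $\beta$, $\epsilon_1$, $\epsilon_2$ and $q$, using only the definitions $X_i = \alpha q^i + \epsilon_1[i]_q$, $Y_i = \beta q^i + \epsilon_2[i]_q$, the closed form $\lambda_k = Y_{k-1}X_k - \alpha(\beta - \epsilon_2)q^{k-1}$ recorded just above the lemma, and elementary $q$-binomial identities such as the two Pascal-type recurrences $\binom{k+1}{i}_q = \binom{k}{i-1}_q + q^i\binom{k}{i}_q = q^{k+1-i}\binom{k}{i-1}_q + \binom{k}{i}_q$. Since the right-hand side contains $\binom{k}{i-1}_q$ and $\binom{k}{i}_q$ and the left-hand side contains $\binom{k+1}{i}_q$, the natural first move is to rewrite $\binom{k+1}{i}_q$ on the left using one of these Pascal recurrences so that the same two $q$-binomials appear on both sides.

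First I would substitute $\binom{k+1}{i}_q = \binom{k}{i-1}_q + q^i \binom{k}{i}_q$ into the left-hand side, giving
$\binom{k}{i-1}_q X_{i-1}Y_{k-i} + q^i\binom{k}{i}_q X_{i-1}Y_{k-i}$.
Then the claimed identity is equivalent, after cancelling and rearranging, to
$\binom{k}{i-1}_q\bigl(X_{i-1}Y_{k-i} - X_k Y_{k-i} + \lambda_k\bigr) + \binom{k}{i}_q\bigl(q^i X_{i-1}Y_{k-i} - X_{i-1}Y_k\bigr) = 0$, using $\binom{k-1}{i-1}_q$ rewritten via $\binom{k}{i-1}_q = \binom{k-1}{i-1}_q + q^{i-1}\binom{k-1}{i-2}_q$ or, more cleanly, absorbing $\lambda_k\binom{k-1}{i-1}_q$ by noting $[k]_q\binom{k-1}{i-1}_q = [i]_q\binom{k}{i}_q$ type relations. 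The key computational observation is the pair of "shift" identities $X_{i-1}Y_{k-i} - X_k Y_{k-i} = (X_{i-1}-X_k)Y_{k-i}$ with $X_{i-1} - X_k = -\epsilon_1 q^{i-1}[k-i+1]_q + \alpha(q^{i-1}-q^k)$ (a clean telescoping in $[i]_q$), and similarly $q^i X_{i-1} - X_i \cdot(\text{something})$; combined with $Y_{k-i}\cdot q^{k-i} = \beta q^{k} + \epsilon_2 q^{k-i}[k-i]_q$, these let me match the $\lambda_k = Y_{k-1}X_k - \alpha(\beta-\epsilon_2)q^{k-1}$ term coefficient by coefficient. The term $-\alpha(\beta-\epsilon_2)q^{k-1}\binom{k-1}{i-1}_q$ is exactly what is needed to absorb the "non-multiplicative" discrepancy between $\lambda_k$ and the product $Y_{k-1}X_k$, so I expect it to cancel against the difference $q^i X_{i-1}Y_{k-i} - X_{i-1}Y_k$ pieces that carry a factor $\alpha(\beta-\epsilon_2)$.

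The main obstacle I anticipate is purely bookkeeping: keeping track of the four monomial types ($\alpha\beta$, $\alpha\epsilon_2$, $\epsilon_1\beta$, $\epsilon_1\epsilon_2$) while simultaneously juggling three different $q$-binomials and converting between $[n]_q$, $\binom{n}{i}_q$, and explicit $q$-powers. I would manage this by fixing once and for all the reduction $\binom{k+1}{i}_q \to \binom{k}{i-1}_q + q^i\binom{k}{i}_q$ and $\binom{k-1}{i-1}_q \to $ (via $[k]_q\binom{k-1}{i-1}_q = [i]_q\binom{k}{i}_q$ and $[k]_q\binom{k-1}{i-1}_q = [k-i+1]_q\binom{k}{i-1}_q$) so that after clearing, the whole identity becomes a statement that two explicit polynomials in $q$ (the coefficients of $\binom{k}{i-1}_q$ and of $\binom{k}{i}_q$) each vanish; each of those is then a short, routine $q$-telescoping check using $[a]_q + q^a[b]_q = [a+b]_q$. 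An alternative, should the direct route get unwieldy, is to prove the identity by checking it on the specializations $\epsilon_1 = 0$ and then the general case via linearity in $\epsilon_1$ and $\alpha$ (both sides are affine-linear in $\alpha$ and in $\epsilon_1$ separately), reducing to two one-variable checks — but I expect the direct expansion to be short enough that this fallback is unnecessary.
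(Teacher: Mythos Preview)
Your plan is sound and will work, but it is organised differently from the paper's proof and is heavier than necessary. The paper does not split $\binom{k+1}{i}_q$ by Pascal and then chase four monomial types; instead it moves the single term $\binom{k}{i}_q X_{i-1}Y_k$ from the right to the left, factors out $X_{i-1}$, and uses the clean ``$Y$-shift'' identity
\[
\binom{k+1}{i}_q Y_{k-i}-\binom{k}{i}_q Y_k \;=\; q^{k-i}\binom{k}{i-1}_q(\beta-\epsilon_2),
\]
so the whole left side collapses to $q^{k-i}(\beta-\epsilon_2)\binom{k}{i-1}_q X_{i-1}$. On the remaining right side it substitutes $\lambda_k=X_kY_{k-1}-\alpha(\beta-\epsilon_2)q^{k-1}$, groups the $X_k$-terms to pull out the analogous factor $(\beta-\epsilon_2)$, and then matches. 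The upshot is that $(\beta-\epsilon_2)$ factors out of both sides, so the residual check involves only $X$'s and $q$-binomials rather than a four-way $\alpha\beta,\alpha\epsilon_2,\epsilon_1\beta,\epsilon_1\epsilon_2$ case split. Your approach trades this structural shortcut for more bookkeeping; it gets to the same place, and your observation that both sides are affine in $\alpha$ and in $\epsilon_1$ is a nice safety net, but you might find it quicker to adopt the paper's rearrangement and look for the common $(\beta-\epsilon_2)$ factor first. One small caution: in your displayed rearranged equation you wrote $\lambda_k$ inside the $\binom{k}{i-1}_q$ bracket, but the original term carries $\binom{k-1}{i-1}_q$; you note this immediately afterward, but when you execute the plan be sure to actually perform that conversion rather than leave it as in the display.
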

\begin{proof}
Moving the middle term on the right hand side to the left, the left hand side becomes
\begin{align*}
\binom{k+1}{i}_q X_{i-1}Y_{k-i}-\binom{k}{i}_q X_{i-1} Y_{k}&=X_{i-1}(\binom{k+1}{i}_q Y_{k-i}-\binom{k}{i}_q Y_{k}) \\ &=X_{i-1} (q^{k-i}\binom{k}{i-1}_q(\beta-\epsilon_2)).
\end{align*}
The remaining right hand side becomes
\begin{align*}
&\binom{k}{i-1}_q X_{k}Y_{k-i}-(X_k Y_{k-1}-\alpha (\beta-\epsilon_2) q^{k-1})\binom{k-1}{i-1}_q \\=&X_k(\binom{k}{i-1}_q Y_{k-i}-\binom{k-1}{i-1}_q Y_{k-1})+\alpha(\beta-\epsilon_2)q^{k-1}\binom{k-1}{i-1}_q \\ =&X_k(q^{k-i}\binom{k-1}{i-2}_q(\beta-\epsilon_2))+\alpha(\beta-\epsilon_2)q^{k-1}\binom{k-1}{i-1}_q \\ =&q^{k-i}(\beta-\epsilon_2)(\binom{k-1}{i-2}_q X_k+\alpha q^{i-1} \binom{k-1}{i-1}_q) \\ =&q^{k-i}(\beta-\epsilon_2)(\alpha (q^k \binom{k-1}{i-2}_q+ q^{i-1} \binom{k-1}{i-1}_q))+ \epsilon_1 [k]_q \binom{k-1}{i-2}_q) \\ =&q^{k-i}(\beta-\epsilon_2)(\alpha (q^{i-1}\binom{k}{i-1}_q)+ \epsilon_1 [i-1]_q \binom{k}{i-1}_q)\\=& q^{k-i}(\beta-\epsilon_2)(\binom{k}{i-1}_q X_{i-1})).
\end{align*} 
So the equality holds.
\end{proof}

\begin{Prop} \label{prop1}
The following equality holds ($k \geq 1$)
\begin{align*}
 \sum_{i=0}^{k+1} \binom{k+1}{i}_q  X_0\cdots X_{i-1} Y_0\cdots Y_{k-i}=&b_k(\sum_{i=0}^{k} \binom{k}{i}_q  X_0\cdots X_{i-1} Y_0\cdots Y_{k-i-1})\\&-\lambda_k(\sum_{i=0}^{k-1} \binom{k-1}{i}_q  X_0\cdots X_{i-1} Y_0\cdots Y_{k-i-2}).
\end{align*}
\end{Prop}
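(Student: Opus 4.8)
The plan is to deduce Proposition~\ref{prop1} from the pointwise identity of Lemma~\ref{lem} by factoring out of each summand on the left-hand side its innermost pair of factors $X_{i-1}Y_{k-i}$, applying the lemma, and then reindexing and collecting the pieces.

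To keep the bookkeeping light I would write $P_j=X_0\cdots X_{j-1}$ and $Q_j=Y_0\cdots Y_{j-1}$, with $P_0=Q_0=1$, so that the asserted identity reads
\[
\sum_{i=0}^{k+1}\binom{k+1}{i}_q P_iQ_{k+1-i}=b_k\sum_{i=0}^{k}\binom{k}{i}_q P_iQ_{k-i}-\lambda_k\sum_{i=0}^{k-1}\binom{k-1}{i}_q P_iQ_{k-1-i}.
\]
For $1\le i\le k$ one has $P_iQ_{k+1-i}=P_{i-1}Q_{k-i}\,X_{i-1}Y_{k-i}$, so multiplying the identity of Lemma~\ref{lem} through by $P_{i-1}Q_{k-i}$ and using $P_{i-1}X_{i-1}=P_i$ and $Q_{k-i}Y_{k-i}=Q_{k+1-i}$ yields, for each such $i$,
\[
\binom{k+1}{i}_q P_iQ_{k+1-i}=X_k\binom{k}{i-1}_q P_{i-1}Q_{k+1-i}+Y_k\binom{k}{i}_q P_iQ_{k-i}-\lambda_k\binom{k-1}{i-1}_q P_{i-1}Q_{k-i}.
\]

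Next I would peel off the two boundary summands of the left-hand side, namely the $i=0$ term $Q_{k+1}$ and the $i=k+1$ term $P_{k+1}$, substitute the displayed expansion on the range $1\le i\le k$, and split the result into three sums. The substitution $i\mapsto i-1$ turns the first sum into $X_k\sum_{i=0}^{k-1}\binom{k}{i}_q P_iQ_{k-i}$ and the third into $-\lambda_k\sum_{i=0}^{k-1}\binom{k-1}{i}_q P_iQ_{k-1-i}$, while the second sum is $Y_k\sum_{i=1}^{k}\binom{k}{i}_q P_iQ_{k-i}$. Writing the first as $X_k\sum_{i=0}^{k}\binom{k}{i}_q P_iQ_{k-i}-X_kP_k$ and the second as $Y_k\sum_{i=0}^{k}\binom{k}{i}_q P_iQ_{k-i}-Y_kQ_k$, the two correction terms $-X_kP_k=-P_{k+1}$ and $-Y_kQ_k=-Q_{k+1}$ cancel precisely the boundary summands $P_{k+1}$ and $Q_{k+1}$ split off at the start. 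What remains is $(X_k+Y_k)\sum_{i=0}^{k}\binom{k}{i}_q P_iQ_{k-i}-\lambda_k\sum_{i=0}^{k-1}\binom{k-1}{i}_q P_iQ_{k-1-i}$, which is the right-hand side since $b_k=X_k+Y_k$.

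Once Lemma~\ref{lem} is in hand the argument is entirely mechanical, so I do not expect a real obstacle here; the only point requiring attention is the handling of the boundary indices $i=0$ and $i=k+1$ (Lemma~\ref{lem} being valid only for $1\le i\le k$) together with lining up the index shifts so that the cancellation of $P_{k+1}$ and $Q_{k+1}$ is transparent. In short, this proposition is the routine assembly step, the substantive computation having already been carried out in the proof of Lemma~\ref{lem}.
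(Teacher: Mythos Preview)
Your argument is correct and follows essentially the same approach as the paper: multiply the identity of Lemma~\ref{lem} by the missing inner factors $X_0\cdots X_{i-2}\,Y_0\cdots Y_{k-i-1}$ (your $P_{i-1}Q_{k-i}$) and sum. The only difference is presentational: the paper sums the resulting identity over $i$ from $0$ to $k+1$ in one stroke, relying on the standard conventions $\binom{k}{-1}_q=\binom{k}{k+1}_q=0$ and empty products equal to $1$ to make the boundary cases $i=0$ and $i=k+1$ work automatically, whereas you separate these two summands out explicitly and verify the cancellation with $-X_kP_k$ and $-Y_kQ_k$ by hand.
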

\begin{proof}
Multiplying the equality in Lemma \ref{lem} with $(X_0\cdots X_{i-2}Y_0\cdots Y_{k-i-1})$ gives
\begin{align}\label{444}
   &\binom{k+1}{i}_q  X_0\cdots X_{i-1} Y_0\cdots Y_{k-i}=(\binom{k}{i-1}_q  X_0\cdots X_{i-2} Y_0\cdots Y_{k-i-1})(X_k)\\&+(\binom{k}{i}_q  X_0\cdots X_{i-1} Y_0\cdots Y_{k-i-2})(Y_k)-(\binom{k-1}{i-1}_q  X_0\cdots X_{i-2} Y_0\cdots Y_{k-i-1})(\lambda_k). \nonumber    
\end{align}
 Summing \eqref{444} for $i$ from 0 to $(k+1)$ gives a desired equality.
\end{proof}

To prove Theorem \ref{main1}, it remains to show that for $n>0$, the formula satisfies the recurrence $g_{n+k+1,n}=g_{n+k,n-1}+b_{n+k} g_{n+k,n}-\lambda_{n+k} g_{n+k-1,n}$. In other words, we will show the identity
\begin{align} \label{equ}
\sum_{\mu \subseteq (k+1) \times (2n+1)} u_n (\mu)=&\sum_{\mu \subseteq (k+1) \times (2n-1)} u_{n-1} (\mu)+b_{n+k}(\sum_{\mu \subseteq (k) \times (2n+1)} u_n (\mu)) \\
&-\lambda_{n+k} (\sum_{\mu \subseteq (k-1) \times (2n+1)} u_n (\mu)).\nonumber 
\end{align} 
The middle term on the right hand side of \eqref{equ} becomes
\begin{align*}
  &b_{n+k}(\sum_{\mu \subseteq (k) \times (2n+1)} u_n (\mu))=X_{n+k}(\sum_{\mu \subseteq (k) \times (2n+1)} u_n (\mu))+Y_{n+k}(\sum_{\mu \subseteq (k) \times (2n+1)} u_n (\mu))  \\=&X_{n+k}(\sum_{\mu \subseteq (k) \times (2n)} u_n (\mu)+\sum_{\substack{\mu \subseteq (k) \times (2n+1) \\ \mu_1=2n+1}} u_n (\mu))+Y_{n+k}(\sum_{\mu \subseteq (k) \times (2n+1)} u_n (\mu)) \\
  =&X_{n+k}(\sum_{\mu \subseteq (k) \times (2n)} u_n (\mu))+Y_{n+k}(\sum_{\mu \subseteq (k) \times (2n+1)} u_n (\mu))\\&+X_{n+k}(\sum_{\mu' \subseteq (k-1) \times (2n+1)} u_n ((2n+1,\mu'))).
\end{align*}
Plugging this to \eqref{equ} gives
\begin{align} \label{equ2}
&\sum_{\mu \subseteq (k+1) \times (2n+1)} u_n (\mu)=\sum_{\mu \subseteq (k+1) \times (2n-1)} u_{n-1} (\mu)+X_{n+k}(\sum_{\mu \subseteq (k) \times (2n)} u_n (\mu)) \\
&+Y_{n+k}(\sum_{\mu \subseteq (k) \times (2n+1)} u_n (\mu))+(\sum_{\mu \subseteq (k-1) \times (2n+1)} (X_{n+k} u_n ((2n+1,\mu))-\lambda_{n+k} u_n (\mu))).\nonumber 
\end{align} 
For $\mu \subseteq (k+1) \times (2n+1)$ we define
\begin{equation*}
\bar{u}_n(\mu) = \begin{cases}
u_{n-1}(\mu) &\text{if $\mu \subseteq (k+1) \times (2n-1)$}\\
X_{n+k} u_n (\mu') &\text{if $\mu=(2n,\mu')$ for some $\mu' \subseteq (k) \times (2n)$}\\
Y_{n+k} u_n (\mu') &\text{if $\mu=(2n+1,\mu')$ for some $\mu' \subseteq (k) \times (2n+1)$},
\end{cases}
\end{equation*}
and for $\mu \subseteq (k-1) \times (2n+1)$ we define
$$v_i (\mu)=X_{n+k} u_n ((2n+1,\mu))-\lambda_{n+k} u_n (\mu).$$
Then \eqref{equ2} is represented as follows 
\begin{align} \label{equ3}
\sum_{\mu \subseteq (k+1) \times (2n+1)} u_n (\mu)=\sum_{\mu \subseteq (k+1) \times (2n+1)} \bar{u}_n (\mu)+\sum_{\mu \subseteq (k-1) \times (2n+1)}v_i (\mu). 
\end{align} 
To prove \eqref{equ3} we first partition the sets $\{\mu \subseteq (k+1) \times (2n+1)\}$ and $\{\mu \subseteq (k-1) \times (2n+1)\}$.

\begin{Def}
Define $\hat{B}_{n+k,n}$ to be a subset of $\{\mu \subseteq (k+1) \times (2n+1)\}$ consisting of $\mu$ such that $s_n(\mu)=0$ and $s_{n-1}(\mu)=0$. For a partition $\mu \subseteq (k+1) \times (2n-1)$ such that $s_{n-1}(\mu)=l>0$, denoting $\mu=((2n-1)^l,\mu')$, we define $B^{\mu}_{n+k,n}$ to be
$$B^{\mu}_{n+k,n}=\{((2n-1)^l,\mu'),((2n+1),(2n-1)^{l-1},\mu'),\cdots,((2n+1)^l,\mu')\}.$$
\end{Def}

\begin{Def}
For a partition $\mu \subseteq (k-1)\times(2n+1)$ such that $\mu_{k-1} \geq 2$, define $\bar{s}_n(\mu)$ to be a mininum of two numbers $|\{i | \mu_i=2\}|$ and $|\{i|\mu_i=2n+1\}|$. If $\bar{s}_n(\mu)=l$, denoting $\mu=(\mu',2^l)$, we define a set $C^{\mu}_{n+k,n}$ to be 
$$C^{\mu}_{n+k,n}=\{(\mu',2^l),(\mu',2^{l-1},0),\cdots,(\mu',0^l)\}.$$
If $l=0$, then $C^{\mu}_{n+k,n}$ consists of a single element $\mu$.
\end{Def}

\begin{Ex}
Consider a partition $\mu=(3,3,1,0,0,0)$. Since $s_1(\mu)=2$, we have
$$B^{(3,3,1,0,0,0)}_{7,2}=\{(3,3,1,0,0,0),(5,3,1,0,0,0),(5,5,1,0,0,0)\}.$$
For a partition $\mu=(5,5,2,2)$, we have
$$C^{(5,5,2,2)}_{7,2}=\{(5,5,2,2),(5,5,2,0),(5,5,0,0)\}.$$
\end{Ex}

\begin{Def}
We define $B^{X}_{n+k,n}$, $B^{Y}_{n+k,n}$, $C^{X}_{n+k,n}$ and $C^{Y}_{n+k,n}$ as follows
\begin{align*}
     &B^{X}_{n+k,n}=\{ \mu \subseteq (k+1)\times(2n+1) |\text{ $s_n(\mu)=l>0$ and $\mu=((2n+1)^l,2n,\mu')$ for some $\mu'$} \} \\
      &B^{Y}_{n+k,n}=\{ \mu \subseteq (k+1)\times(2n+1) |\text{ $s_n(\mu)=l>0$ and $\mu=((2n+1)^{l+1},\mu')$ for some $\mu'$} \}\\
       &C^{X}_{n+k,n}=\{ \mu \subseteq (k-1)\times(2n+1) |\text{ $s_n(\mu)=l$ and $\mu=(\mu',0^{l+1})$ for some $\mu'$} \} \\
     &C^{Y}_{n+k,n}=\{ \mu \subseteq (k-1)\times(2n+1) |\text{ $s_n(\mu)=l$ and $\mu=(\mu',1,0^l)$ for some $\mu'$} \} .  
\end{align*}

\end{Def}
\begin{Prop}\label{decom}
The set  $\{\mu \subseteq (k+1) \times (2n+1)\}$ is a disjoint union of $B$'s and the set $\{\mu \subseteq (k-1) \times (2n+1)\}$ is a disjoint union of $C$'s. That is
\begin{align*}
    &\{\mu \subseteq (k+1) \times (2n+1)\}=\hat{B}_{n+k,n}\cupdot(
\bigcupdot_{\substack{\nu \subseteq (k+1)\times(2n-1) \\ s_{n-1}(\nu)>0}}B^{\nu}_{n+k,n} )\cupdot B^{X}_{n+k,n}\cupdot B^{Y}_{n+k,n}\\
&\{\mu \subseteq (k-1) \times (2n+1)\}=(\bigcupdot_{\substack{\nu \subseteq (k-1)\times(2n+1) \\ \nu_{k-1}\geq2}}C^{\nu}_{n+k,n} )\cupdot C^{X}_{n+k,n}\cupdot C^{Y}_{n+k,n}.
\end{align*}
\end{Prop}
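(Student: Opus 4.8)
The plan is to verify each of the two claimed decompositions by checking that the listed families are pairwise disjoint and that their union exhausts the relevant index set; the verification is entirely combinatorial, tracking the positions of the parts equal to $0$, $2$, $2n$, and $2n+1$ in a partition $\mu$. First I would handle the set $\{\mu \subseteq (k+1)\times(2n+1)\}$. Given such a $\mu$, look at $s_n(\mu)$ and $s_{n-1}(\mu)$: if both vanish, then $\mu$ lies in $\hat B_{n+k,n}$ by definition, so the real content is to show that when $s_n(\mu)>0$ or $s_{n-1}(\mu)>0$, the partition $\mu$ lands in exactly one of the remaining families $B^\nu_{n+k,n}$, $B^X_{n+k,n}$, $B^Y_{n+k,n}$. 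The key observation is that $\mu\subseteq(k+1)\times(2n+1)$ has parts bounded by $2n+1$, so $\mu_1\in\{2n+1,2n,\le 2n-1\}$; splitting on $\mu_1$ is exactly how the recurrence \eqref{equ2} was built, and the three families $B^X$, $B^Y$, and $\bigcupdot B^\nu$ are designed to match the three summands on the right-hand side of \eqref{equ3}. Concretely: if $\mu$ has $\mu_1=2n$ and at least one part equal to $2n+1$ below the leading block, it goes to $B^X$; if $\mu$ starts with strictly more copies of $2n+1$ than $s_n$ forces it goes to $B^Y$; and the remaining partitions, those with $s_n(\mu)=0$ but $s_{n-1}(\mu)>0$ or with $s_n(\mu)>0$ but neither special shape, are collected into the blocks $B^\nu_{n+k,n}$ indexed by $\nu\subseteq(k+1)\times(2n-1)$ with $s_{n-1}(\nu)>0$, via the prescription of replacing a prefix of some copies of $2n-1$ by copies of $2n+1$. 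I would spell out the inverse map $\mu\mapsto\nu$ (strip the leading $2n+1$'s, restore them as $2n-1$'s) to confirm each $\mu$ with $s_{n-1}$-positive "truncation" lies in a unique $B^\nu$, and that a partition cannot simultaneously satisfy the membership condition of two different families (disjointness follows because the leading-part data $\mu_1$ and the multiplicities of $2n+1$, $2n$, $0$ are read off uniquely from $\mu$).

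Next I would do the analogous, and slightly easier, bookkeeping for $\{\mu\subseteq(k-1)\times(2n+1)\}$. Here the relevant statistic is $\bar s_n(\mu)$, governed by the parts equal to $2$ and to $2n+1$ (when $\mu_{k-1}\ge 2$), together with the tail behaviour of $\mu$: whether $\mu$ ends in a block of $0$'s, in a single $1$ followed by $0$'s, or in a $2$. Each $C^\nu_{n+k,n}$ is the orbit obtained by successively lowering trailing $2$'s to $0$'s, so the families $C^\nu$ account for all $\mu$ with $\mu_{k-1}\ge 2$, while $C^X_{n+k,n}$ (partitions ending in $0^{l+1}$) and $C^Y_{n+k,n}$ (partitions ending in $1,0^l$) pick up the partitions with $\mu_{k-1}\le 1$. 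I would check that the decomposition of $C^\nu$'s is a genuine partition of $\{\mu:\mu_{k-1}\ge 2\}$ by noting that from any such $\mu$ one recovers $\nu$ uniquely as the partition obtained by raising the appropriate trailing $0$'s back to $2$'s up to the point dictated by $\bar s_n$, and that $C^X$, $C^Y$ are disjoint from each other and from the $C^\nu$'s because the number of trailing $0$'s and whether a $1$ precedes them is determined by $\mu$.

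The main obstacle I anticipate is not any single hard idea but the care needed to make the case analysis airtight at the boundary cases — partitions with $\mu_1=2n+1=2n$ impossible, partitions where the "special prefix/suffix" is empty, partitions sitting on the edge of the rectangle so that lengthening by a part of size $2n+1$ or shortening a part to $0$ would leave the rectangle, and the interaction between the conditions "$s_n(\mu)=0$" and "$s_{n-1}(\mu)>0$" which is exactly what separates $\hat B$ from the $B^\nu$'s. I would organize this by first proving that the four quantities $(\mu_1,\ \#\{i:\mu_i=2n+1\},\ \#\{i:\mu_i=2n\},\ \#\{i:\mu_i=0\})$ — and for the second identity the pair $(\#\{i:\mu_i=2\},\ \#\{i:\mu_i=0\})$ together with the largest index $j$ with $\mu_j\ge 1$ — are constant on each listed family and take distinct value-profiles across families; disjointness is then immediate, and exhaustion is a counting-free surjectivity check of the explicit reconstruction maps described above.
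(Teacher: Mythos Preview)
Your overall strategy---a case analysis on the parts of $\mu$ to show every partition lands in exactly one family---is the same as the paper's. However, several of your specific characterizations are wrong in ways that would derail the argument.

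For the $B$ decomposition: your description of $B^X_{n+k,n}$ as ``$\mu_1=2n$ with at least one part equal to $2n+1$ below the leading block'' is impossible for a partition, since parts weakly decrease. The correct criterion is that $\mu$ begins with $l=s_n(\mu)>0$ copies of $2n+1$ and then $\mu_{l+1}=2n$. The paper splits on the value of $\mu_{l+1}$ (the first part after the maximal block of $(2n+1)$'s), not on $\mu_1$: if $\mu_{l+1}=2n$ then $\mu\in B^X$; if $\mu_{l+1}=2n+1$ then $\mu\in B^Y$; if $\mu_{l+1}\le 2n-1$ one sets $\nu=((2n-1)^l,\mu_{l+1},\ldots,\mu_{k+1})$ and has $\mu\in B^\nu$. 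Moreover, your proposed invariant $(\mu_1,\ \#\{i:\mu_i=2n+1\},\ \#\{i:\mu_i=2n\},\ \#\{i:\mu_i=0\})$ is \emph{not} constant on a block $B^\nu_{n+k,n}$: by definition the elements of $B^\nu$ are obtained by replacing an initial run of $(2n-1)$'s in $\nu$ by $(2n+1)$'s, so both $\mu_1$ and $\#\{i:\mu_i=2n+1\}$ vary across the block. So this route to disjointness does not work as stated.

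For the $C$ decomposition: it is not true that ``the families $C^\nu$ account for all $\mu$ with $\mu_{k-1}\ge 2$'' and that $C^X\cup C^Y$ picks up $\mu_{k-1}\le 1$. Elements of $C^\nu$ are obtained from $\nu$ by lowering some trailing $2$'s to $0$'s, so they can perfectly well have $\mu_{k-1}=0$ (e.g.\ $(5,2,0)\in C^{(5,2,2)}$ for $n=2$, $k-1=3$). The paper's split is on $\mu_{k-1-l}$ where $l=s_n(\mu)$: if that part is $0$ then $\mu\in C^X$, if it is $1$ then $\mu\in C^Y$, and if it is $\ge 2$ then $\mu\in C^\nu$ for the $\nu$ obtained by raising the last $l$ zeros back to $2$'s.

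Once these characterizations are corrected the argument is exactly the paper's: exhaustion follows from the trichotomies on $\mu_{l+1}$ and $\mu_{k-1-l}$, and disjointness is immediate because those case splits are governed by data intrinsic to $\mu$.
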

\begin{proof}
For $\mu \subseteq (k+1) \times (2n+1)$, if $s_n(\mu)=0$ and $s_{n-1}(\mu)=0$ then $\mu \in \hat{B}_{n+k,n}$. If $s_{n-1}(\mu)>0$,  then $\mu \in B^{\mu}_{n+k,n}$. Now assume $s_{n}(\mu)=l>0$. By the definition of $B^{X}_{n+k,n}$ and $B^{Y}_{n+k,n}$, we have $\mu \in B^{X}_{n+k,n}$ if $\mu_{l+1}=2n$ and  $\mu \in B^{Y}_{n+k,n}$ if $\mu_{l+1}=2n+1$. For the remaining case $\mu_{l+1} \leq 2n-1$, let $\nu=((2n-1)^l,\mu_{l+1},\cdots,\mu_{k+1})$. Then $\nu$ is a partition inside $(k+1)\times(2n-1)$ with $s_{n-1}(\nu)\geq l>0$. Thus we have $\mu \in B^{\nu}_{n+k,n}$. This proves the first statement. 

For the second statement, take $\mu \subseteq (k-1)\times (2n+1)$ such that $s_n(\mu)=l$. If $\mu_{k-1-l}=0$ then $\mu \in C^{X}_{n+k,n}$ and if $\mu_{k-1-l}=1$ then $\mu \in C^{Y}_{n+k,n}$. For the remaining case  $\mu_{k-1-l} \geq 2$, let $\nu=(\mu_1,\cdots,\mu_{k-1-l},2^l)$, then we have $\mu \in C^{\nu}_{n+k,n}$.  
\end{proof}

We have decompositions of the sets  $\{\mu \subseteq (k+1) \times (2n+1)\}$ and  $\{\mu \subseteq (k-1) \times (2n+1)\}$. The next proposition relates these two decompositions.

\begin{Prop}\label{bi}
The followings hold.

(a) There exists a bijection between $\{\mu \subseteq (k+1)\times(2n-1) |  s_{n-1}(\mu)>0\}$ and \\ $\{\mu \subseteq (k-1)\times(2n+1) |  \mu_{k-1}\geq2\}$.

(b) There exists a bijection between $B^{X}_{n+k,n}$ and $C^{X}_{n+k,n}$. 

(c) There exists a bijection between $B^{Y}_{n+k,n}$ and $C^{Y}_{n+k,n}$.
\end{Prop}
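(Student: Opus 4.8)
The plan is to realize each of the three sets as a family of partitions which, after stripping off a predictable block of ``forced'' parts, is parametrized by an arbitrary partition fitting inside a rectangular box; the bijection is then obtained by matching the ``free'' partitions and reattaching the forced block appropriate to the target set. I pad every partition with zeros up to the number of rows of its ambient box, so that $s_m$ can be read off directly.

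For part (a): if $\mu\subseteq(k+1)\times(2n-1)$ has $s_{n-1}(\mu)>0$ then $\mu_1=2n-1$ and $\mu$ has at least one zero, so $\mu=(2n-1,\mu_2,\dots,\mu_k,0)$ with $(\mu_2,\dots,\mu_k)$ an arbitrary partition inside $(k-1)\times(2n-1)$; if $\mu\subseteq(k-1)\times(2n+1)$ has $\mu_{k-1}\geq 2$ then all $k-1$ parts of $\mu$ lie in $[2,2n+1]$, so subtracting $2$ from each part yields an arbitrary partition inside $(k-1)\times(2n-1)$. The two boxes coincide, and the bijection $(2n-1,\mu_2,\dots,\mu_k,0)\mapsto(\mu_2+2,\dots,\mu_k+2)$ is visibly invertible.

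Parts (b) and (c) follow the same idea, with the extra feature that the index $l=s_n(\mu)$ drops by one. For $\mu\in B^X_{n+k,n}$ with $s_n(\mu)=l$ we have $\mu=((2n+1)^l,2n,F,0^l)$, where the trailing $l$ zeros are forced by $s_n(\mu)=l$ together with the fact that $\mu$ has exactly $l$ parts equal to $2n+1$, and $F$ is an arbitrary partition inside $(k-2l)\times(2n)$. An element of $C^X_{n+k,n}$ with $s_n=l'$ has the form $((2n+1)^{l'},F',0^{\,l'+1})$ with $F'$ an arbitrary partition inside $(k-2l'-2)\times(2n)$, so taking $l'=l-1$ makes the boxes agree; the bijection $((2n+1)^l,2n,F,0^l)\mapsto((2n+1)^{\,l-1},F,0^l)$ deletes one part equal to $2n+1$ and the single part equal to $2n$. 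Because the parts of $F$ are at most $2n$, the image genuinely has $s_n=l-1$ and the required shape, and the inverse simply reinserts the $2n$ and one $2n+1$. Part (c) is parallel: for $\mu\in B^Y_{n+k,n}$ with $s_n(\mu)=l$, the presence of more than $l$ parts equal to $2n+1$ forces $\mu$ to have exactly $l$ zeros, so $\mu=((2n+1)^{\,l+1},F',0^l)$ with $F'$ having exactly $k-2l$ parts, each in $[1,2n+1]$; the matching element of $C^Y_{n+k,n}$ with $s_n=l-1$ is $((2n+1)^{\,l-1},F',1,0^{\,l-1})$, whose free block again has $k-2l$ parts in $[1,2n+1]$, and one checks as above that this gives a well-defined bijection.

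The main point requiring care is correctly identifying the forced parts, since the constraint $s_n(\mu)=l$ behaves asymmetrically: in $B^X$ it forces \emph{at least} $l$ zeros, while in $B^Y$, where $\mu$ already has more than $l$ parts equal to $2n+1$, it forces \emph{exactly} $l$ zeros. It is precisely this distinction that makes the rectangular boxes on the two sides come out with equal dimensions after the shift $l\mapsto l-1$. Once the forced blocks are pinned down, verifying that each map lands in the target set, respects the (shifted) value of $s_n$, and is invertible is routine bookkeeping; the only cases worth singling out are $l=1$ (where the leading $(2n+1)$-run disappears on the $C$-side) and the largest $l$ for which the box $(k-2l)\times(2n)$ is nonempty.
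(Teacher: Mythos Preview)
Your argument is correct and is essentially the paper's own proof: in each case you strip off the same ``forced'' block of $(2n+1)$'s, the distinguished part ($2n$ or $1$), and the trailing zeros, and match the remaining free partition, which is exactly the explicit map the paper writes down (e.g.\ $((2n+1)^l,2n,\mu'',0^l)\mapsto((2n+1)^{l-1},\mu'',0^l)$ for part~(b)). The only cosmetic slip is the phrase ``the single part equal to $2n$'' in~(b), since $F$ may itself begin with $2n$; the map is nonetheless well defined by your formula.
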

\begin{proof}
(a) Given an element $\mu$ in the first set, the partition $(\mu_2+2,\cdots,\mu_{k}+2)$ is in the second set. Conversely given an element $\nu$ in the second set, the partition $(2n-1,\nu_1-2,\cdots,\nu_{k-1}-2,0)$ is in the first set. This gives a bijection.

(b) Given an element $\mu$ in the first set, let $s_n(\mu)=l>0$ and denote $\mu=((2n+1)^l,2n,\mu')$. Note that the tail of $\mu'$ contains at least $l$ 0's. So we write $\mu=((2n+1)^l,2n,\mu'',0^l)$. Then the partition $\nu=((2n+1)^{l-1},\mu'',0^{l})$ belongs to the second set. Conversely, given an element $\nu$ in the second set, let $s_n(\nu)=l'$ (possibly zero). Likewise, we can write $\nu=((2n+1)^{l'},\nu'',0^{l'+1})$. We send this to the partition $\mu=((2n+1)^{l'+1},2n,\nu'',0^{l'+1})$ then $s_n(\mu)=l'+1>0$ which implies that $\mu$ belongs to the first set. These processes are inverse to each other thus give a bijection.

(c) The argument goes similarly with that of (b). The partition $\mu=((2n+1)^{l+1},\mu'',0^l)$ goes to the partition $\nu=((2n+1)^{l-1},\mu'',1,0^{l-1})$ and conversely, the partition $\nu=((2n+1)^{l'},\nu'',1,0^{l'})$ goes to the partition $\mu=((2n+1)^{l'+2},\nu'',0^{l'+1})$.
\end{proof}

\begin{Prop}\label{re}
The following identities hold.

(a) For a partition $\mu\in\hat{B}_{n+k,n}$, we have $u_n(\mu)=\bar{u}_n(\mu)$.

(b) For partitions $\mu \subseteq (k+1)\times(2n-1)$ such that $s_{n-1}(\mu)>0$ and $\nu \subseteq (k-1)\times(2n+1)$ such that  $\nu_{k-1}\geq2$ which are corresponding pair under the bijection in Proposition \ref{bi} (a), we have 
$$\sum_{\mu'\in B^{\mu}_{n+k,n}}u_n(\mu')=\sum_{\mu'\in B^{\mu}_{n+k,n}}\bar{u}_n(\mu')+\sum_{\nu'\in C^{\nu}_{n+k,n}}v_n(\nu').$$

(c) For partitions $\mu\in B^{X}_{n+k,n}$ and $\nu\in C^{X}_{n+k,n}$ which are corresponding pair under the bijection in Proposition \ref{bi} (b), we have 
$$u_n(\mu)=\bar{u}_n(\mu)+v_n(\nu).$$

(d) For partitions $\mu\in B^{Y}_{n+k,n}$ and $\nu\in C^{Y}_{n+k,n}$ which are corresponding pair under the bijection in Proposition \ref{bi} (c), we have 
$$u_n(\mu)=\bar{u}_n(\mu)+v_n(\nu).$$

\end{Prop}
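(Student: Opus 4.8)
Granting (a)--(d), identity \eqref{equ3} follows by summing: by Proposition \ref{decom} the left-hand side of \eqref{equ3} splits as a sum over $\hat B_{n+k,n}$, the blocks $B^{\nu}_{n+k,n}$, $B^{X}_{n+k,n}$ and $B^{Y}_{n+k,n}$; the $\bar u_n$-term of the right-hand side splits the same way, and its $v_n$-term splits over the blocks $C^{\nu}_{n+k,n}$, $C^{X}_{n+k,n}$ and $C^{Y}_{n+k,n}$. Proposition \ref{bi} pairs the $B$-blocks with the $C$-blocks, and parts (a)--(d) say precisely that the contributions match block by block. Thus \eqref{equ3} holds, which is the recurrence $g_{n+k+1,n}=g_{n+k,n-1}+b_{n+k}g_{n+k,n}-\lambda_{n+k}g_{n+k-1,n}$; together with the case $g_{k,0}$ (Proposition \ref{prop1}) this proves Theorem \ref{main1}.

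To prove (a)--(d) I will repeatedly use the following bookkeeping. If $s_m(\mu)=s$ and $\mu$ is regarded inside a rectangle with $t$ rows, then $u_m(\mu)=w(\rho)\binom{m+t}{s}_q Y_0\cdots Y_{s-1}$, where $\rho$ consists of the bottom $t-s$ rows of $\mu$; in particular $u_m(\mu)=w(\mu)$ whenever $s_m(\mu)=0$, and $u_m(\mu)$ equals $u_m$ of its bottom rows times the leading $Z$-factors. I will also use $\lambda_{n+k}=Y_{n+k-1}X_{n+k}-\alpha(\beta-\epsilon_2)q^{n+k-1}$ and the elementary identities $q^{m-j+1}Y_{j-1}-Y_m=-\epsilon_2[m-j+1]_q$ and $Y_{m-1}-\epsilon_2[m]_q=(\beta-\epsilon_2)q^{m-1}$, all immediate from $X_i=\alpha q^i+\epsilon_1[i]_q$, $Y_i=\beta q^i+\epsilon_2[i]_q$. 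For (a): since $\mu\in\hat B_{n+k,n}$ we have $s_n(\mu)=s_{n-1}(\mu)=0$, so $u_n(\mu)=w(\mu)$, and I would case on $\mu_1$. If $\mu_1\le 2n-1$ then $\bar u_n(\mu)=u_{n-1}(\mu)=w(\mu)$. If $\mu_1=2n$, the leading $Z$-factor of $w(\mu)$ is $Z_{2(n+k)}=X_{n+k}$, and peeling it off leaves $u_n(\mu')$, so $u_n(\mu)=X_{n+k}u_n(\mu')=\bar u_n(\mu)$. If $\mu_1=2n+1$, then $s_n(\mu)=0$ forces $\mu$ to have no zero row, the leading $Z$-factor is $Z_{2(n+k)+1}=Y_{n+k}$, and the same peeling gives $u_n(\mu)=Y_{n+k}u_n(\mu')=\bar u_n(\mu)$.

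Parts (c) and (d) each reduce to a single instance of Lemma \ref{lem}. For (c), write $l=s_n(\mu)$, $\mu=((2n+1)^l,2n,\mu'',0^l)$ and $\nu=((2n+1)^{l-1},\mu'',0^l)$; computing $u_n(\mu)$, $\bar u_n(\mu)$, $u_n((2n+1,\nu))$ and $u_n(\nu)$ by the bookkeeping above, all four carry the common factor $w((\mu'',0^l))\,Y_0\cdots Y_{l-2}$, and dividing it out turns $u_n(\mu)=\bar u_n(\mu)+v_n(\nu)$ into Lemma \ref{lem} with the substitution $k\mapsto n+k$, $i\mapsto n+k-l+1$ (using $\binom{a}{b}_q=\binom{a}{a-b}_q$). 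For (d), with $l=s_n(\mu)$, $\mu=((2n+1)^{l+1},\mu'',0^l)$ and $\nu=((2n+1)^{l-1},\mu'',1,0^{l-1})$, the same procedure (the common factor is now $(X_0\cdots X_{l-2})\,w(\mu'')\,(Y_0\cdots Y_{l-1})$) reduces the identity to Lemma \ref{lem} with $k\mapsto n+k$, $i\mapsto l$.

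Part (b) is the heart of the matter. Here $B^{\mu}_{n+k,n}=\{P_j:=((2n+1)^j,(2n-1)^{l-j},\mu''):0\le j\le l\}$ with $l=s_{n-1}(\mu)$, and $C^{\nu}_{n+k,n}=\{Q_0,\dots,Q_{l-1}\}$, with the $Q_r$ read off from $\nu=(\mu_2+2,\dots,\mu_k+2)$. Writing $N=n+k$ and $C=w(\mu'')$, the plan is first to compute, for $0\le j\le l$,
\[
u_n(P_j)=C\,(Y_{N-l}\cdots Y_{N-j-1})\binom{N+1}{j}_q Y_0\cdots Y_{j-1},
\]
\[
\bar u_n(P_0)=C\binom{N}{l}_q Y_0\cdots Y_{l-1},\qquad
\bar u_n(P_j)=Y_N\,C\,(Y_{N-l}\cdots Y_{N-j-1})\binom{N}{j-1}_q Y_0\cdots Y_{j-2}\quad(j\ge1).
\]
Using the $q$-Pascal identity \eqref{zzzz} and the elementary relations above, one then checks that $\sum_{j=0}^l\big(u_n(P_j)-\bar u_n(P_j)\big)$ telescopes --- the $j=l$ contribution from one half cancels $\bar u_n(P_0)$, and each remaining difference acquires a factor $\beta-\epsilon_2$ --- to the closed form
\[
(\beta-\epsilon_2)\,C\sum_{j=0}^{l-1}q^{N-j-1}\binom{N}{j}_q(Y_0\cdots Y_{j-1})(Y_{N-l}\cdots Y_{N-j-2}).
\]
It then remains to expand $v_n(Q_r)=X_N\big(u_n((2n+1,Q_r))-Y_{N-1}u_n(Q_r)\big)+\alpha(\beta-\epsilon_2)q^{N-1}u_n(Q_r)$, compute $u_n(Q_r)$ and $u_n((2n+1,Q_r))$ explicitly from the shape of $Q_r$, and show $\sum_{r=0}^{l-1}v_n(Q_r)$ collapses --- again by \eqref{zzzz} and $[a]_q=[b]_q+q^b[a-b]_q$ --- to the same closed form, the $X_N$- and $\alpha$-contributions cancelling along the way. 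The hard part will be exactly this double bookkeeping: keeping track of how $\binom{N+1}{\cdot}_q$, $\binom{N}{\cdot}_q$ and $\binom{N-1}{\cdot}_q$ recombine across the roughly $2l+1$ summands, and verifying the cancellations that produce the overall factor $\beta-\epsilon_2$. An alternative that localises the difficulty is induction on $l\geq1$: the base case $l=1$ (with $B^{\mu}=\{P_0,P_1\}$ and $C^{\nu}=\{Q_0\}$) is a short computation via Lemma \ref{lem}, and the step $l-1\to l$ reduces to matching the extra summands introduced on the two sides.
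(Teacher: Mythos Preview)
Your plan is correct and lines up with the paper's proof in all essentials. Part (a) is exactly the ``trivial to verify'' case split the paper alludes to; your formulas for $u_n(P_j)$ and $\bar u_n(P_j)$ in (b) are the ones the paper obtains, and the closed form you write for $\sum_j(u_n(P_j)-\bar u_n(P_j))$ is precisely the content of the paper's Lemma \ref{219} (which is proved there by the same induction on $l$ you propose as an alternative). The $v_n$-side of (b) is handled in the paper by first computing $v_n(\nu')$ explicitly (Lemma \ref{as}(b)), which already puts each summand in your closed form, so no further cancellation is needed there.

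The one genuine organisational difference is in (c) and (d). The paper computes $u_n(\mu)-\bar u_n(\mu)$ directly, extracts the factor $(\beta-\epsilon_2)$, and matches it against the explicit formula for $v_n(\nu)$ from Lemma \ref{as}(a). You instead pull out the common factor from all four quantities $u_n(\mu)$, $\bar u_n(\mu)$, $X_{n+k}u_n((2n+1,\nu))$, $\lambda_{n+k}u_n(\nu)$ and reduce the resulting scalar identity to Lemma \ref{lem} with the indicated substitutions; I checked both substitutions and they are correct. This is a mild streamlining --- it bypasses Lemma \ref{as} entirely --- but it is the same computation repackaged, since Lemma \ref{as} is itself a consequence of the Lemma \ref{lem} identity.
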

Note that (\ref{equ3}) follows from Proposition \ref{decom} and Proposition \ref{re}. So it suffices to prove Proposition \ref{re}. To do that, we first compute $v_n(\nu)$ explicitly. 
\begin{Lemma}\label{as} The following holds.

(a) For a partition $\nu \in C^{X}_{n+k,n}$ with $s_n(\nu)=l$, we have 
$$v_n(\nu)=X_0\cdots X_l X_{n+k-l-1} (\prod\limits_{i=l+1}^{k-2-l} Z_{\nu_i+2(k-1-i)})Y_0 \cdots Y_{l-1}q^l \binom{n+k}{l+1}_q (\beta-\epsilon_2).$$
(b) For a partition $\nu \subseteq (k-1)\times(2n+1)$ such that $\nu \notin C^{X}_{n+k,n}$ with $s_n(\nu)=l$, we have 
$$v_n(\nu)=X_0\cdots X_{l} (\prod\limits_{i=l+1}^{k-1-l} Z_{\nu_i+2(k-1-i)}) Y_0 \cdots Y_{l-1} q^{n+k-l-1} \binom{n+k}{l}_q (\beta-\epsilon_2).$$

\end{Lemma}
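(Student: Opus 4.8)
The plan is to prove Lemma~\ref{as} by a direct computation, handling parts (a) and (b) in parallel. Recall $\lambda_{n+k}=Y_{n+k-1}X_{n+k}-\alpha(\beta-\epsilon_2)q^{n+k-1}$, so
\[
v_n(\nu)=X_{n+k}\bigl(u_n((2n+1,\nu))-Y_{n+k-1}\,u_n(\nu)\bigr)+\alpha(\beta-\epsilon_2)q^{n+k-1}\,u_n(\nu),
\]
and the crucial preliminary is to express $u_n((2n+1,\nu))$ through $u_n(\nu)$. Put $l=s_n(\nu)$. I would first check how $s_n$ changes under prepending a part equal to $2n+1$: when $\nu\in C^{X}_{n+k,n}$ (equivalently $\nu$ ends in at least $l+1$ zeros) one gets $s_n((2n+1,\nu))=l+1$, while otherwise $\nu$ ends in exactly $l$ zeros (and then $\nu_l=2n+1$ if $l\ge 1$) and $s_n((2n+1,\nu))=l$. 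In both cases the partitions $(2n+1,\nu)$ and $\nu$ share the common $Z$-product $\prod_{i=1}^{k-l-1}Z_{\nu_{k-i}+2(i-1)}$, so the quotient $u_n((2n+1,\nu))/u_n(\nu)$ is a ratio of two $q$-binomial coefficients times at most one extra $Z$-factor. Using the elementary identities $[b]_q\binom{a}{b}_q=[a]_q\binom{a-1}{b-1}_q$ and \eqref{q1q1}, this quotient collapses to $\tfrac{[n+k]_q}{[l+1]_q}Y_l$ in case (a) and to $\tfrac{[n+k]_q}{[n+k-l]_q}Y_{n+k-l-1}$ in case (b) (in case (b) the extra factor is $Z_{2n+1+2(k-l-1)}=Y_{n+k-l-1}$, which is $Y_{n+k-1}$ when $l=0$).

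Substituting back, $v_n(\nu)$ becomes $u_n(\nu)$ times a scalar lying a priori in $\mathbb{Z}(q)$, which then simplifies. In case (a) this uses the $q$-integer identities $\tfrac{[n+k]_q}{[l+1]_q}Y_l-Y_{n+k-1}=\tfrac{q^{l}[n+k-1-l]_q}{[l+1]_q}(\beta-\epsilon_2)$ followed by $X_{n+k}[n+k-1-l]_q+\alpha q^{n+k-1-l}[l+1]_q=[n+k]_q\,X_{n+k-l-1}$; after the term $\alpha(\beta-\epsilon_2)q^{n+k-1}u_n(\nu)$ is absorbed, the scalar becomes $q^{l}X_{n+k-l-1}(\beta-\epsilon_2)\tfrac{[n+k]_q}{[l+1]_q}$. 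Case (b) is entirely parallel, with $[l]_q$ in place of $[n+k-1-l]_q$ and $X_l$ in place of $X_{n+k-l-1}$, giving scalar $q^{n+k-l-1}X_l(\beta-\epsilon_2)\tfrac{[n+k]_q}{[n+k-l]_q}$. All of this rests on the one-line $q$-integer facts $[a]_q[b-1]_q-[a-1]_q[b]_q=-q^{b-1}[a-b]_q$ and $q^{a}[b]_q+q^{b}[a-b]_q=q^{b}[a]_q$, which I would invoke without writing out.

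It remains to unwind $u_n(\nu)=\bigl(\prod_{i=1}^{k-l-1}Z_{\nu_{k-i}+2(i-1)}\bigr)\binom{n+k-1}{l}_q\,Y_0\cdots Y_{l-1}$ into the stated monomial shape. Reindexing $i\mapsto k-i$, the $Z$-product is $\prod_{j=l+1}^{k-1}Z_{\nu_j+2(k-1-j)}$; since $\nu$ ends in $l+1$ zeros in case (a) (resp.\ in exactly $l$ zeros in case (b)), the factors with $j\ge k-1-l$ (resp.\ $j\ge k-l$) contribute exactly $X_0\cdots X_l$ (resp.\ $X_0\cdots X_{l-1}$), while the remaining factors form the middle product $\prod_{i=l+1}^{k-2-l}Z_{\nu_i+2(k-1-i)}$ (resp.\ $\prod_{i=l+1}^{k-1-l}Z_{\nu_i+2(k-1-i)}$) appearing in the statement. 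Combining this with the scalar found above and using $\binom{n+k-1}{l}_q\tfrac{[n+k]_q}{[l+1]_q}=\binom{n+k}{l+1}_q$ in case (a) (resp.\ $\binom{n+k-1}{l}_q\tfrac{[n+k]_q}{[n+k-l]_q}=\binom{n+k}{l}_q$ in case (b)) yields exactly the asserted formulas. The main obstacle is bookkeeping rather than ideas: tracking the lengths of the zero-padded partitions and carefully separating the $0$-parts from the $(2n+1)$-parts in the two cases, together with checking the degenerate case $l=0$ of (b) (where $Y_0\cdots Y_{l-1}$ and part of the middle $Z$-product are empty) on its own. Once the combinatorial bookkeeping is set, the remaining $q$-integer and $q$-binomial manipulations are routine.
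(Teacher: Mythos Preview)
Your proposal is correct and follows essentially the same approach as the paper: a direct computation starting from the definitions of $u_n(\nu)$, $u_n((2n+1,\nu))$ and $\lambda_{n+k}=Y_{n+k-1}X_{n+k}-\alpha(\beta-\epsilon_2)q^{n+k-1}$, followed by routine $q$-binomial simplifications. The only cosmetic difference is that you organize the calculation by computing the ratio $u_n((2n+1,\nu))/u_n(\nu)$ first, whereas the paper factors out the common monomial $M$ and works with $\binom{n+k}{l+1}_qY_l$ versus $\binom{n+k-1}{l}_q$ directly; the underlying algebra is identical.
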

\begin{proof}
(a) Denoting $\nu=((2n+1)^l,\nu_{l+1},\cdots,\nu_{k-2-l},0^{l+1})$, we have $$u_n(\nu)=X_0\cdots X_l (\prod\limits_{i=l+1}^{k-2-l} Z_{\nu_i+2(k-1-i)})\binom{n+k-1}{l}_q Y_0 \cdots Y_{l-1}.$$ And since $s_n(((2n+1),\nu))=l+1$, we have $$u_n(((2n+1),\nu))=X_0\cdots X_l (\prod\limits_{i=l+1}^{k-2-l} Z_{\nu_i+2(k-1-i)})\binom{n+k}{l+1}_q Y_0 \cdots Y_{l}.$$ Setting the common factor $M=X_0\cdots X_l (\prod\limits_{i=l+1}^{k-2-l} Z_{\nu_i+2(k-1-i)}) Y_0 \cdots Y_{l-1}$, we have
\begin{align*}
    &v_i(\nu)\\=&X_{n+k} u_n(((2n+1),\nu))-\lambda_{n+k} u_n(\nu)=X_{n+k}(M \binom{n+k}{l+1}_q Y_l)-\lambda_{n+k}(M\binom{n+k-1}{l}_q) \\=& M(\binom{n+k}{l+1}_q X_{n+k} Y_l-(Y_{n+k-1}X_{n+k}-\alpha(\beta-\epsilon_2)q^{n+k-1})\binom{n+k-1}{l}_q)
    \\=&M(X_{n+k}(\binom{n+k}{l+1}_q Y_l-\binom{n+k-1}{l}_q Y_{n+k-1})+\alpha(\beta-\epsilon_2)q^{n+k-1}\binom{n+k-1}{l}_q)
    \\=&M(X_{n+k}(q^{l}\binom{n+k-1}{l+1}_q (\beta-\epsilon_2))+\alpha(\beta-\epsilon_2)q^{n+k-1}\binom{n+k-1}{l}_q)
    \\=&M(\beta-\epsilon_2)q^l(X_{n+k}\binom{n+k-1}{l+1}_q+\alpha q^{n+k-l-1}\binom{n+k-1}{l}_q)) 
    \\=&M(\beta-\epsilon_2)q^l(\alpha(q^{n+k} \binom{n+k-1}{l+1}+q^{n+k-l-1}\binom{n+k-1}{l}_q)+\epsilon_1 \binom{n+k-1}{l+1}[n]_q) 
     \\=&M(\beta-\epsilon_2)q^l(\binom{n+k}{l+1}_q X_{n+k-l-1}) 
     \\=&X_0\cdots X_l X_{n+k-l-1} (\prod\limits_{i=l+1}^{k-2-l} Z_{\nu_i+2(k-1-i)})Y_0 \cdots Y_{l-1}q^l \binom{n+k}{l+1}_q (\beta-\epsilon_2).
\end{align*}
(b) Denoting $\nu=((2n+1)^l,\nu_{l+1},\cdots,\nu_{k-1-l},0^{l})$, we have $$u_n(\nu)=X_0\cdots X_{l-1} (\prod\limits_{i=l+1}^{k-1-l} Z_{\nu_i+2(k-1-i)})\binom{n+k-1}{l}_q Y_0 \cdots Y_{l-1}.$$ And since $s_n(((2n+1),\nu))=l$, we have 
\begin{align*}
  u_n(((2n+1),\nu))=&X_0\cdots X_{l-1} (\prod\limits_{i=l}^{k-1-l} Z_{\nu_i+2(k-1-i)})\binom{n+k}{l}_q Y_0 \cdots Y_{l-1} \\
  =&X_0\cdots X_{l-1} (\prod\limits_{i=l+1}^{k-1-l} Z_{\nu_i+2(k-1-i)})\binom{n+k}{l}_q Y_0 \cdots Y_{l-1}Z_{\nu_{l}+2(k-1-l)}
   \\
  =&X_0\cdots X_{l-1} (\prod\limits_{i=l+1}^{k-1-l} Z_{\nu_i+2(k-1-i)})\binom{n+k}{l}_q Y_0 \cdots Y_{l-1}Y_{n+k-l-1}.  
\end{align*}
Setting the common factor $M=X_0\cdots X_{l-1} (\prod\limits_{i=l+1}^{k-1-l} Z_{\nu_i+2(k-1-i)}) Y_0 \cdots Y_{l-1}$, we have
\begin{align*}
    v_n(\nu)=&M( \binom{n+k}{l}_q X_{n+k} Y_{n+k-l-1}-(Y_{n+k-1}X_{n+k}-\alpha(\beta-\epsilon_2)q^{n+k-1})\binom{n+k-1}{l}_q) \\
    =&M(\beta-\epsilon_2)q^{n+k-l-1}(\binom{n+k}{l}_q X_l) \\
    =&X_0\cdots X_{l} (\prod\limits_{i=l+1}^{k-1-l} Z_{\nu_i+2(k-1-i)}) Y_0 \cdots Y_{l-1} q^{n+k-l-1} \binom{n+k}{l}_q (\beta-\epsilon_2).
\end{align*}

\end{proof}

The following lemma will be used for the proof of Proposition \ref{re} (b). 

\begin{Lemma}\label{219}
For $l$ and $n$ such that $l \geq 0$ and $n-l-1 \geq 0$, we have
\begin{align*}
    &Y_{n-l-1}\cdots Y_{n-1}+\sum_{i=1}^{l+1}Y_{n-l-1}\cdots Y_{n-i-1} \binom{n+1}{i}_q Y_0 \cdots Y_{i-1}
    \\=&\binom{n}{l+1}_qY_0 \cdots Y_l+Y_n(\sum_{i=1}^{l+1}Y_{n-l-1}\cdots Y_{n-i-1} \binom{n}{i-1}_q Y_0 \cdots Y_{i-2})\\&+(\beta-\epsilon_2)(\sum_{i=0}^{l}q^{n-i-1}Y_{n-l-1}\cdots Y_{n-i-2} \binom{n}{i}_q Y_0 \cdots Y_{i-1}).
\end{align*}
\end{Lemma}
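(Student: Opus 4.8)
The plan is to prove Lemma \ref{219} by induction on $l$, treating it as an identity in the indeterminates $Y_i$ (together with $\beta,\epsilon_2,q$), where we recall from the preceding discussion that $Y_i = \beta q^i + \epsilon_2[i]_q$, so that the only structural facts we really need are the two relations $Y_{i} - Y_{i-1} = (\beta-\epsilon_2)q^{i-1}$ (equivalently $Y_i = Y_{i-1} + (\beta-\epsilon_2)q^{i-1}$) and the $q$-Pascal identities \eqref{zzzz}, \eqref{q1q1}. So first I would set up the base case $l=0$: there both sides reduce to short expressions and the claimed identity becomes $Y_{n-1} + Y_{n-1}\binom{n+1}{1}_q Y_0 = \binom{n}{1}_q Y_0 + Y_n Y_{n-1}\binom{n}{0}_q Y_{-1}\cdot(\text{void}) + (\beta-\epsilon_2)q^{n-1}Y_{n-1}\binom{n}{0}_q$; unwinding the conventions (the $i=1$ summand on the right contains an empty product $Y_0\cdots Y_{-1}=1$) this should collapse to $Y_{n-1}(1 + [n+1]_q Y_0) = [n]_q Y_0 + Y_n Y_{n-1} + (\beta-\epsilon_2)q^{n-1}Y_{n-1}$, which follows from $Y_n = Y_{n-1}+(\beta-\epsilon_2)q^{n-1}$ and $[n+1]_q = [n]_q + q^n$ after cancelling; I would check this cancellation directly.

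For the inductive step I would assume the statement for $l-1$ (with the same $n$) and derive it for $l$. The natural move is to isolate the ``new'' terms that appear when $l$ increases: on the left the prefactor changes from $Y_{n-l}\cdots Y_{n-1}$ to $Y_{n-l-1}\cdots Y_{n-1}$, i.e.\ every term in the $l-1$ identity gets multiplied by $Y_{n-l-1}$ except that the summation range extends by the new top term $i=l+1$; similarly on the right-hand side the $q$-binomial $\binom{n}{l+1}_q$ and the extra summands at $i=l+1$ (in the middle sum) and $i=l$ (in the last sum) are new. So I would write LHS$(l) = Y_{n-l-1}\cdot(\text{LHS}(l-1) \text{ without its bare leading term})$ plus the leading term plus the $i=l+1$ summand, use the inductive hypothesis to replace the bulk, and then verify that the leftover discrepancy equals exactly the new pieces on the right. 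The key algebraic inputs for matching will be the recurrence $\binom{n}{l+1}_q = q^{?}\binom{n}{l}_q + \cdots$ from \eqref{zzzz} applied at the right shift, the relation $Y_{n-i-1} = Y_n - (\beta-\epsilon_2)(q^{n-i-1}+q^{n-i}+\cdots+q^{n-1})$ obtained by telescoping $Y_j - Y_{j-1}=(\beta-\epsilon_2)q^{j-1}$, and careful bookkeeping of which products $Y_{n-l-1}\cdots Y_{n-i-1}$ and $Y_0\cdots Y_{i-1}$ appear.

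An alternative, possibly cleaner, route is to avoid induction on $l$ and instead recognize the identity as a consequence of Proposition \ref{prop} (or Theorem \ref{main1}) specialized to $\alpha=0$, $\beta = \epsilon_2$ being relaxed — more precisely, both sides should be expressible as $\tilde g$-type sums over Young diagrams in a rectangle under the substitution $X_i \mapsto Y_{n-1-i}$ (or a similar ``reflection''), in which case the identity is just the three-term recurrence \eqref{eq1}/\eqref{eq3} for a suitable family of one-variable polynomials read off coefficient-wise, with the $(\beta-\epsilon_2)$ term tracking the correction $\lambda_n = Y_{n-1}X_n - \alpha(\beta-\epsilon_2)q^{n-1}$ discussed right before Remark \ref{mod}. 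If this interpretation can be made precise it would give a conceptual one-line proof; but making the dictionary airtight (matching the $q$-binomials, the shifted indices $n-l-1,\dots,n-1$, and the boundary conventions) is itself delicate, so I would keep the inductive proof as the safe fallback.

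The main obstacle I anticipate is purely organizational: there are three nested families of products ($Y_{n-l-1}\cdots Y_{n-i-1}$, the $q$-binomials $\binom{n+1}{i}_q$ versus $\binom{n}{i}_q$ versus $\binom{n}{i-1}_q$, and $Y_0\cdots Y_{i-1}$), the summation indices shift by $1$ between the middle and last sums on the right, and the $(\beta-\epsilon_2)$ correction enters with a $q$-power that depends on the running index — so the bulk of the work is making sure that after applying the inductive hypothesis and the $q$-Pascal relations, every term on the left is accounted for exactly once on the right with the correct $q$-power. I expect no genuinely hard idea beyond $Y_i - Y_{i-1} = (\beta-\epsilon_2)q^{i-1}$ and \eqref{zzzz}; the difficulty is entirely in the careful alignment of indices.
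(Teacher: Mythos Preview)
Your approach---induction on $l$, with the inductive step carried out by multiplying the $(l-1)$ identity through by $Y_{n-l-1}$ and then handling the single new summand at $i=l+1$ via a $q$-Pascal identity together with $Y_i-Y_{i-1}=(\beta-\epsilon_2)q^{i-1}$---is exactly what the paper does: it multiplies the $(l-1)$ identity by $Y_{n-l-1}$ to obtain one equation, separately verifies the scalar identity $\binom{n+1}{l+1}_q Y_l=\binom{n}{l+1}_q Y_l-\binom{n}{l}_q Y_{n-l-1}+\binom{n}{l}_q Y_n+q^{n-l-1}\binom{n}{l}_q(\beta-\epsilon_2)$, multiplies that by $Y_0\cdots Y_{l-1}$, and adds.

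One small slip to fix: in your base case $l=0$ you have stray $Y_{n-1}$ factors because you misread the products $Y_{n-l-1}\cdots Y_{n-i-1}$ and $Y_0\cdots Y_{i-2}$ as nonempty; with $l=0,\,i=1$ (resp.\ $i=0$ in the last sum) these are empty products equal to $1$, so the base identity is exactly the paper's $Y_{n-1}+\binom{n+1}{1}_q Y_0=\binom{n}{1}_q Y_0+Y_n+(\beta-\epsilon_2)q^{n-1}$, not the version with $Y_{n-1}$ multiplying every term.
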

\begin{proof}
For $l=0$, the identity becomes $Y_{n-1}+\binom{n+1}{1}_q Y_0=\binom{n}{1}_q Y_0+Y_n+(\beta-\epsilon_2)q^{n-1}$, which can be checked by a direct computation. Now assume that the identity holds for $(l-1)$ and for all valid $n$. From the identity corresponding to $(l-1)$ and $n$, we multiply both sides with $Y_{n-l-1}$ which gives 
\begin{align}\label{27}
    &Y_{n-l-1}\cdots Y_{n-1}+\sum_{i=1}^{l}Y_{n-l-1}\cdots Y_{n-i-1} \binom{n+1}{i}_q Y_0 \cdots Y_{i-1}
    \\=&\binom{n}{l}_q Y_0 \cdots Y_{l-1}Y_{n-l-1}+Y_n(\sum_{i=1}^{l}Y_{n-l-1}\cdots Y_{n-i-1} \binom{n}{i-1}_q Y_0 \cdots Y_{i-2})\nonumber\\&+(\beta-\epsilon_2)(\sum_{i=0}^{l-1}q^{n-i-1}Y_{n-l-1}\cdots Y_{n-i-2} \binom{n}{i}_q Y_0 \cdots Y_{i-1}).\nonumber
\end{align}
We also have the identity 
$$\binom{n+1}{l+1}_q Y_l=\binom{n}{l+1}_q Y_l-\binom{n}{l}_q Y_{n-l-1}+\binom{n}{l}_q Y_n+q^{n-l-1}\binom{n}{l}_q(\beta-\epsilon_2), $$
that can be checked by a direct computation. Multiplying both sides with $Y_0\cdots Y_{l-1}$ gives
\begin{align}\label{28}
    \binom{n+1}{l+1}_q Y_0\cdots Y_l=&\binom{n}{l+1}_q Y_0\cdots Y_l-\binom{n}{l}_q Y_0\cdots Y_{l-1}Y_{n-l-1}\\&+\binom{n}{l}_q Y_0\cdots Y_{l-1}Y_n+q^{n-l-1}\binom{n}{l}_qY_0\cdots Y_{l-1}(\beta-\epsilon_2).\nonumber
\end{align}
Adding (\ref{27}) and (\ref{28}) gives the identity corresponding to $l$ and $n$. The proof follows from the induction.
\end{proof}

\textit{Proof of Proposition \ref{re}.}

(a) It is trivial to verify.

(b) Denote $\bar{s}_n(\nu)=l$ and $\nu=((2n+1)^l,\nu_{l+1},\cdots,\nu_{k-1-l},2^l)$. Then the corresponding partition $\mu$ is 
$((2n-1)^{l+1},(\nu_{l+1}-2),\cdots,(\nu_{k-1-l}-2),0^{l+1})$. The elements of $C^{\nu}_{n+k,n}$ are denoted as
$\nu^i=((2n+1)^l,\nu_{l+1},\cdots,\nu_{k-1-l},2^{l-i},0^i)$ where $i$ ranges from 0 to $l$. As $s_n(\nu^{i})=i$, by Lemma \ref{as} we have

\begin{align*}
    &v_n(\nu^{i})=X_0\cdots X_{i} (\prod\limits_{j=i+1}^{k-1-i} Z_{(\nu^{i})_j+2(k-1-j)}) Y_0 \cdots Y_{i-1} q^{n+k-i-1} \binom{n+k}{i}_q (\beta-\epsilon_2)
    \\=&X_0\cdots X_{i} (\prod\limits_{j=i+1}^{k-1-l} Z_{(\nu^{i})_j+2(k-1-j)})(\prod\limits_{j=k-l}^{k-1-i} Z_{(\nu^{i})_j+2(k-1-j)}) Y_0 \cdots Y_{i-1} q^{n+k-i-1} \binom{n+k}{i}_q (\beta-\epsilon_2) \\=&X_0\cdots X_{l} (\prod\limits_{j=i+1}^{k-1-l} Z_{(\nu^{i})_j+2(k-1-j)}) Y_0 \cdots Y_{i-1} q^{n+k-i-1} \binom{n+k}{i}_q (\beta-\epsilon_2)
    \\=&X_0\cdots X_{l} (\prod\limits_{j=i+1}^{l} Z_{(\nu^{i})_j+2(k-1-j)})(\prod\limits_{j=l+1}^{k-1-l} Z_{(\nu^{i})_j+2(k-1-j)}) Y_0 \cdots Y_{i-1} q^{n+k-i-1} \binom{n+k}{i}_q (\beta-\epsilon_2)
     \\=&X_0\cdots X_{l} (\prod\limits_{j=l+1}^{k-1-l} Z_{\nu_{j+2(k-1-j)}})(Y_{n+k-l-1}\cdots Y_{n+k-i-2}) Y_0 \cdots Y_{i-1} q^{n+k-i-1} \binom{n+k}{i}_q (\beta-\epsilon_2).
\end{align*}

And the elements of $B^{\mu}_{n+k,n}$ are denoted as  $\mu^i=((2n+1)^{i},(2n-1)^{l+1-i},(\nu_{l+1}-2),\cdots,(\nu_{k-1-l}-2),0^{l+1})$ where $i$ ranges from 0 to $(l+1)$. We have
\begin{align*}
    &u_n(\mu^0)=X_0\cdots X_{l} (\prod\limits_{j=l+1}^{k-1-l} Z_{\nu_{j+2(k-1-j)}})(Y_{n+k-l-1}\cdots Y_{n+k-1})\\
    &\bar{u}_n(\mu^0)=X_0\cdots X_{l} (\prod\limits_{j=l=1}^{k-1-l} Z_{\nu_{j+2(k-1-j)}})(\binom{n+k}{l+1}_q Y_0\cdots Y_l),
\end{align*}
and for $i$ from 1 to $(l+1)$ we have
\begin{align*}
    &u_n(\mu^i)=X_0\cdots X_{l} (\prod\limits_{j=l+1}^{k-1-l} Z_{\nu_{j+2(k-1-j)}})(Y_{n+k-l-1}\cdots Y_{n+k-i-1} \binom{n+k+1}{i}_q Y_0 \cdots Y_{i-1})\\
    &\bar{u}_n(\mu^i)=X_0\cdots X_{l} (\prod\limits_{j=l+1}^{k-1-l} Z_{\nu_{j+2(k-1-j)}})(Y_{n+k-l-1}\cdots Y_{n+k-i-1} \binom{n+k}{i-1}_q Y_0 \cdots Y_{i-2})Y_n.
\end{align*}
Taking out the common factor $(X_0\cdots X_{l} (\prod\limits_{j=l+1}^{k-1-l} Z_{\nu_{j+2(k-1-j)}}))$, the desired identity follows from Lemma \ref{219}.

(c) Denote $s_n(\nu)=l$ and $\nu=((2n+1)^l,\nu_{l+1},\cdots,\nu_{k-2-l},0^{l+1})$. Then the corresponding partition $\mu$ is $((2n+1)^{l+1},2n,\nu_{l+1},\cdots,\nu_{k-2-l},0^{l+1})$. We have 
\begin{align*}
    u_n(\mu)=&X_0\cdots X_l(\prod\limits_{j=l+2}^{k-l} Z_{\mu_j+2(k+1-j)})\binom{n+k+1}{l+1}_q Y_0\cdots Y_l
    \\=&X_0\cdots X_l X_{n+k-l-1}(\prod\limits_{j=l+1}^{k-2-l} Z_{\nu_j+2(k-1-j)})\binom{n+k+1}{l+1}_q Y_0\cdots Y_l,
\end{align*}
and 
\begin{align*}
    \bar{u}_n(\mu)=X_0\cdots X_l X_{n+k-l-1}(\prod\limits_{j=l+1}^{k-2-l} Z_{\nu_j+2(k-1-j)})\binom{n+k}{l}_q Y_0\cdots Y_{l-1}Y_{n+k}.
\end{align*}
So we have 
\begin{align*}
     &u_n(\mu)-\bar{u}_n(\mu)\\=&(X_0\cdots X_l X_{n+k-l-1}(\prod\limits_{j=l+1}^{k-2-l} Z_{\nu_j+2(k-1-j)}) Y_0\cdots Y_{l-1})(\binom{n+k+1}{l+1}_q Y_l-\binom{n+k}{l}_q Y_{n+k}) \\=&(X_0\cdots X_l X_{n+k-l-1}(\prod\limits_{j=l+1}^{k-2-l} Z_{\nu_j+2(k-1-j)}) Y_0\cdots Y_{l-1})q^l\binom{n+k}{l+1}_q(\beta-\epsilon_2).
\end{align*}
The proof follows from Lemma \ref{as}.

(d) Denote $s_n(\nu)=l$ and $\nu=((2n+1)^l,\nu_{l+1},\cdots,\nu_{k-2-l},1,0^{l})$. Then the corresponding partition $\mu$ is $((2n+1)^{l+2},2n,\nu_{l+1},\cdots,\nu_{k-2-l},0^{l+1})$. The proof follows similarly with (c).

\section{Proof of Theorem \ref{main2}} Our goal in this section is to prove Theorem \ref{main2}, which gives a combinatorial formula for the coefficients of the transformed Al-Salam-Chihara polynomials as a weighted sum over pairs of sets. In Section \ref{bijection}, we start by providing a bijective proof of a combinatorial identity \eqref{simple}, which motivates the formula in Theorem \ref{main2}. In Section \ref{gqb}, we study generalized $q$-binomial coefficients $M_n^{\mu}(b)$ (Definition \ref{weight}) to establish key lemmas (Lemma \ref{ana}, Lemma \ref{t}) for the proof of Theorem \ref{main2}. In Section \ref{pt}, we finish the proof.
\subsection{Bijective proof of the identity \texorpdfstring{$\eqref{simple}$}{Lg}}\label{bijection}
\begin{Def}
For non-negative integers $n$, $a$ and $b$, we define $T(n,a,b)$ to be a set of pairs $(S_1,S_2)$ such that $S_1\subseteq \{0,\cdots,(n+a-1)\}$ with $|S_1|=a$ and $S_2\subseteq \{0,\cdots,(n+a+b-1)\}$ with $|S_2|=b$. For a set $S$ with integer elements, we define $\sum S=\sum\limits_{i \in S}i$.  
\end{Def}
 The identity \eqref{simple} can be rephrased as follows
\begin{align}\label{bb}
\sum\limits_{(A_1,B_1)\in T(n,a,b)}q^{\sum A_1+\sum B_1}=\sum\limits_{(B_2,A_2)\in T(n,b,a)}q^{\sum A_2+\sum B_2}
\end{align}
The identity \eqref{bb} shows that there exists a bijection between $T(n,a,b)$ and $T(n,b,a)$ that preserves the sum. Now we construct a such bijection. Recall from Definition \ref{S} that $S(k)$ is the $k$-th smallest element of the set $(\{0\}\cup\mathbb{N})-S$ and $\lambda_S$ is $(i_1,i_2-1,\cdots,i_s-s+1)$ where $S=\{i_1<\cdots<i_s\}$.
\begin{Def}\label{bijection1}
For $(A_1,B_1) \in T(n,a,b)$, we denote $A_1=\{ i_1<\cdots<i_a\}$, $B_1 \cap \{n+b,\cdots,n+b+a-1\}=\{n+b+a-j_k<\cdots<n+b+a-j_1\}$ and $\mu=\lambda_{A_1}$. We define a map $\psi_{n,a,b}$ from $T(n,a,b)$ as follows 
\begin{align*}
    &(B_2,A_2)=\psi_{n,a,b}((A_1,B_1))\\
    &A_2=\{i_m |m \neq j_l\} \cup \{n+a+b-(B_1(\mu_{j_l}+l)-(i_{j_l}-j_l))|l=1,\cdots,k\}\\
    &B_2=(B_1\cap\{0,\cdots,n+b-1\})\cup\{B_1(\mu_{j_l}+l)|l=1,\cdots,k\}.
\end{align*}
Note that we have $\sum A_1+\sum B_1=\sum A_2+\sum B_2$ from the construction.
\end{Def}
\begin{Prop}
With the notation in Definition \ref{bijection1}, we have $(B_2,A_2) \in T(n,b,a)$. And we have $(A_1,B_1)=\psi_{n,b,a}((B_2,A_2))$.
\end{Prop}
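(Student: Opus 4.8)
The plan is to establish two things: that $(B_2,A_2)$ genuinely lies in $T(n,b,a)$ (well-definedness of $\psi_{n,a,b}$, and --- by the $a\leftrightarrow b$ mirror of the argument --- of $\psi_{n,b,a}$ too), and that $\psi_{n,b,a}$ sends $(B_2,A_2)$ back to $(A_1,B_1)$; combined with $|T(n,a,b)|=|T(n,b,a)|$ (the Gaussian-binomial identity \eqref{simple}), the second point upgrades $\psi_{n,a,b}$ to a bijection with two-sided inverse $\psi_{n,b,a}$. The first move I would make is to exploit $\mu=\lambda_{A_1}$, i.e.\ $i_m=\mu_m+(m-1)$, so that $i_{j_l}-j_l=\mu_{j_l}-1$ and the $l$-th transferred entry of $A_2$ becomes
\[
N_l\;:=\;(n+a+b-1)-\bigl(B_1(\mu_{j_l}+l)-\mu_{j_l}\bigr),
\]
which runs in parallel with the $l$-th transferred entry $B_1(\mu_{j_l}+l)$ of $B_2$. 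I would also record three elementary facts: (i) $1\le j_1<\dots<j_k\le a$; (ii) $\mu$ is weakly increasing with $0\le\mu_1\le\dots\le\mu_a\le n$, so $\mu_{j_l}+l$ is \emph{strictly} increasing in $l$ and $1\le\mu_{j_l}+l\le n+k$; (iii) for finite $S\subseteq\mathbb{Z}_{\ge0}$, the enumeration $m\mapsto S(m)$ of the complement is strictly increasing and $S(m)=(m-1)+|\{s\in S: s<S(m)\}|$.

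For well-definedness: $B_1\cap\{0,\dots,n+b-1\}$ has $b-k$ elements, so the complement of $B_1$ in $\{0,\dots,n+b-1\}$ has $n+k$ elements, i.e.\ $B_1(1),\dots,B_1(n+k)\le n+b-1<B_1(n+k+1)$; by (ii) each $B_1(\mu_{j_l}+l)$ therefore lies in $\{0,\dots,n+b-1\}$, the $k$ of them are distinct and avoid $B_1$, so $B_2\subseteq\{0,\dots,n+b-1\}$ with $|B_2|=b$. For $A_2$, the $a-k$ untransferred $i_m$ lie in $\{0,\dots,n+a-1\}$; and writing $B_1(\mu_{j_l}+l)-\mu_{j_l}=(l-1)+f_l$ with $f_l=|\{s\in B_1: s<B_1(\mu_{j_l}+l)\}|\le b-k$ (such $s$ lie below $n+b-1$), and using $l\le k$, one gets $0\le(l-1)+f_l\le b-1$, hence $n+a\le N_l\le n+a+b-1$; moreover $(l-1)+f_l$ is strictly increasing in $l$ (the holes $B_1(\mu_{j_l}+l)$ increase, so $f_l$ is weakly increasing), so $N_1>\dots>N_k$. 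Thus $(B_2,A_2)\in T(n,b,a)$, and --- crucially for the next step --- $A_2\cap\{n+a,\dots,n+a+b-1\}=\{N_k<\dots<N_1\}$ while every untransferred $i_m$ stays in $\{0,\dots,n+a-1\}$.

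For invertibility I would feed $(B_2,A_2)$ into $\psi_{n,b,a}$ and decode. By the previous paragraph the ``top block'' that $\psi_{n,b,a}$ reads off is exactly $\{N_l\}$, so it sees $k'=k$ transferred entries, indexed by $j'_l:=n+a+b-N_l=\bigl(B_1(\mu_{j_l}+l)-\mu_{j_l}\bigr)+1\in\{1,\dots,b\}$. Two local computations then close the argument. First, $B_1(\mu_{j_l}+l)$ sits at position $j'_l$ in the sorted set $B_2$: counting the elements of $B_2$ below it gives $f_l$ (from $B_1\cap\{0,\dots,n+b-1\}$) plus $l-1$ (from the transferred entries of smaller index), i.e.\ $(l-1)+f_l=B_1(\mu_{j_l}+l)-\mu_{j_l}$; hence the untransferred part of $B_2$ is precisely $B_1\cap\{0,\dots,n+b-1\}$ and $(\lambda_{B_2})_{j'_l}=\mu_{j_l}$. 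Second, the hole identity $A_2(\mu_{j_l}+l)=i_{j_l}$: below $n+a$ the set $A_2$ equals $\{i_m: m\ne j_l\}$, so $i_{j_l}$ is a hole, with exactly $j_l-l$ elements of $A_2$ beneath it, making it the $\bigl(i_{j_l}-(j_l-l)+1\bigr)$-th hole, and $i_{j_l}-j_l+l+1=\mu_{j_l}+l$. Substituting these two facts into the two output formulas of $\psi_{n,b,a}$ gives $\{i_m: m\ne j_l\}\cup\{i_{j_l}\}=A_1$ for the first coordinate and $\bigl(B_1\cap\{0,\dots,n+b-1\}\bigr)\cup\{n+a+b-j_l\}=B_1$ for the second (using $i_{j_l}-\mu_{j_l}=j_l-1$), as desired. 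Sum-preservation is immediate from the construction, as already noted after Definition~\ref{bijection1}.

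The monotonicity and counting in the second paragraph are routine; the main obstacle is carrying out the two local computations in the third paragraph without index errors, especially because $\psi_{n,b,a}$ splits $A_2$ at $n+a-1$ while the transferred $N_l$ live in $\{n+a,\dots,n+a+b-1\}$ and the untransferred $i_m$ in $\{0,\dots,n+a-1\}$ --- this separation is exactly what makes both facts clean, but these splitting points differ from the ones in the forward map and coincide only when $a=b$. I would therefore isolate the hole identity as a short lemma computing $A_2(m)$ for $m$ in the relevant range, after which the whole inverse statement reduces to substitution.
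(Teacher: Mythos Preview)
Your argument is correct and follows essentially the same route as the paper's own proof: both establish $B_2\subseteq\{0,\dots,n+b-1\}$ via $B_1(\mu_{j_l}+l)\le B_1(n+k)\le n+b-1$, then compute the position of $B_1(\mu_{j_l}+l)$ inside $B_2$ (your $j'_l$, the paper's $r_l$) by counting elements below it, deduce $(\lambda_{B_2})_{j'_l}=\mu_{j_l}$, prove the hole identity $A_2(\mu_{j_l}+l)=i_{j_l}$ by counting $j_l-l$ elements of $A_2$ below $i_{j_l}$, and substitute into the definition of $\psi_{n,b,a}$. Your extra remark about using $|T(n,a,b)|=|T(n,b,a)|$ to upgrade to a bijection is unnecessary (the symmetric argument already gives the other composition), but it does no harm.
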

\begin{proof} We have $B_1(\mu_{j_1}+1)<\cdots<B_1(\mu_{j_k}+k)$ and since $\mu_{j_k}\leq n$, we have $B_1(\mu_{j_k}+k)\leq B_1(n+k)$. As $|B_1\cap\{0,\cdots,(n+b-1)\}|=b-k$, we have $B_1(n+k)\leq (n+b-1)$, which implies  $B_2\subseteq \{0,\cdots,(n+b-1)\}$ with $|B_2|=b$. Now we set $r_l=B_1(\mu_{j_l}+l)-(i_{j_l}-j_l)$.
Since $B_1(\mu_{j_l}+l)$ is the $(\mu_{j_l}+l)$-th smallest element in a set $\{0,\cdots,n+b-1\}-B_1$, there are $B_1(\mu_{j_l}+l)-(\mu_{j_l}+l+1)$ elements in $B_1$ smaller than $B_1(\mu_{j_l}+l)$. Also $B_1(\mu_{j_1}+1),\cdots,B_1(\mu_{j_{l-1}}+l-1)$ are smaller than $B_1(\mu_{j_l}+l)$ so there are total $(B_1(\mu_{j_l}+l)-(\mu_{j_l}+l+1)+l-1)$ elements smaller than $B_1(\mu_{j_l}+l)$ in $B_2$. So $B_1(\mu_{j_l}+l)$ is the $B_1(\mu_{j_l}+l)-(\mu_{j_l}+1)=(B_1(\mu_{j_l}+l)-(i_{j_l}-j_l))=r_l$-th smallest element in $B_2$. This implies that $1\leq r_1<\cdots<r_k\leq b$, so the sets $\{i_m |m \neq j_l\}$ and $\{n+a+b-r_l|l=1,\cdots,k\}$ are disjoint. Thus we have $A_2\subseteq \{0,\cdots,(n+a+b-1)\}$ with $|A_2|=a$. 

Denoting $\mu'=\lambda_{B_2}$, we have $\mu'_{r_l}+l=B_1(\mu_{j_l}+l)-r_l+1+l=i_{j_l}-j_l+l+1$ and $A_2\cap\{n+a,\cdots,n+a+b-1\}=\{n+a+b-r_1<\cdots<n+a+b-r_k\}$. Since there are $(j_l-l)$ elements smaller than $i_{j_l}$ in $A_2$, we have $i_{j_l}=A_2(i_{j_l}-j_l+l+1)=A_2(\mu'_{r_l}+l)$. We see $(A_1,B_1)=\psi_{n,b,a}((B_2,A_2))$.
\end{proof}
\begin{Ex}\label{exex}
For $n=1$, $a=3$ and $b=4$, consider $A_1=\{0,2,3\}$ and $B_1=\{2,4,5,7\}$ denoting $\mu=\lambda_{A_1}=(0,1,1)$. The above process changes the element $7=8-1 \in B_1$ to $B_1(\mu_1+1)=0$ and correspondingly change the element $0\in A_1$ to 7. Likewise, we change the element $5=8-3 \in B_1$ to the element $B_1(\mu_3+2)=3$ and change the element $3\in A_1$ to $5$. So we have $\psi_{1,3,4}((A_1,B_1))=(B_2,A_2)=(\{0,2,3,4\},\{2,5,7\})$. Since we have $\mu'=\lambda_{B_2}=(0,1,1,1)$, the element $7=8-1 \in A_2$ goes to $A_2(\mu'_1+1)=0$ and $0\in B_2$ goes to 7. Likewise the element $5=8-3 \in A_2$ goes to $A_2(\mu'_3+2)=3$ and $3\in B_2$ goes to 5. We see $\psi_{1,4,3}((B_2,A_2))=((A_1,B_1))$.
\end{Ex}
\begin{Rem}\label{ind}
Consider $B \subseteq \{0,\cdots,(n+a+b-1)\}$ with $|B|=b$ and $|B \cap \{0,\cdots,(n+b-1)\}|=b-k$. For each positive integer $l$, there are $(B(l)-l+1)$ elements in $B$ smaller than $B(l)$. And $B(l)$ is the unique integer with that property. Since we have $B(1),\cdots,B(n+k) \leq (n+b-1)$, for $C=B\cap\{0,\cdots,(n+b-2)\}$, we have $B(l)=C(l)$ for $l$ from 1 to $(n+k)$.  
\end{Rem}

\begin{Rem} \label{sym}
With the notation in Definition \ref{bijection1}, the weight $w_n(A_1,B_1)$ given in Definition \ref{weight} becomes 
$$w_n(A_1,B_1)=q^{\sum B_1-\sum B_2}(\prod\limits_{i\in A_1}X_i)(\prod\limits_{i\in B_2}Y_i).$$ 
Now let $\bar{w}_n(A_1,B_1)$ be the one obtained by exchanging $X_i \leftrightarrow Y_i$ from $w_n(A_1,B_1)$. Then we have 
\begin{equation*}
    \bar{w}_n(A_1,B_1)=q^{\sum B_1-\sum B_2}(\prod\limits_{i\in A_1}Y_i)(\prod\limits_{i\in B_2}X_i)=q^{\sum A_2-\sum A_1}(\prod\limits_{i\in B_2}X_i)(\prod\limits_{i\in A_1}Y_i)=w_n(B_2,A_2).
\end{equation*}
This gives
\begin{align*}
    \sum_{a+b=k}(\sum_{\substack{A_1 \subseteq \{0,\cdots,n+a-1\}\\|A_1|=a}}(\sum_{\substack{B_1 \subseteq \{0,\cdots,n+a+b-1\}\\|B_1|=b}}\bar{w}_n(A_1,B_1))) \\=
    \sum_{a+b=k}(\sum_{\substack{B_2 \subseteq \{0,\cdots,n+b-1\}\\|B_2|=b}}(\sum_{\substack{A_2 \subseteq \{0,\cdots,n+a+b-1\}\\|A_2|=a}}w_n(B_2,A_2))) \\=\sum_{a+b=k}(\sum_{\substack{A_1 \subseteq \{0,\cdots,n+a-1\}\\|A_1|=a}}(\sum_{\substack{B_1 \subseteq \{0,\cdots,n+a+b-1\}\\|B_1|=b}}w_n(A_1,B_1))).
\end{align*}
Thus the formula in Theorem \ref{main2} is invariant as a polynomial in $X_i$'s and $Y_i$'s under the exchange $X_i \leftrightarrow Y_i$. This was not true for the formula in Theorem \ref{main1}. 
\end{Rem}
\begin{Rem}\label{pos2}
When $\epsilon_2=0$, we have $Y_i=q^i \beta$. So the weight $w_n(A,B)$ simply becomes $(\prod\limits_{i \in A} X_i)(\prod\limits_{i \in B} Y_i)$. Then considering a directed graph in Figure \ref{graph2}, the formula for $g_{n+k,n}$ in Theorem \ref{main2} specializes to sum over weights of all paths from $u_n$ to $v_{n+k}$. When $\epsilon_1=0$, we have an analogous result by Remark \ref{sym}. 
\end{Rem}
\begin{figure}[ht] 
\begin{tikzpicture}
\filldraw[black] (5,0.2)circle (0.6pt);
\filldraw[black] (5,0.4)circle (0.6pt);
\filldraw[black] (5,0.6)circle (0.6pt);

\draw[->] (-2,0) -- (12,0);
\draw[->] (-2,-1.5) -- (12,-1.5);
\draw[->] (-2,-3) -- (12,-3);
\draw[->] (-2,-4.5) -- (12,-4.5);

\filldraw[black] (-2,0-4.5)circle (1pt) node[anchor=east] {$u_0$};
\filldraw[black] (-2,0-3)circle (1pt) node[anchor=east] {$u_1$};
\filldraw[black] (-2,0-1.5)circle (1pt) node[anchor=east] {$u_2$};
\filldraw[black] (-2,0)circle (1pt) node[anchor=east] {$u_3$};

\filldraw[black] (12,0-4.5)circle (1pt) node[anchor=west] {$v_0$};
\filldraw[black] (12,0-3)circle (1pt) node[anchor=west] {$v_1$};
\filldraw[black] (12,0-1.5)circle (1pt) node[anchor=west] {$v_2$};
\filldraw[black] (12,0)circle (1pt) node[anchor=west] {$v_3$};

\draw[->] (1.5,-4.5) -- (1.5,-3);
\filldraw[black] (1.5,-3.75)circle (0.0000001pt) node[anchor=east] {$X_0$};

\draw[->] (2.5,-3) -- (2.5,-1.5);
\filldraw[black] (2.5,-2.25)circle (0.0000001pt) node[anchor=east] {$X_1$};
\draw[->] (0.5,-3) -- (0.5,-1.5);
\filldraw[black] (0.5,-2.25)circle (0.0000001pt) node[anchor=east] {$X_0$};

\draw[->] (-0.5,-1.5) -- (-0.5,0);
\filldraw[black] (-0.5,-0.75)circle (0.0000001pt) node[anchor=east] {$X_0$};
\draw[->] (1.5,-1.5) -- (1.5,0);
\filldraw[black] (1.5,-0.75)circle (0.0000001pt) node[anchor=east] {$X_1$};
\draw[->] (3.5,-1.5) -- (3.5,0);
\filldraw[black] (3.5,-0.75)circle (0.0000001pt) node[anchor=east] {$X_2$};

\draw[->] (1.5+7,-4.5) -- (1.5+7,-3);
\filldraw[black] (1.5+7,-3.75)circle (0.0000001pt) node[anchor=east] {$Y_0$};

\draw[->] (2.5+7,-3) -- (2.5+7,-1.5);
\filldraw[black] (2.5+7,-2.25)circle (0.0000001pt) node[anchor=east] {$Y_1$};
\draw[->] (0.5+7,-3) -- (0.5+7,-1.5);
\filldraw[black] (0.5+7,-2.25)circle (0.0000001pt) node[anchor=east] {$Y_0$};

\draw[->] (-0.5+7,-1.5) -- (-0.5+7,0);
\filldraw[black] (-0.5+7,-0.75)circle (0.0000001pt) node[anchor=east] {$Y_0$};
\draw[->] (1.5+7,-1.5) -- (1.5+7,0);
\filldraw[black] (1.5+7,-0.75)circle (0.0000001pt) node[anchor=east] {$Y_1$};
\draw[->] (3.5+7,-1.5) -- (3.5+7,0);
\filldraw[black] (3.5+7,-0.75)circle (0.0000001pt) node[anchor=east] {$Y_2$};
\end{tikzpicture}\caption{The figure shows the weighted directed graph that gives rise to $g_{n+k,n}$ when $\epsilon_2=0$.}\label{graph2}
\end{figure}
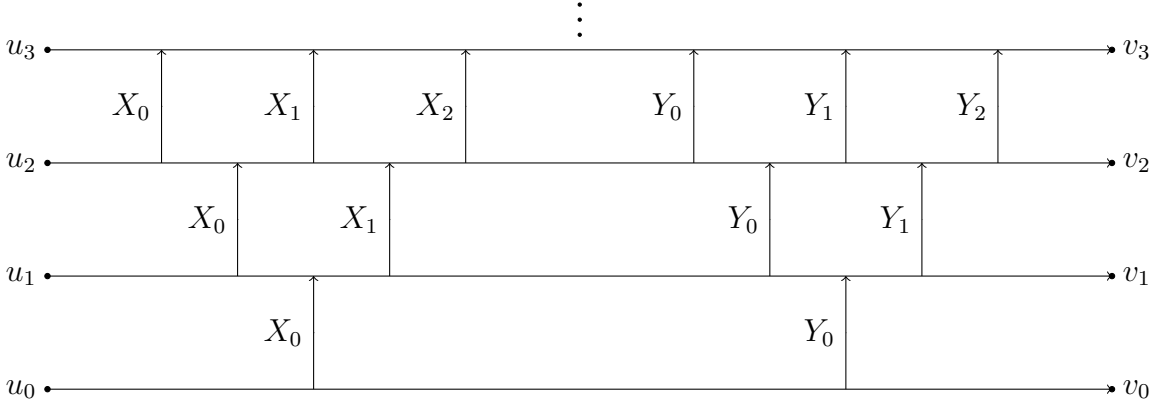

\subsection{Generalized \textit{q}-binomial coefficients}\label{gqb}
In this section, we prove Lemma \ref{ana} and Lemma \ref{t}. 

It follows from definitions (Definition \ref{weight}, Definitions \ref{wei}) that  
\begin{equation*}
    \sum_{\substack{B\subseteq\{0,\cdots,(n+a+b-1)\}\\|B|=b}}w_n(A,B)=(\prod\limits_{i\in A}X_i)M_{n}^{\lambda_A}(b)
\end{equation*} where $A\subseteq \{0,\cdots,(n+a-1)\}$ with $|A|=a$. Thus the formula in Theorem \ref{main2} can be rephrased as follows 
\begin{equation}\label{main22}
g_{n+k,k}=\sum_{a+b=k}\left(\sum_{\substack{A\subseteq\{0,\cdots,(n+a-1)\}\\|A|=a}}(\prod\limits_{i\in A}X_i)M_{n}^{\lambda_A}(b)\right).
\end{equation}

Note that we defined $M_{n}^{\mu}(b)$ for a weakly increasing composition $\mu$ possibly starting with (-1). To do that we  introduced a dummy variable $Y_{-1}=q^{-1}(\beta-\epsilon_1)$ which was defined accordingly to satisfy the recurrence $Y_{n+1}=q Y_{n}+\epsilon_2$. As $\lambda_A$ in \eqref{main22} consists of non-negative integers, we do not see $M_{n}^{\mu}(b)$'s such that $\mu$ starts with (-1) in \eqref{main22}. However, we will need them for the proof.

When $\mu$ consists of non-negative integers, we see that $M_n^{\mu}(b)$ is a polynomial in $Y_0,\cdots,Y_{n+b-1}$ with $\mathbb{Z}[q]$ coefficients. The next proposition computes the coefficient of each monomial.

\begin{Prop}\label{Coeff}Let $E\subseteq\{0,\cdots,n+b-1\}$ with $|E|=b$ and $\mu=(\nu_1^{e_1},\cdots,\nu_p^{e_p})$ such that $0 \leq \nu_1<\cdots<\nu_p\leq n$ with $e_i>0$. Denote the multiplicity of $\nu_i$ in $\lambda_E$ (can be possibly zero) by $f_i$ and write the corresponding elements of $E$ with $c_i,\cdots,(c_i+f_i-1)$. Then the coefficient of $\prod\limits_{i\in E} Y_i$ in $M_{n}^{\mu}(b)$ is given as follows
\begin{equation*}
    [\prod\limits_{i\in E} Y_i]M_{n}^{\mu}(b)=\sum_{k_1,\cdots,k_p}\left(\prod\limits_{i=1}^{p}(q^{k_i(d_i+k_i-1)}\binom{e_i}{k_i}_q\binom{f_i}{k_i}_q)\right),
\end{equation*} where $d_i=(n+b+\sum\limits_{j=i+1}^{p}e_j)-(c_i+f_i-1)$ and $k_i$ ranges from 0 to $min(e_i,f_i)$.
\end{Prop}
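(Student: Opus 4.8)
The plan is to unwind the definition of $M_n^{\mu}(b)$ as a sum over subsets $B \subseteq \{0,\dots,n+a+b-1\}$ with $|B|=b$, and to organize the computation by which monomial $\prod_{i\in E}Y_i$ each term contributes to. Recall that $m_n^{\mu}(B) = (\prod_{i\in B\cap\{0,\dots,n+b-1\}}Y_i)(\prod_{l=1}^{k}q^{(n+b+a-j_l)-B(\mu_{j_l}+l)}Y_{B(\mu_{j_l}+l)})$, where the ``high'' elements of $B$ lying in $\{n+b,\dots,n+b+a-1\}$ get pushed down to new indices $B(\mu_{j_l}+l)$ in $\{0,\dots,n+b-1\}$. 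So a monomial $\prod_{i\in E}Y_i$ is produced by a subset $B$ precisely when $B$ splits as a ``low'' part $B\cap\{0,\dots,n+b-1\}$ together with $k$ ``high'' elements whose relocated images, unioned with the low part, equal $E$. First I would fix the target monomial $E$ and make this relocation mechanism explicit: the low part of $B$ is some $E\setminus\{$the $k$ relocated slots$\}$, and the $k$ high elements of $B$ are determined by the positions $j_1<\dots<j_k$ among the $a$ parts of $\mu$ together with the choice of which $k$ elements of $E$ are the images. The $q$-exponent $(n+b+a-j_l)-B(\mu_{j_l}+l)$ then has to be re-expressed purely in terms of $E$, $\mu$, and these combinatorial choices.

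The key structural observation is that the relocation happens independently within each ``block'' of equal parts of $\mu$. Since $\mu = (\nu_1^{e_1},\dots,\nu_p^{e_p})$, the indices $j$ for which $\mu_j = \nu_i$ form a contiguous block of size $e_i$; when we relocate a high element sitting against the value $\nu_i$, the relevant count $B(\mu_{j_l}+l)$ depends on how many elements of $E$ (equivalently, which ``gaps'' of $\lambda_E$) have value $\nu_i$, and this is exactly the multiplicity $f_i$. Within block $i$ we are therefore choosing $k_i \le \min(e_i,f_i)$ of the $e_i$ available $\mu$-positions and $k_i$ of the $f_i$ available $E$-slots (the consecutive block $c_i,\dots,c_i+f_i-1$), and the contribution of these choices should factor as a product over $i$. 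The plan is: (1) set up the bijection between $\{B : m_n^{\mu}(B)$ contributes to $\prod_{i\in E}Y_i\}$ and tuples of per-block choices $(k_i;$ which $k_i$ positions in each block$)$; (2) show the $q$-weight factors across blocks; (3) within a single block, carry out the sum over the two nested $k_i$-subsets and identify it with $q^{k_i(d_i+k_i-1)}\binom{e_i}{k_i}_q\binom{f_i}{k_i}_q$.

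Step (3) is where I expect the real work to be, and it is essentially a $q$-Vandermonde-type computation. After pinning down that relocating the $l$-th (from the top) chosen high element against block $i$ lands it at a specific position relative to $c_i,\dots,c_i+f_i-1$, the $q$-exponents accumulate: the ``base'' shift gives the $q^{k_i d_i}$ factor (this is where the quantity $d_i = (n+b+\sum_{j>i}e_j)-(c_i+f_i-1)$ enters, measuring the displacement of block $i$'s $E$-slots from the top boundary $n+b+a$ minus the $\mu$-positions strictly above block $i$), while the interleaving of the $k_i$ relocated elements among themselves and among the $f_i-k_i$ untouched $E$-slots produces, after summing over which subsets are chosen, the extra $q^{k_i(k_i-1)}$ together with the two Gaussian binomials $\binom{e_i}{k_i}_q$ (choice of $\mu$-positions, weighted by inversions $j_1<\dots<j_k$) and $\binom{f_i}{k_i}_q$ (choice of $E$-slots). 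I would verify the single-block identity by induction on $e_i$ (or $f_i$), peeling off whether the top position of the block is chosen, using the Pascal-type recurrence $\binom{e}{k}_q = \binom{e-1}{k-1}_q q^{?} + \binom{e-1}{k}_q$ twice; the bookkeeping of the $q$-exponent shift when a position is or is not selected is the delicate point, and matching it against the stated $q^{k_i(d_i+k_i-1)}$ is the main obstacle. The case $p=1$, $\mu = (0^a)$ (so $\nu_1=0$, and one checks it against the known value $M_n^{(0^a)}(b) = q^{\binom{b}{2}}\binom{n+a+b}{b}_q\beta^b$ when $\epsilon_2=0$, noting $Y_i = q^i\beta$) is a good sanity check and is where I would start before treating general blocks.
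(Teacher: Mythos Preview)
Your overall decomposition is the same as the paper's: parametrize the $B$'s contributing to $\prod_{i\in E}Y_i$ by, for each block $i$, a choice of $k_i$ elements from the $E$-slots $\{c_i,\dots,c_i+f_i-1\}$ (which get removed from the low part of $B$) together with $k_i$ elements from the $e_i$ high positions attached to block $i$. Step (1) and step (2) of your plan are exactly what the paper does.

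The difference is in step (3). The paper does not need induction, $q$-Vandermonde, or Pascal recurrences. The single observation you are missing is that, once you have checked that the relocated index $B(\mu_{j_l}+l)$ coincides with the chosen element of $E$ you removed, the weight collapses to
\[
m_n^{\mu}(B)=q^{\,\sum B-\sum E}\prod_{i\in E}Y_i.
\]
With this in hand the coefficient of $\prod_{i\in E}Y_i$ is just $\sum_B q^{\sum B-\sum E}$, which factors over blocks since the high and low choices in different blocks are disjoint; within block $i$ one is summing $q$ to the sum of the chosen high positions minus the sum of the chosen low positions, and each of these is the standard generating function of $k_i$-subsets of a contiguous interval, giving $\binom{e_i}{k_i}_q$ and $\binom{f_i}{k_i}_q$ directly. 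The exponent bookkeeping then yields $q^{k_i(d_i+k_i-1)}$ in one line. So your plan is correct but works harder than necessary: proving $m_n^{\mu}(B)=q^{\sum B-\sum E}\prod_{i\in E}Y_i$ first short-circuits your proposed induction entirely.
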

\begin{proof} To get a monomial $\prod\limits_{i\in E} Y_i$ in $M_{n}^{\mu}(b)$ we first pick $0\leq k_i \leq min(e_i,f_i)$ and then $k_i$ integers $(c_i+f_i-t_{i,1})<\cdots<(c_i+f_i-t_{i,k_i})$ in a set $\{c_i,\cdots,c_i+f_i-1\}$ and   $(n+b+\sum\limits_{j=i}^{p}e_j-u_{i,k_i})<\cdots<(n+b+\sum\limits_{j=i}^{p}e_j-u_{i,1})$ in a set $\{(n+b+\sum\limits_{j=i+1}^{p}e_j),\cdots,(n+b+\sum\limits_{j=i}^{p}e_j-1)\}$. Now for a set
\begin{equation*}
    B=(E-\bigcup\limits_{i=1}^{p}\{c_i+f_i-t_{i,1}<\cdots<c_i+f_i-t_{i,k_i}\})\cup(\bigcup\limits_{i=1}^{p}\{n+b+\sum\limits_{j=i}^{p}e_j-u_{i,k_i},\cdots,n+b+\sum\limits_{j=i}^{p}e_j-u_{i,1}\}),
\end{equation*}  we have
\begin{align*}
    B(\nu_1+j)=(c_1+f_1-t_{1,j})& \text{\hspace{2mm}for $j$ from 1 to $k_1$}\\
     B(\nu_2+k_1+j)=(c_2+f_2-t_{2,j})& \text{\hspace{2mm}for $j$ from 1 to $k_2$}\\
     \vdots&\\
     B(\nu_p+\sum\limits_{i=1}^{p-1} k_i+j)=(c_p+f_p-t_{p,j})& \text{\hspace{2mm}for $j$ from 1 to $k_p$},
\end{align*}
which gives
$m_{n}^{\mu}(B)=q^{\sum B-\sum E}\prod\limits_{i\in E} Y_i$. Summing over all possible such $B$ with fixed $k_i$'s, we get 
\begin{equation}\label{coeff}
    \prod\limits_{i=1}^{p}(q^{k_i(d_i+k_i-1)}\binom{e_i}{k_i}_q\binom{f_i}{k_i}_q)\prod\limits_{i\in E} Y_i.
\end{equation}
Summing \eqref{coeff} over all possible $k_i$'s gives the formula for the coefficient.
\end{proof}
\begin{Ex}
Let $E=\{0,1\}$ ($\lambda_E=(0,0)$) and $\mu=(0,0)$. Then the coefficient of $Y_0Y_1$ in $M_{2}^{\mu}(2)$ comes from the following
terms \begin{align*}
    &m_2^{\mu}(\{0,1\})=Y_0Y_1\\ &m_2^{\mu}(\{0,4\})=q^3Y_0Y_1, m_2^{\mu}(\{0,5\})=q^4Y_0Y_1, m_2^{\mu}(\{1,4\})=q^4Y_0Y_1, m_2^{\mu}(\{1,5\})=q^5Y_0Y_1\\&m_2^{\mu}(\{4,5\})=q^8Y_0Y_1.
\end{align*}
So we have 
\begin{equation*}
    [Y_0Y_1]M_2^{\mu}(2)=1+(q^3+2q^4+q^5)+q^8=\binom{2}{0}_q\binom{2}{0}_q+q^{1 \cdot 3}\binom{2}{1}_q\binom{2}{1}_q+q^{2\cdot4}\binom{2}{2}_q\binom{2}{2}_q.
\end{equation*}
\end{Ex}
Now we give a generalization of \eqref{zzzz}.
\begin{Lemma}\label{ana}
For a weakly increasing composition $\mu=(\mu_1,\cdots,\mu_a)$ with $0\leq\mu_1,\cdots\mu_a\leq n+1$, $n\geq0$ and $b\geq1$, the following identity holds
$$M^{\mu}_{n+1}(b)=Y_{n+a+b}M^{\mu}_{n+1}(b-1)+M^{\mu-1}_{n}(b),$$ where 
$\mu-1=(\mu_1-1,\cdots,\mu_a-1).$
\end{Lemma}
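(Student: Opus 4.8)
\emph{Approach.} I would prove the identity directly from the definition. Write $M^\mu_{n+1}(b)=\sum_{B}m^\mu_{n+1}(B)$, the sum over all $b$-subsets $B$ of $\{0,\dots,n+a+b\}$, and split it according to whether the largest possible element $n+a+b$ lies in $B$. The plan is to show that the partial sum over the $B$ containing $n+a+b$ equals $Y_{n+a+b}\,M^\mu_{n+1}(b-1)$, and the partial sum over the $B$ avoiding $n+a+b$ equals $M^{\mu-1}_n(b)$. In the degenerate case $a=0$ there is no ``high block'', $m^\mu_n(B)=\prod_{i\in B}Y_i$, and this split is literally the recursion $e_b=Y_{n+b}e_{b-1}+e_b$ for elementary symmetric polynomials; all the content is for $a\ge1$.

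\emph{The two partial sums.} For the $B$ containing $n+a+b$, write $B=B^\ast\cup\{n+a+b\}$ with $B^\ast\subseteq\{0,\dots,n+a+b-1\}$, $|B^\ast|=b-1$. Since $n+a+b$ is the top of the high block $\{(n+1)+b,\dots,(n+1)+b+a-1\}$, it is the $l=1$ excess slot; removing it shifts the whole high block down by one, so the remaining excess indices $j_l$ become $j_l-1$ and the arguments $\mu_{j_l}+l$ of $B(\cdot)$ shift accordingly. I would match the sum of $m^\mu_{n+1}(B)$ over such $B$ with $Y_{n+a+b}M^\mu_{n+1}(b-1)$; this is not a term-by-term identity but a regrouping, in which the leftover powers of $q$ produced by the shift are absorbed using $Y_{i+1}=qY_i+\epsilon_2$ (equivalently $Y_{i+m}=q^m Y_i+\epsilon_2[m]_q$). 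For the $B$ avoiding $n+a+b$, i.e.\ $B\subseteq\{0,\dots,n+a+b-1\}$ with $|B|=b$, one compares with $M^{\mu-1}_n(b)=\sum_B m^{\mu-1}_n(B)$: passing from $(\mu,n+1)$ to $(\mu-1,n)$ moves the high block from $\{n+b+1,\dots,n+a+b\}$ down to $\{n+b,\dots,n+a+b-1\}$ and lowers every entry of $\mu$ by one. Again the two sums are reconciled by a rearrangement along the positions where the two high blocks disagree — in particular whether the boundary element $n+b$ is treated as low (contributing a factor $Y_{n+b}$ to $m^\mu_{n+1}(B)$) or as the bottom excess slot of $m^{\mu-1}_n(B)$ — using $Y_{i+1}=qY_i+\epsilon_2$ at each step and, when $\mu$ has leading zeros, the convention $Y_{-1}=q^{-1}(\beta-\epsilon_2)$; this last point is exactly why $M^{\mu-1}_n(b)$ was set up for compositions $\mu-1$ that may begin with $-1$.

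\emph{Main obstacle.} The crux, and where essentially all the work lies, is the bookkeeping for this unit shift of the high block and the induced relabelling of the excess indices $j_l$ and of the arguments $\mu_{j_l}+l$. Because of the shift the claimed identity is \emph{not} a polynomial identity in independent indeterminates $Y_i$ — it already fails formally for $a=1$, $b=2$ — and genuinely uses $Y_{i+1}=qY_i+\epsilon_2$, so it cannot be obtained by merely matching $Y$-monomial coefficients via Proposition \ref{Coeff}. A convenient device for organizing the computation is the rewriting $m^\mu_n(B)=q^{\sum B-\sum B_2}\prod_{i\in B_2}Y_i$, where $B_2\subseteq\{0,\dots,n+b-1\}$ is the $b$-set of subscripts of the $Y$'s occurring in $m^\mu_n(B)$ (as in Remark \ref{sym}); with this in hand the two regroupings come down to $q$-Pascal-type manipulations of the kind in \eqref{zzzz}.
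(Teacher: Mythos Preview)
Your split according to whether $n+a+b\in B$ is the right starting point --- the paper begins the same way --- but the plan as stated does not work: the two partial sums are \emph{not} individually equal to the two terms on the right-hand side. Already for $a=1$, $\mu=(0)$, $n=0$, $b=1$ the only $B$ containing $n+a+b=2$ is $B=\{2\}$, giving $m^{(0)}_{1}(\{2\})=q^{2}Y_{0}$, whereas $Y_{2}\,M^{(0)}_{1}(0)=Y_{2}=q^{2}Y_{0}+\epsilon_{2}(1+q)$; on the other side the sum over $B\not\ni 2$ is $Y_{0}+Y_{1}$, while $M^{(-1)}_{0}(1)=Y_{0}+q^{2}Y_{-1}=Y_{0}+Y_{1}-\epsilon_{2}(1+q)$. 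The two discrepancies are $\pm\epsilon_{2}(1+q)$ and cancel only when added. So no amount of internal regrouping of either partial sum alone can succeed; the proof must treat the two differences together.

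That is exactly what the paper does. It writes the full difference $M^{\mu}_{n+1}(b)-Y_{n+a+b}M^{\mu}_{n+1}(b-1)-M^{\mu-1}_{n}(b)$ in one stroke and, for each $B=C\cup\{n+b+a+1-j_k<\cdots<n+b+a+1-j_1\}$, packages the contributions from both sides of your split into two parallel products $\prod_l f_C^{J}(l)$ and $\prod_l \bar f_C^{J}(l)$. Passing to $Y_{m}=\beta_{1}q^{m}+\epsilon$ with $\epsilon=\epsilon_{2}/(1-q)$, each $f_C^{J}(l)-\bar f_C^{J}(l)$ has vanishing $\beta_{1}$-part, so the telescoped difference $\prod f-\prod\bar f$ carries an overall factor $\epsilon$; after a further cancellation across shifted index sequences $J$, what remains is shown to vanish, for each fixed $J$, as a genuine polynomial identity in the $Y_i$'s --- and that step \emph{is} done by computing the coefficient of each monomial $\prod_{i\in E}Y_i$ in the manner of Proposition~\ref{Coeff}. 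So, contrary to your final paragraph, monomial-matching is precisely the endgame; what makes it available is the step you are missing, namely combining the two partial sums and extracting the $\epsilon$-factor first.
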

\begin{proof}
We have
\begin{align}\label{rf}
    &M^{\mu}_{n+1}(b)-(Y_{n+a+b}M^{\mu}_{n+1}(b-1)+M^{(\mu_1-1,\cdots,\mu_a-1)}_{n}(b))\\
    =&\sum_{\substack{B \subseteq \{0,\cdots,n+a+b\}\\|B|=b, (n+a+b)\in B}}(m^{\mu}_{n+1}(B)-Y_{n+a+b}m^{\mu}_{n+1}(B-\{n+a+b\}))\nonumber \\&+ \sum_{\substack{B \subseteq \{0,\cdots,n+a+b-1\}\\|B|=b}}(m^{\mu}_{n+1}(B)-m^{(\mu_1-1,\cdots,\mu_a-1)}_{n}(B)).\nonumber
\end{align}

We may assume $Y_n=\beta_1 q^n+\epsilon$ after the change of variables, $\beta_1=\beta-\frac{\epsilon_2}{1-q}$ and $\epsilon=\frac{\epsilon_2}{1-q}$ . For a set $C\subseteq\{0,\cdots,n+b-1\}$ such that $|C|=b-k$ and an increasing integer sequence $J=(j_1,\cdots,j_k)$ such that $1\leq j_1,\cdots,j_k\leq a+1$, we define
\begin{align*}
    &f_{C}^{J}(l)=\begin{cases*}
    q^{(n+b+a+1-j_l)-C(\mu_{j_l}+l)}Y_{C(\mu_{j_l}+l)} & if $j_l \neq a+1$ \\
    Y_{n+b} & if $j_l=a+1$
    \end{cases*}\\
    &d_{C}^{J}(k)=\begin{cases*}
    (n+b+a+1-j_l)-C(\mu_{j_l}+l) & if $j_l \neq a+1$ \\
    0 & if $j_l=a+1$
    \end{cases*}.
\end{align*}
We also define
\begin{align*}
    &\bar{f}_{C}^{J}(l)=\begin{cases*}
    =q^{(n+b+a+1-j_l)-C(\mu_{j_l-1}+l-1)}Y_{C(\mu_{j_l-1}+l-1)} & if $j_l \neq 1$ \\
    Y_{n+a+b} & if $j_l=1$
    \end{cases*}\\
    &\bar{d}_{C}^{J}(k)=\begin{cases*}
    (n+b+a+1-j_l)-C(\mu_{j_l-1}+l-1) & if $j_l \neq 1$ \\
    0 & if $j_l=1$
    \end{cases*}.
\end{align*}
Then by Definition \ref{wei} and Remark \ref{ind}, for $B=C\cup\{n+b+a+1-j_k<\cdots<n+b+a+1-j_1\}$, we have 
\begin{align*}
    &m^{\mu}_{n+1}(B)=(\prod\limits_{i \in C} Y_i)(f_{C}^{J}(k)\cdots f_{C}^{J}(1)) \\
    &Y_{n+b+a}m^{\mu}_{n+1}(B-\{n+b+a\})=(\prod\limits_{i \in C} Y_i)(\bar{f}_{C}^{J}(k)\cdots \bar{f}_{C}^{J}(1)) \text{\hspace{4mm}if $(n+b+a)\in B$}\\
    &m^{(\mu_1-1,\cdots,\mu_a-1)}_{n}(B)=(\prod\limits_{i \in C} Y_i)(\bar{f}_{C}^{J}(k)\cdots\bar{f}_{C}^{J}(1)) \text{\hspace{23mm}if $(n+b+a)\notin B$}.
\end{align*} So \eqref{rf} can be written as follows
\begin{equation}\label{zz}\sum_{k\geq0}(\sum_{\substack{C\subseteq\{0,\cdots,n+b-1\}\\|C|=b-k}}(\prod_{i \in C} Y_i)(\sum_{\substack{J=(j_1,\cdots,j_k)\\1\leq j_1<\cdots<j_k\leq a+1}}(f_{C}^{J}(k)\cdots f_{C}^{J}(1)-\bar{f}_{C}^{J}(k)\cdots \bar{f}_{C}^{J}(1)))).
\end{equation}
We first rewrite the quantity
\begin{equation}\label{quan}
    \sum_{\substack{J=(j_1,\cdots,j_k)\\1\leq j_1<\cdots<j_k\leq a+1}}(f_{C}^{J}(k)\cdots f_{C}^{J}(1)-\bar{f}_{C}^{J}(k)\cdots \bar{f}_{C}^{J}(1))
\end{equation} as follows \begin{align}\label{eq11}
  &\sum_{\substack{J=(j_1,\cdots,j_k)\\1\leq j_1<\cdots<j_k\leq a+1}}(f_{C}^{J}(k)\cdots f_{C}^{J}(1)-\bar{f}_{C}^{J}(k)\cdots \bar{f}_{C}^{J}(1))\nonumber\\=&\sum_{\substack{J=(j_1,\cdots,j_k)\\1\leq j_1<\cdots<j_k\leq a+1}}(\sum_{l=1}^{k}\bar{f}_{C}^{J}(k)\cdots\bar{f}_{C}^{J}(k-l+2)\left(f_{C}^{J}(k-l+1)-\bar{f}_{C}^{J}(k-l+1)\right)f_{C}^{J}(k-l)\cdots f_{C}^{J}(1))\nonumber\\=&\sum_{\substack{J=(j_1,\cdots,j_k)\\1\leq j_1<\cdots<j_k\leq a+1}}(\sum_{l=1}^{k}\bar{f}_{C}^{J}(k)\cdots\bar{f}_{C}^{J}(k-l+2)\left(q^{d_{C}^{J}(k-l+1)}\epsilon-q^{\bar{d}_{C}^{J}(k-l+1)}\epsilon\right)f_{C}^{J}(k-l)\cdots f_{C}^{J}(1))  \nonumber\\=&\epsilon\sum_{l=1}^{k}(\sum_{\substack{J=(j_1,\cdots,j_k)\\1\leq j_1<\cdots<j_k\leq a+1}}\bar{f}_{C}^{J}(k)\cdots\bar{f}_{C}^{J}(k-l+2)(q^{d_{C}^{J}(k-l+1)})f_{C}^{J}(k-l)\cdots f_{C}^{J}(1)) \\&-\epsilon\sum_{l=1}^{k}(\sum_{\substack{J=(j_1,\cdots,j_k)\\1\leq j_1<\cdots<j_k\leq a+1}}\bar{f}_{C}^{J}(k)\cdots\bar{f}_{C}^{J}(k-l+2)(q^{\bar{d}_{C}^{J}(k-l+1)})f_{C}^{J}(k-l)\cdots f_{C}^{J}(1)). \nonumber     
\end{align}
For $J=(j_1,\cdots,j_k)$ such that $j_{k-l+1}=j_{k-l}+1$, we have$$\bar{f}_{C}^{J}(k-l+1)(q^{d_{C}^{J}(k-l)})=(q^{\bar{d}_{C}^{J}(k-l+1)})f_{C}^{J}(k-l)$$ since $C(\mu_{(j_{k-l+1}-1)}+k-l)=C(\mu_{j_{k-l}}+k-l).$ So we have \begin{align*}
    \bar{f}_{C}^{J}(k)\cdots\bar{f}_{C}^{J}(k-l+1)(q^{d_{C}^{J}(k-l)})f_{C}^{J}(k-l-1)\cdots f_{C}^{J}(1) \\=
    \bar{f}_{C}^{J}(k)\cdots\bar{f}_{C}^{J}(k-l+2)(q^{\bar{d}_{C}^{J}(k-l+1)})f_{C}^{J}(k-l)\cdots f_{C}^{J}(1).
\end{align*}
Applying this cancellation to \eqref{eq11}, it becomes \begin{align}\label{quan1}
    &\epsilon(\sum_{\substack{J=(j_1,\cdots,j_k)\\1\leq j_1<\cdots<j_k\leq {a+1}}} q^{d^{J}_{C}}(k)f_{C}^{J}(k-1)\cdots f_{C}^{J}(1))\\&+\epsilon\sum_{l=2}^{k}(\sum_{\substack{J=(j_1,\cdots,j_k)^{l-1}\\1\leq j_1<\cdots<j_k\leq a}}\bar{f}_{C}^{J}(k)\cdots\bar{f}_{C}^{J}(k-l+2)(q^{d_{C}^{J}(k-l+1)})f_{C}^{J}(k-l)\cdots f_{C}^{J}(1))\nonumber \\&-\epsilon\sum_{l=1}^{k-1}(\sum_{\substack{J=(j_1,\cdots,j_k)^{l}\\1\leq j_1<\cdots<j_k\leq a}}\bar{f}_{C}^{J}(k)\cdots\bar{f}_{C}^{J}(k-l+2)(q^{\bar{d}_{C}^{J}(k-l+1)})f_{C}^{J}(k-l)\cdots f_{C}^{J}(1))\nonumber\\&
    -\epsilon(\sum_{\substack{J=(j_1,\cdots,j_k)\\1\leq j_1<\cdots<j_k\leq {a+1}}} \bar{f}_{C}^{J}(k)\cdots \bar{f}_{C}^{J}(2) q^{\bar{d}^{J}_{C}}(1)),\nonumber
\end{align}
where $(j_1,\cdots,j_k)^{l}:=(j_1,\cdots,j_{k-l},(j_{k-l+1}+1),\cdots,j_k+1).$ We regard  $(j_1,\cdots,j_k)^{0}=(j_1,\cdots,j_k)$ by convention. Now define 
\begin{align*}
    W_{C}^{J}(l)=\bar{f}_{C}^{J'}(k)\cdots\bar{f}_{C}^{J'}(k-l+2)(q^{d_{C}^{J'}(k-l+1)})f_{C}^{J'}(k-l)\cdots f_{C}^{J'}(1),
\end{align*} where $J'=J^{l-1}$ and
\begin{align*}
   \bar{W}_{C}^{J}(l)=\bar{f}_{C}^{J'}(k)\cdots\bar{f}_{C}^{J'}(k-l+2)(q^{\bar{d}_{C}^{J'}(k-l+1)})f_{C}^{J'}(k-l)\cdots f_{C}^{J'}(1)),
\end{align*} where $J'=J^{l}$. We also define 
\begin{align*}
    &V_{C}^{J}=f_{C}^{J}(k)\cdots f_{C}^{J}(1),
    \\
     &\bar{V}_{C}^{J}=\bar{f}_{C}^{J'}(k+1)\cdots f_{C}^{J'}(2),
\end{align*}
where $J'=(1,J^k)$ and $k$ is a length of $J$. Since we have
\begin{align*}
    &\sum_{\substack{J=(j_1,\cdots,j_k)\\1\leq j_1<\cdots<j_k\leq {a+1}}} q^{d^{J}_{C}}(k)f_{C}^{J}(k-1)\cdots f_{C}^{J}(1)\\=&\sum_{\substack{J=(j_1,\cdots,j_k)\\1\leq j_1<\cdots<j_k\leq a}} q^{d^{J}_{C}}(k)f_{C}^{J}(k-1)\cdots f_{C}^{J}(1)+\sum_{\substack{J=(j_1,\cdots,j_k)\\1\leq j_1<\cdots<j_{k}=a+1}} f_{C}^{J}(k-1)\cdots f_{C}^{J}(1)\\
    =&\sum_{\substack{J=(j_1,\cdots,j_k)\\1\leq j_1<\cdots<j_k\leq a}} W_{C}^{J}(1)+\sum_{\substack{J'=(j_1,\cdots,j_{k-1})\\1\leq j_1<\cdots<j_{k-1}\leq a}} V_{C}^{J'},\\
    &\sum_{\substack{J=(j_1,\cdots,j_k)\\1\leq j_1<\cdots<j_k\leq {a+1}}} \bar{f}_{C}^{J}(k)\cdots \bar{f}_{C}^{J}(2) q^{\bar{d}^{J}_{C}}(1)\\=&\sum_{\substack{J=(j_1,\cdots,j_k)^k\\1\leq j_1<\cdots<j_k\leq a}} \bar{f}_{C}^{J}(k)\cdots \bar{f}_{C}^{J}(2) q^{\bar{d}^{J}_{C}}(1)+\sum_{\substack{J=(j_1,\cdots,j_k)\\1= j_1<\cdots<j_{k}\leq a+1}} \bar{f}_{C}^{J}(k)\cdots \bar{f}_{C}^{J}(2)
    \\=&\sum_{\substack{J=(j_1,\cdots,j_k)\\1\leq j_1<\cdots<j_k\leq a}} \bar{W}_{C}^{J}(k)+\sum_{\substack{J'=(j_1,\cdots,j_{k-1})\\1\leq j_1<\cdots<j_{k-1}\leq a}} \bar{V}_{C}^{J'},
\end{align*}
the quantity \eqref{quan1} can be written as 
\begin{align*}
    \epsilon(\sum_{\substack{J=(j_1,\cdots,j_k)\\1\leq j_1<\cdots<j_k\leq a}}\sum_{l=1}^{k}(W_{C}^{J}(l)-\bar{W}_{C}^{J}(l)))+\epsilon(\sum_{\substack{J'=(j_1,\cdots,j_{k-1})\\1\leq j_1<\cdots<j_{k-1}=a}} (V_{C}^{J}-\bar{V}_{C}^{J}).
\end{align*}
Plugging this to \eqref{zz} and dividing with $\epsilon$ yields
\begin{align}\label{quan2}
&\sum_{k\geq0}(\sum_{\substack{C\subseteq\{0,\cdots,n+b-1\}\\|C|=b-k}}(\prod_{i \in C} Y_i)(\sum_{\substack{J=(j_1,\cdots,j_k)\\1\leq j_1<\cdots<j_k\leq a}}\sum_{l=1}^{k}(W_{C}^{J}(l)-\bar{W}_{C}^{J}(l))))\nonumber\\&+
\sum_{k\geq0}(\sum_{\substack{D\subseteq\{0,\cdots,n+b-1\}\\|D|=b-k-1}}(\prod_{i \in D} Y_i)(\sum_{\substack{J=(j_1,\cdots,j_k)\\1\leq j_1<\cdots<j_k\leq a}}(V_D^{J}-\bar{V}_D^{J}))\nonumber\\
=&\sum_{\substack{J=(j_1,\cdots,j_k)\\1\leq j_1<\cdots<j_k\leq a}}(\sum_{\substack{C\subseteq\{0,\cdots,n+b-1\}\\|C|=b-k}}(\prod_{i \in C} Y_i)\sum_{l=1}^{k}(W_{C}^{J}(l)-\bar{W}_{C}^{J}(l))\\&+
\sum_{\substack{D\subseteq\{0,\cdots,n+b-1\}\\|D|=b-k-1}}(\prod_{i \in D} Y_i)((V_D^{J}-\bar{V}_D^{J})))\nonumber
.
\end{align}

We will show that the quantity 
\begin{equation}\label{quan3}\sum_{\substack{C\subseteq\{0,\cdots,n+b-1\}\\|C|=b-k}}(\prod_{i \in C} Y_i)\sum_{l=1}^{k}(W_{C}^{J}(l)-\bar{W}_{C}^{J}(l))\\+
\sum_{\substack{D\subseteq\{0,\cdots,n+b-1\}\\|D|=b-k-1}}(\prod_{i \in D} Y_i)((V_D^{J}-\bar{V}_D^{J}))\end{equation} vanishes as a polynomial in $Y_i$'s for every $J=(j_1,\cdots,j_k)$ with $1\leq j_1<\cdots<j_k\leq a$. Then it shows that \eqref{quan2} vanishes. We will denote $(\mu_{j_1},\cdots,\mu_{j_k})$ as $(\nu_{1}^{e_1},\cdots,\nu_{p}^{e_p})$ such that $\nu_1<\cdots<\nu_p$ and $e_i>0$. Then we have $W_{l}=\bar{W}_{l-1}$ if 
$l=(\sum\limits_{i=r+1}^{p} e_i+2),\cdots,(\sum\limits_{i=r}^{p} e_i)$ for $r=1,\cdots,p$. So \eqref{quan3} becomes 
\begin{align}\label{qu4}\sum_{\substack{C\subseteq\{0,\cdots,n+b-1\}\\|C|=b-k}}(\prod_{i \in C} Y_i)\sum_{r=1}^{p}(W_{C}^{J}(\sum\limits_{i=r+1}^{p} e_i+1)-\bar{W}_{C}^{J}(\sum\limits_{i=r}^{p} e_i))\\+
\sum_{\substack{D\subseteq\{0,\cdots,n+b-1\}\\|D|=b-k-1}}(\prod_{i \in D} Y_i)((V_D^{J}-\bar{V}_D^{J})).\nonumber\end{align}
Now we will take the coefficient of the monomial $\prod\limits_{i\in E} Y_i$ in the quantity. Denote the multiplicity of $\nu_i$ in $\lambda_E$ with $f_i$ (can be possibly zero) and corresponding elements of $E$ with $c_i,\cdots,(c_i+f_i-1)$. Let $d_{i,h}=(n+b+a-j_s)-(c_i+f_i)$ where $s=\sum\limits_{l=1}^{i-1}e_l+h$ for $1\leq h\leq e_i$. To get a monomial $\prod\limits_{i\in E} Y_i$ in $(\prod\limits_{i \in C} Y_i)(W_{C}^{J}(\sum\limits_{i=r+1}^{p} e_i+1)-\bar{W}_{C}^{J}(\sum\limits_{i=r}^{p} e_i))$ we should take $C$ as 
$$C=E-(\bigcup_{\substack{i=1\\i \neq r}}^{p} \{c_i+f_i-t_{i,e_i}<\cdots<c_i+f_i-t_{i,1}\})-\{c_r+f_r-t_{r,e_r-1}<\cdots<c_r+f_r-t_{r,1}\}$$
such that $1\leq t_{i,h} \leq f_i$. Then for such $C$ we have ($J'=J^{\sum\limits_{i=r+1}^{p}e_i}$)
\begin{align*}
    &f_C^{J'}(\sum\limits_{l=1}^{i-1}e_l+h)=q^{d_{i,h}+t_{i,h}+1}Y_{c_i+f_i-t_{i,h}} \text{\hspace{2mm}for $1\leq i\leq r-1$ and $1\leq h\leq e_i$}\\
     &f_C^{J'}(\sum\limits_{l=1}^{r-1}e_l+h)=q^{d_{r,h}+t_{r,h}+1}Y_{c_r+f_r-t_{r,h}} \text{\hspace{2mm}for $1\leq h\leq e_r-1$}\\
    &f_C^{J'}(\sum\limits_{l=1}^{r}e_l)=q^{d_{r,e_r}+1}Y_{c_r+f_r}\rightarrow d_C^{J'}(\sum\limits_{l=1}^{r}e_l)=d_{r,e_r}+1\\
     &\bar{f}_C^{J'}(\sum\limits_{l=1}^{i-1}e_l+h)=q^{d_{i,h}+t_{i,h}}Y_{c_i+f_i-t_{i,h}} \text{\hspace{2mm}for $r+1\leq i\leq p$ and $1\leq h\leq e_i$},
\end{align*}
which gives 
\begin{equation*} (\prod_{i \in C} Y_i)W_{C}^{J}(\sum\limits_{i=r+1}^{p} e_i+1)=q^{\sum\limits_{i,h}d_{i,h}}q^{\sum\limits_{l=1}^{r}e_l}q^{\sum\limits_{i,h} t_{i,h}}\prod\limits_{i\in E} Y_i.\end{equation*}
And we have ($J'=J^{\sum\limits_{i=r}^{p}e_i}$)
\begin{align*}
 &\bar{f}_C^{J'}(\sum\limits_{l=1}^{r-1}e_l+1)=q^{d_{r,1}+f_r+1}Y_{c_r-1}\rightarrow d_C^{J'}(\sum\limits_{l=1}^{r}e_l)=d_{r,1}+f_r+1\\
     &\bar{f}_C^{J'}(\sum\limits_{l=1}^{r-1}e_l+h+1)=q^{d_{r,h}+t_{r,h}}Y_{c_r+f_r-t_{r,h}} \text{\hspace{2mm}for $1\leq h\leq e_r-1$},
\end{align*}
which gives 
\begin{equation*} (\prod_{i \in C} Y_i)\bar{W}_{C}^{J}(\sum\limits_{i=r}^{p} e_i)=q^{\sum\limits_{i,h}d_{i,h}}q^{\sum\limits_{l=1}^{r-1}e_l}q^{(\sum\limits_{i,h} t_{i,h})+f_r+1}\prod\limits_{i\in E} Y_i.\end{equation*}
So we have
\begin{align*}
  &(\prod_{i \in C} Y_i)(W_{C}^{J}(\sum\limits_{i=r+1}^{p} e_i+1)-\bar{W}_{C}^{J}(\sum\limits_{i=r}^{p} e_i))\\=&q^{\sum\limits_{i,h}d_{i,h}}q^{\sum\limits_{l=1}^{r-1}e_l}q^{\sum\limits_{i,h} t_{i,h}}(q^{e_r}-q^{f_r+1})\prod\limits_{i\in E} Y_i.  
\end{align*}
Summing over all possible $t_{i,h}$'s we have  
\begin{align*}
&[\prod\limits_{i\in E} Y_i]\left(\sum\limits_{\substack{C\subseteq\{0,\cdots,n+b-1\}\\|C|=b-k}}(\prod_{i \in C} Y_i)(W_{C}^{J}(\sum\limits_{i=r+1}^{p} e_i+1)-\bar{W}_{C}^{J}(\sum\limits_{i=r}^{p} e_i))\right)\\=&
   q^{\sum\limits_{i,h}d_{i,h}}q^{\sum\limits_{l=1}^{r-1}e_l}q^{\binom{e_r}{2}}\binom{f_r}{e_r-1}_q (\prod\limits_{\substack{l=1 \\ l \neq r}}^{p}(q^{\binom{e_l+1}{2}}\binom{f_l}{e_l}_q))(q^{e_r}-q^{f_r+1})
   \\=& q^{\sum\limits_{i,h}d_{i,h}}q^{\sum\limits_{l=1}^{r-1}e_l} (\prod\limits_{l=1}^{p}(q^{\binom{e_l+1}{2}}\binom{f_l}{e_l}_q))(1-q^{e_r})= q^{\sum\limits_{i,h}d_{i,h}} (\prod\limits_{l=1}^{p}(q^{\binom{e_l+1}{2}}\binom{f_l}{e_l}_q))(q^{\sum\limits_{l=1}^{r-1}e_l}-q^{\sum\limits_{l=1}^{r}e_l}).
\end{align*}
Now summing over all $r=1,\cdots,p$ we have
\begin{align}\label{q3}
    &[\prod\limits_{i\in E} Y_i]\left(\sum\limits_{\substack{C\subseteq\{0,\cdots,n+b-1\}\\|C|=b-k}}(\prod_{i \in C} Y_i)\sum\limits_{r=1}^{p}(W_{C}^{J}(\sum\limits_{i=r+1}^{p} e_i+1)-\bar{W}_{C}^{J}(\sum\limits_{i=r}^{p} e_i))\right)\\
    =&q^{\sum\limits_{i,h}d_{i,h}} (\prod\limits_{l=1}^{p}(q^{\binom{e_l+1}{2}}\binom{f_l}{e_l}_q))(1-q^{\sum\limits_{l=1}^{p}e_l}).\nonumber
\end{align}
To get a monomial $\prod\limits_{i\in E} Y_i$ in $(\prod\limits_{i \in D} Y_i)((V_D^{J}-\bar{V}_D^{J})$, we should take $D$ as 
$$D=E-\bigcup_{\substack{i=1\\}}^{p} \{c_i+f_i-t_{i,e_i}<\cdots<c_i+m_i-t_{i,1}\}$$
such that $1\leq t_{i,h} \leq f_i$. Then for such $D$ we have 
\begin{align*}
    &f_D^{J}(\sum\limits_{l=1}^{i-1}e_l+h)=q^{d_{i,h}+t_{i,h}+1}Y_{c_i+f_i-t_{i,h}} \text{\hspace{2mm}for $1\leq i\leq p$ and $1\leq h\leq e_i$}\\
    &\bar{f}_D^{J'}(\sum\limits_{l=1}^{i-1}e_l+h+1)=q^{d_{i,h}+t_{i,h}}Y_{c_i+f_i-t_{i,h}} \text{\hspace{2mm}for $1\leq i\leq p$ and $1\leq h\leq e_i$},
\end{align*}
where $J'=(0,J^k)$. So we have
\begin{align*}
    &(\prod_{i \in D} Y_i)V_{C}^{J}=q^{\sum\limits_{i,h}d_{i,h}}q^{\sum\limits_{l=1}^{p}e_l}q^{\sum\limits_{i,h} t_{i,h}}\prod\limits_{i\in E} Y_i
    \\
    &(\prod_{i \in D} Y_i)\bar{V}_{C}^{J}=q^{\sum\limits_{i,h}d_{i,h}}q^{\sum\limits_{i,h} t_{i,h}}\prod\limits_{i\in E} Y_i,
\end{align*} 
which gives 
\begin{align*}
  (\prod_{i \in D} Y_i)(V_{C}^{J}-\bar{V}_{C}^{J})=q^{\sum\limits_{i,h}d_{i,h}}q^{\sum\limits_{i,h} t_{i,h}}(q^{\sum\limits_{l=1}^{p}e_l}-1)\prod\limits_{i\in E} Y_i.  
\end{align*}
Summing over all possible $t_{i,h}$'s we have
\begin{align}\label{q5}
    &[\prod\limits_{i\in E} Y_i]\left(\sum_{\substack{D\subseteq \{0,\cdots,n+b-1\}\\|D|=b-k-1}}(\prod\limits_{i \in D} Y_i)((V_D^{J}-\bar{V}_D^{J})\right)\\=&q^{\sum\limits_{i,h}d_{i,h}} (\prod\limits_{l=1}^{p}(q^{\binom{e_l+1}{2}}\binom{f_l}{e_l}_q))(q^{\sum\limits_{l=1}^{p}e_l}-1).\nonumber
\end{align}
Adding \eqref{q3} and \eqref{q5}, we see that the coefficient vanishes. 
\end{proof} 
Next, we generalize \eqref{q1q1} (Lemma \ref{t}). Before stating and proving the generalization, we prepare with a definition and a lemma. 
\begin{Def}
For $\nu=(\tau_{1}^{e_1},\cdots,\tau_{p}^{e_p})$ such that $-1\leq\tau_{1}<\cdots<\tau_{p}$ and $e_i>0$, we define 
\begin{align*}
\nu(i) =
    \begin{cases*}
    (i,\tau_{1}^{e_1},\cdots,\tau_{p}^{e_p}) & if $-1\leq i\leq \tau_{1}-1$ \\
      (\tau_{1}^{e_1},\cdots,\tau_{l}^{e_l},i,\tau_{l+1}^{e_{l+1}},\cdots,\tau_{p}^{e_p}) & if $\tau_l\leq i\leq \tau_{l+1}-1$ \\
      (\tau_{1}^{e_1},\cdots,\tau_{p}^{e_p},i) & if $ \tau_{p}\leq i$
    \end{cases*}.
\end{align*}
\end{Def}
For example, if $\nu=(-1,-1,1)$, we have $\nu(-1)=(-1,-1,-1,1)$, $\nu(0)=(-1,-1,0,1)$, $\nu(1)=(-1,-1,1,1)$ and $\nu(2)=(-1,-1,1,2)$.

\begin{Lemma}\label{xx}
For $\nu=(\nu_1,\cdots,\nu_{a-1})$ such that $0\leq\nu_1\leq\cdots\leq\nu_{a-1}\leq n+1$, we have
$$M_{n}^{(-1,\nu-1)}(b)=q^{n+a+b}Y_{-1}M_{n+1}^{\nu}(b-1)+M_{n}^{\nu-1}(b).$$
\end{Lemma}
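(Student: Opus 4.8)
The plan is to prove the identity directly from the definition of $M_n^{\mu}(b)$ (Definition \ref{wei}), by splitting the defining sum for $M_n^{(-1,\nu-1)}(b)$ according to whether the largest admissible index, $n+a+b-1$, lies in the set $B$. Write $\mu=(-1,\nu-1)=(\mu_1,\dots,\mu_a)$, so that $\mu_1=-1$ and $\mu_{i+1}=\nu_i-1$ for $i=1,\dots,a-1$; since $0\le\nu_1\le\dots\le\nu_{a-1}\le n+1$, this is a weakly increasing composition with all entries in $\{-1,\dots,n\}$, so $M_n^{\mu}(b)$ is defined and the sum runs over $B\subseteq\{0,\dots,n+a+b-1\}$ with $|B|=b$. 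The relevant ``window'' is $\{n+b,\dots,n+b+a-1\}$, whose top element $n+a+b-1$ corresponds to the index $j=1$ in Definition \ref{wei}, and the ``low region'' is $\{0,\dots,n+b-1\}$.

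First I would collect the sets $B$ with $n+a+b-1\notin B$, i.e.\ $B\subseteq\{0,\dots,n+a+b-2\}$. For such $B$ every window index $j_l$ occurring in $m_n^{(-1,\nu-1)}(B)$ satisfies $j_l\ge 2$, since $j=1$ is excluded; moreover $B\cap\{n+b,\dots,n+b+a-1\}=B\cap\{n+b,\dots,n+b+a-2\}$. Viewing $\nu-1=(\mu_2,\dots,\mu_a)$ as an $(a-1)$-component composition at level $n$ and relabelling the window indices by $j_l\mapsto j_l-1$ (the window size $k$ and the low region are unchanged), one checks that every factor $q^{(n+b+a-j_l)-B(\mu_{j_l}+l)}Y_{B(\mu_{j_l}+l)}$ and the prefactor $\prod_{i\in B\cap\{0,\dots,n+b-1\}}Y_i$ coincide with the corresponding factors of $m_n^{\nu-1}(B)$. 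Hence $m_n^{(-1,\nu-1)}(B)=m_n^{\nu-1}(B)$ for every such $B$, and summing over them gives exactly $M_n^{\nu-1}(b)$.

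Next I would treat the sets $B$ with $n+a+b-1\in B$, writing $B=B'\sqcup\{n+a+b-1\}$ with $B'\subseteq\{0,\dots,n+a+b-2\}$, $|B'|=b-1$. Now $n+a+b-1$ is the top window element, so $j_1=1$, and its contribution to $m_n^{(-1,\nu-1)}(B)$ is the $l=1$ factor $q^{(n+b+a-1)-B(\mu_1+1)}Y_{B(\mu_1+1)}=q^{(n+b+a-1)-B(0)}Y_{B(0)}=q^{n+a+b}Y_{-1}$, using $\mu_1=-1$, $B(0)=-1$, and the convention $Y_{-1}=q^{-1}(\beta-\epsilon_2)$. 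The remaining factors, for $l=2,\dots,k$, only involve values $B(\mu_{j_l}+l)$ with $\mu_{j_l}+l\le n+k$, hence, by Remark \ref{ind}, with $B(\mu_{j_l}+l)\le n+b-1<n+a+b-1$, so these values are unchanged on passing from $B$ to $B'$; likewise $\prod_{i\in B\cap\{0,\dots,n+b-1\}}Y_i=\prod_{i\in B'\cap\{0,\dots,n+b-1\}}Y_i$. Relabelling $\nu$ as an $(a-1)$-component composition at level $n+1$ (its window is again $\{n+b,\dots,n+b+a-2\}$ and its low region again $\{0,\dots,n+b-1\}$), and matching window indices by $j_{l+1}\mapsto j_{l+1}-1$, one identifies the product over $l=2,\dots,k$ with $m_{n+1}^{\nu}(B')$, so $m_n^{(-1,\nu-1)}(B)=q^{n+a+b}Y_{-1}\,m_{n+1}^{\nu}(B')$. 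Summing over $B'$ yields $q^{n+a+b}Y_{-1}M_{n+1}^{\nu}(b-1)$, and adding the two cases completes the proof.

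The hard part will be purely organizational: keeping straight the three conventions in play (an $a$-component composition at level $n$, an $(a-1)$-component composition at level $n$, and an $(a-1)$-component composition at level $n+1$) and verifying that, under the relabellings $j_l\mapsto j_l-1$, the window, the low region, and the functions $B(\cdot)$ all line up. The one genuinely substantive point is that deleting the single large element $n+a+b-1$ from $B$ does not disturb any value $B(m)$ that actually occurs in the weight; this is exactly where Remark \ref{ind} (equivalently, the bound $B(1),\dots,B(n+k)\le n+b-1$) is used. No idea beyond this careful matching appears to be needed.
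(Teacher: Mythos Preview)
Your proposal is correct and follows exactly the same approach as the paper: the paper's proof consists of the two one-line claims $m_{n}^{(-1,\nu-1)}(B\cup\{n+a+b-1\})=q^{n+a+b}Y_{-1}m_{n+1}^{\nu}(B)$ and $m_{n}^{(-1,\nu-1)}(B)=m_{n}^{\nu-1}(B)$ for $B\subseteq\{0,\dots,n+a+b-2\}$, which are precisely your two cases. Your write-up simply unpacks the index relabellings and the use of Remark~\ref{ind} that the paper leaves implicit.
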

\begin{proof}
We have $m_{n}^{(-1,\nu-1)}(B\cup\{n+a+b-1\})=q^{n+a+b}Y_{-1}m_{n+1}^{\nu}(B)$ for $B\subseteq\{0,\cdots,n+a+b-2\}$ with $|B|=b-1$ and $m_{n}^{(-1,\nu-1)}(B)=m_{n}^{\nu-1}(B)$ for $B\subseteq\{0,\cdots,n+a+b-2\}$ with $|B|=b$.
\end{proof}
\begin{Lemma}\label{t}
For $\nu=(\tau_1^{e_1},\cdots,\tau_p^{e_p})$ such that $0\leq\tau_1<\cdots<\tau_p\leq n$ and $e_i>0$, we have
\begin{align}\label{first}
    [n+b+1+\sum\limits_{i=1}^{p}e_i]_q M_{n}^{\nu}(b)=&(\sum\limits_{i=1}^{p}q^{(\tau_i+\sum\limits_{l=1}^{i-1}e_l)}[e_i+1]_q M_{n}^{\nu(\tau_i)}(b))+\sum\limits_{l=0}^{\tau_1-1}q^{l}M_{n}^{\nu(l)}(b)\\&+\sum\limits_{i=1}^{p-1}(\sum\limits_{l=\tau_i+1}^{\tau_{i+1}-1}q^{(\sum\limits_{j=1}^{i}e_j)+l}M_{n}^{\nu(l)}(b))+\sum\limits_{l=\tau_p+1}^{n}q^{(\sum\limits_{j=1}^{p}e_j)+l} M_{n}^{\nu(l)}(b)\nonumber .
\end{align}
For $\nu=((-1)^{e_1},\tau_2^{e_2},\cdots,\tau_p^{e_p})$ such that $0\leq\tau_2<\cdots<\tau_p\leq n$ and $e_i>0$, we have
\begin{align}\label{second}
    [n+b+1+\sum\limits_{i=1}^{p}e_i]_q M_{n}^{\nu}(b)=&(\sum\limits_{i=2}^{p}q^{(\tau_i+\sum\limits_{l=1}^{i-1}e_l)}[e_i+1]_q M_{n}^{\nu(\tau_i)}(b))+[e_1]_q M_{n}^{\nu(-1)}(b)\\&+\sum\limits_{i=1}^{p-1}(\sum\limits_{l=\tau_i+1}^{\tau_{i+1}-1}q^{(\sum\limits_{j=1}^{i}e_j)+l}M_{n}^{\nu(l)}(b))+\sum\limits_{l=\tau_p+1}^{n}q^{(\sum\limits_{j=1}^{p}e_j)+l} M_{n}^{\nu(l)}(b)\nonumber .
\end{align}
\end{Lemma}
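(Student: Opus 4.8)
The plan is to prove Lemma \ref{t} by reducing both identities \eqref{first} and \eqref{second} to the already-established generalization of \eqref{zzzz}, namely Lemma \ref{ana}, together with the auxiliary Lemma \ref{xx}. The identity \eqref{q1q1} that we are generalizing has a standard one-line proof from the Pascal-type recurrence \eqref{zzzz} by iterating it; I expect the same strategy to work here, so the first thing I would do is rewrite the left-hand factor $[n+b+1+\sum_i e_i]_q$ as a telescoping sum. Concretely, writing $s=\sum_{i=1}^p e_i$, I would use $[n+b+1+s]_q = \sum_{j} q^{(\cdots)}$ split according to the ``slots'' $\{-1,0,\dots,n\}$ relative to the parts $\tau_1<\dots<\tau_p$ of $\nu$, so that each term of the sum is designed to match exactly one $M_n^{\nu(l)}(b)$ or $M_n^{\nu(\tau_i)}(b)$ appearing on the right-hand side. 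The point of the definition of $\nu(i)$ just introduced is precisely to enumerate these insertions, and the powers of $q$ in \eqref{first}–\eqref{second} are the bookkeeping for where the new part lands.

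The technical heart is then to verify, term by term after taking the coefficient of a fixed monomial $\prod_{i\in E}Y_i$, that the two sides agree. For this I would invoke Proposition \ref{Coeff}, which gives a closed product formula $\prod_i q^{k_i(d_i+k_i-1)}\binom{e_i}{k_i}_q\binom{f_i}{k_i}_q$ for $[\prod_{i\in E}Y_i]M_n^{\mu}(b)$. Inserting a new part at $\tau_i$ changes $e_i \mapsto e_i+1$ (hence the factor $[e_i+1]_q$ on the right) and shifts the exponents $d_j$ in a controlled way; inserting at a slot $l$ not occupied by $\nu$ introduces a new block with multiplicity one. So after extracting the coefficient of $\prod_{i\in E}Y_i$, both sides become explicit sums of products of $q$-binomials, and the required identity reduces to a $q$-binomial identity that should follow from iterating \eqref{zzzz} (equivalently, from the $q$-analogue of the hockey-stick / Pascal recurrence). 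I would organize this computation so that the ``$\tau_i$ terms'' and the ``slot $l$ terms'' are handled uniformly — a single application of Lemma \ref{ana} at the appropriate index $n$ peels off one layer, and induction on $b$ (or on $\sum e_i$) closes the argument.

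For the second identity \eqref{second}, where $\nu$ begins with a block of $-1$'s, the extra ingredient is Lemma \ref{xx}, which handles exactly the insertion of a $-1$: it says $M_n^{(-1,\nu-1)}(b) = q^{n+a+b}Y_{-1}M_{n+1}^{\nu}(b-1) + M_n^{\nu-1}(b)$, i.e. it trades the boundary term $[e_1]_q M_n^{\nu(-1)}(b)$ for something expressible at level $n+1$. So my approach to \eqref{second} is to first apply Lemma \ref{xx} to strip the $-1$-block down, then apply \eqref{first} (already proved) to the resulting composition of nonnegative parts, and finally reassemble. The main obstacle I anticipate is purely combinatorial bookkeeping: keeping the indices $n$, $b$, $a=\sum e_i$, and the exponent offsets $\sum_{l=1}^{i-1}e_l$ consistent across the repeated applications of Lemma \ref{ana}, since a small misalignment in where the inserted part sits will throw off the power of $q$. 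I would therefore prove \eqref{first} by induction on $p$ (the number of distinct parts), using Lemma \ref{ana} to remove the largest part $\tau_p$ and reduce $p$ by one, so that the delicate exponent tracking only has to be checked one layer at a time rather than all at once.
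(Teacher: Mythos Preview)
Your proposal mixes two distinct plans for \eqref{first}, and only one of them is viable. The coefficient-extraction approach via Proposition \ref{Coeff} is exactly what the paper does: it fixes a monomial $\prod_{i\in E}Y_i$, writes both sides as sums over tuples $(k_1,\dots,k_p)$ of products $\prod_i q^{k_i(d_i+k_i-1)}\binom{e_i}{k_i}_q\binom{f_i}{k_i}_q$, and verifies equality slot by slot. Be aware, though, that this does \emph{not} reduce to iterating \eqref{zzzz} alone. The terms with $i\neq\tau_l$ match by a simple check, but the $i=\tau_l$ terms require the nontrivial identity
\[
[e_l+1]_q\binom{e_l+1}{k_l}_q\binom{f_{\tau_l}}{k_l}_q=[e_l+k_l+1]_q\binom{e_l}{k_l}_q\binom{f_{\tau_l}}{k_l}_q+q^{e_l-k_l+1}[f_{\tau_l}-k_l+1]_q\binom{e_l}{k_l-1}_q\binom{f_{\tau_l}}{k_l-1}_q,
\]
because inserting a new part at $\tau_l$ couples with the summation index $k_l$ in a way that a single Pascal step does not capture. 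You need to isolate and verify this identity; the paper does so explicitly.

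Your alternative organization --- ``induction on $p$ using Lemma \ref{ana} to remove the largest part $\tau_p$'' --- does not work as stated. Lemma \ref{ana} does not remove a part: it trades $M_{n+1}^{\mu}(b)$ for $Y_{n+a+b}M_{n+1}^{\mu}(b-1)+M_n^{\mu-1}(b)$, i.e.\ it decreases $b$ or shifts \emph{every} part of $\mu$ down by one while decreasing $n$. Neither move reduces the number of distinct parts, so you never arrive at a smaller $p$. If you iterate, you get a proliferation of terms with various shifts and values of $b$, none of which isolate $\tau_p$. Drop this line and commit to the coefficient computation.

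Your plan for \eqref{second} is correct and matches the paper: use Lemma \ref{xx} to peel one $(-1)$ off the front, invoke \eqref{first} for the shifted composition $\nu+1$ at level $n+1$, and induct on the multiplicity $e_1$ of $(-1)$ (not on $p$). The recombination step is straightforward once the exponents are aligned via the relations $\nu'(i)=(-1,\nu(i))$ and $(\nu+1)(i)=\nu(i-1)+1$.
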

\begin{proof}
We first prove \eqref{first}. We will show that the equality holds as a polynomial in $Y_i$'s. Consider a length $b$ integer vector $\mu=(0^{f_0},\cdots,n^{f_n})$ such that $f_i\geq 0$, then we will compare the coefficients of $Y_\mu$($:=\prod\limits_{i=1}^{b}Y_{\mu_i+i-1}$) in both sides of \eqref{first}. First define the following \begin{align*}
g^{(k_1,\cdots,k_p)}(i) =
    \begin{cases*}
    q^{i}+q^{n+(\sum\limits_{j=i+1}^{n}f_j)+(\sum\limits_{j=1}^{p}e_j)+1}[f_i]_q & if $0\leq i\leq \tau_{1}-1$ \\
      q^{i+(\sum\limits_{j=1}^{l}k_j+(\sum\limits_{j=1}^{l}e_j)}+q^{n+(\sum\limits_{j=i+1}^{n}f_j)+(\sum\limits_{j=1}^{p}e_j)+(\sum\limits_{j=1}^{l}k_j)+1}[f_i]_q & if $\tau_l+1\leq i\leq \tau_{l+1}-1$ \\
      q^{i+(\sum\limits_{j=1}^{p}k_j+(\sum\limits_{j=1}^{p}e_j)}+q^{n+(\sum\limits_{j=i+1}^{n}f_j)+(\sum\limits_{j=1}^{p}e_j)+(\sum\limits_{j=1}^{p}k_j)+1}[f_i]_q  & if $ \tau_{p}+1\leq i\leq n$
      \\
      q^{\tau_l+(\sum\limits_{j=1}^{l-1}e_j)+(\sum\limits_{j=1}^{l-1}k_j)}[e_l+k_l+1]_q\\+q^{n+(\sum\limits_{j=\tau_l+1}^{n}f_j)+(\sum\limits_{j=1}^{p}e_j)+(\sum\limits_{j=1}^{l}k_j)+1}[f_{\tau_l}-k_l]_q  & if $ i=\tau_l$
    \end{cases*}
\end{align*}
\begin{align*}
d(i) =
    \begin{cases*}
    n+b+(\sum\limits_{j=1}^{p}e_j)-(i+(\sum\limits_{j=0}^{i}f_j)-1) & if $0\leq i\leq \tau_{1}-1$ \\
      n+b+(\sum\limits_{j=l+1}^{p}e_j)-(i+(\sum\limits_{j=0}^{i}f_j)-1) & if $\tau_l\leq i\leq \tau_{l+1}-1$ \\
     n+b-(i+(\sum\limits_{j=0}^{i}f_j)-1)  & if $ \tau_{p}\leq i\leq n$
      \end{cases*},
\end{align*}
where $k_i$ is an integer from 0 to $min(e_i,f_{\tau_i})$. Then we have \begin{equation}\label{whw}
    \sum\limits_{i=0}^{n}g^{(k_1,\cdots,k_p)}(i)= [n+b+1+\sum\limits_{i=1}^{p}e_i]_q.
\end{equation} 
Next we will define the following
\begin{align*}
    S^{\nu}(k_1,\cdots,k_p)=q^{\sum\limits_{i=1}^{p}k_i(d(\tau_i)+k_i-1)}\prod_{i=1}^{p}(\binom{e_i}{k_i}_q \binom{f_{\tau_i}}{k_i}_q)
\end{align*}
\begin{align*}
S^{\nu(i)}(k_1,\cdots,k_p) =
    \begin{cases*}
    q^{\sum\limits_{i=1}^{p}k_i(d(\tau_i)+k_i-1)}(\prod\limits_{i=1}^{p}(\binom{e_i}{k_i}_q \binom{f_{\tau_i}}{k_i}_q))(1+q^{d(i)}[f_i]_q)
     & if $0\leq i\leq \tau_{1}-1$ \\
      q^{\sum\limits_{i=1}^{l}k_i}q^{\sum\limits_{i=1}^{p}k_i(d(\tau_i)+k_i-1)}(\prod\limits_{i=1}^{p}(\binom{e_i}{k_i}_q \binom{f_{\tau_i}}{k_i}_q))(1+q^{d(i)}[f_i]_q) & if $\tau_l+1\leq i\leq \tau_{l+1}-1$ \\
      q^{\sum\limits_{i=1}^{p}k_i}q^{\sum\limits_{i=1}^{p}k_i(d(\tau_i)+k_i-1)}(\prod\limits_{i=1}^{p}(\binom{e_i}{k_i}_q \binom{f_{\tau_i}}{k_i}_q))(1+q^{d(i)}[f_i]_q) & if $ \tau_{p}+1\leq i\leq n$
      \\
       q^{\sum\limits_{i=1}^{l-1}k_i}q^{\sum\limits_{i=1}^{p}k_i(d(\tau_i)+k_i-1)}(\prod\limits_{\substack{i=1\\i \neq l}}^{p}(\binom{e_i}{k_i}_q \binom{f_{\tau_i}}{k_i}_q))\binom{e_l+1}{k_l}_q \binom{f_{\tau_l}}{k_l}_q& if $ i=\tau_l$
    \end{cases*}.
\end{align*}
By Proposition \ref{coeff}, we have
\begin{align*}
    [Y_{\mu}](M_{n}^{\nu}(b))=\sum_{k_1,\cdots,k_p}S^{\nu}(k_1,\cdots,k_p)\\
    [Y_{\mu}](M_{n}^{\nu(i)}(b))=\sum_{k_1,\cdots,k_p}S^{\nu(i)}(k_1,\cdots,k_p).
\end{align*}
Taking the coefficient of $Y_\mu$ in the left hand side of \eqref{first} and using \eqref{whw} gives
\begin{align*}
    (\sum_{k_1,\cdots,k_p}S^{\nu}(k_1,\cdots,k_p))[n+b+1+\sum\limits_{i=1}^{p}e_i]_q= \sum\limits_{i=0}^{n}(\sum_{k_1,\cdots,k_p}S^{\nu}(k_1,\cdots,k_p))g^{(k_1,\cdots,k_p)}(i)).
\end{align*}
It is straightforward to check the following
\begin{align*}
S^{\nu}(k_1,\cdots,k_p)g^{(k_1,\cdots,k_p)}(i) =
    \begin{cases*}
   q^{i}S^{\nu(i)}(k_1,\cdots,k_p) & if $0\leq i\leq \tau_{1}-1$ \\
       q^{(\sum\limits_{j=1}^{l}e_j)+i}S^{\nu(i)}(k_1,\cdots,k_p) & if $\tau_l+1\leq i\leq \tau_{l+1}-1$ \\
      q^{(\sum\limits_{j=1}^{p}e_j)+i}S^{\nu(i)}(k_1,\cdots,k_p)  & if $ \tau_{p}+1\leq i\leq n$
      \end{cases*},
\end{align*}
so it suffices to prove \begin{equation}\label{ahffk}
    \sum\limits_{k_1,\cdots,k_p}S^{\nu}(k_1,\cdots,k_p))g^{(k_1,\cdots,k_p)}(\tau_l)=[Y_{\mu}](q^{\tau_l+(\sum\limits_{i=1}^{l-1}e_i)}[e_l+1]_qM_{n}^{\nu(\tau_l)}(b)).
\end{equation}
We first claim the following identity
\begin{align}\label{tlqkf}
    q^{\tau_l+(\sum\limits_{j=1}^{l-1}e_j)}[e_l+1]_q S^{\nu(\tau_l)}(k_1,\cdots,k_p)=S^{\nu}(k_1,\cdots,k_p)( q^{\tau_l+(\sum\limits_{j=1}^{l-1}e_j)+(\sum\limits_{j=1}^{l-1}k_j)}[e_l+k_l+1])_q\\+S^{\nu}(k_1,\cdots,k_l-1,\cdots,k_p)(q^{n+(\sum\limits_{j=\tau_l+1}^{n}f_j)+(\sum\limits_{j=1}^{p}e_j)+(\sum\limits_{j=1}^{l}k_j)}[f_{\tau_l}-k_l+1]_q) \nonumber,
\end{align}
which is equivalent to the following (after cancelling a common factor)
\begin{align*}
    [e_l+1]_q \binom{e_l+1}{k_l}_q \binom{f_{\tau_l}}{k_l}_q=[e_l+k_l+1]_q \binom{e_l}{k_l}_q \binom{f_{\tau_l}}{k_l}_q\\+q^{e_l-k_l+1}[f_{\tau_l}-k_l+1]_q\binom{e_l}{k_l-1}_q \binom{f_{\tau_l}}{k_l-1}_q.
\end{align*}
And this can be checked by a direct computation. Now the left hand side of \eqref{ahffk} becomes \begin{align*}\label{eq6}
    &\sum\limits_{k_1,\cdots,k_p}S^{\nu}(k_1,\cdots,k_p)g^{(k_1,\cdots,k_p)}(\tau_l)\nonumber\\=&\sum\limits_{k_1,\cdots,k_p}S^{\nu}(k_1,\cdots,k_p))( q^{\tau_l+(\sum\limits_{j=1}^{l-1}e_j)+(\sum\limits_{j=1}^{l-1}k_j)}[e_l+k_l+1]_q+q^{n+(\sum\limits_{j=\tau_l+1}^{n}f_j)+(\sum\limits_{j=1}^{p}e_j)+(\sum\limits_{j=1}^{l}k_j)+1}[f_{\tau_l}-k_l]_q )
    \nonumber\\=&\sum\limits_{k_1,\cdots,k_p}S^{\nu}(k_1,\cdots,k_p)( q^{\tau_l+(\sum\limits_{j=1}^{l-1}e_j)+(\sum\limits_{j=1}^{l-1}k_j)}[e_l+k_l+1])_q\\&+\sum\limits_{k_1,\cdots,k_p}S^{\nu}(k_1,\cdots,k_l-1,\cdots,k_p)(q^{n+(\sum\limits_{j=\tau_l+1}^{n}f_j)+(\sum\limits_{j=1}^{p}e_j)+(\sum\limits_{j=1}^{l}k_j)}[f_{\tau_l}-k_l+1]_q ).\nonumber \\
    =&\sum\limits_{k_1,\cdots,k_p}(q^{\tau_l+(\sum\limits_{j=1}^{l-1}e_j)}[e_l+1]_q S^{\nu(\tau_l)}(k_1,\cdots,k_p)),
\end{align*}
where the last equality uses \eqref{tlqkf}. So this proves \eqref{ahffk}.

We will show \eqref{second} using an induction on $e_1$. The base case $e_1=0$ is same as \eqref{first}. Assume \eqref{second} holds for $\nu=((-1)^{e_1},\tau_2^{e_2},\cdots,\tau_p^{e_p})$. And let  $\nu'=(-1,\nu)$ and $\nu^{+}=\nu+1$, then we have
\begin{equation}\label{eq8}
   \nu'(i)=(-1,\nu(i)), \hspace{2mm} \nu^{+}(i)=\nu(i-1)+1.
\end{equation}
Writing \eqref{second} for $\nu$ and \eqref{first} for $\nu^{+}$, we have
\begin{align}\label{secondd}
    [n+b+1+\sum\limits_{i=1}^{p}e_i]_q M_{n}^{\nu}(b)=&(\sum\limits_{i=2}^{p}q^{(\tau_i+\sum\limits_{l=1}^{i-1}e_l)}[e_i+1]_q M_{n}^{\nu(\tau_i)}(b))+[e_1]_q M_{n}^{\nu(-1)}(b)\\&+\sum\limits_{i=1}^{p-1}(\sum\limits_{l=\tau_i+1}^{\tau_{i+1}-1}q^{(\sum\limits_{j=1}^{i}e_j)+l}M_{n}^{\nu(l)}(b))+\sum\limits_{l=\tau_p+1}^{n}q^{(\sum\limits_{j=1}^{p}e_j)+l} M_{n}^{\nu(l)}(b)\nonumber
    \end{align}
    \begin{align}
    &\label{seconddd}[n+b+1+\sum\limits_{i=1}^{p}e_i]_q M_{n+1}^{\nu^{+}}(b-1)\\=&\sum\limits_{i=2}^{p}q^{(\tau_i+\sum\limits_{l=1}^{i-1}e_l+1)}[e_i+1]_q M_{n+1}^{\nu^{+}(\tau_i+1)}(b-1))+[e_1+1]_q M_{n+1}^{\nu^{+}(0)}(b-1)\nonumber\\&+\sum\limits_{i=1}^{p-1}(\sum\limits_{l=\tau_i+1}^{\tau_{i+1}-1}q^{(\sum\limits_{j=1}^{i}e_j)+l+1}M_{n+1}^{\nu^{+}(l+1)}(b-1))+\sum\limits_{l=\tau_p+1}^{n}q^{(\sum\limits_{j=1}^{p}e_j)+l+1} M_{n+1}^{\nu^{+}(l+1)}(b-1)\nonumber .
\end{align}

Multiplying $q$ to \eqref{secondd}, multiplying $(q^{n+b+2+(\sum\limits_{i=1}^{p}e_i)}Y_{-1})$ to \eqref{seconddd} and adding these two we have 
\begin{align*}
    &q[n+b+1+\sum\limits_{i=1}^{p}e_i]_q (M_{n}^{\nu}(b)+q^{n+b+1+(\sum\limits_{i=1}^{p}e_i)}Y_{-1}M_{n+1}^{\nu^{+}}(b-1))\\=&\sum\limits_{i=2}^{p}q^{(\tau_i+\sum\limits_{l=1}^{i-1}e_l+1)}[e_i+1]_q (M_{n}^{\nu(\tau_i)}(b)+q^{n+b+2+(\sum\limits_{i=1}^{p}e_i)}Y_{-1}M_{n+1}^{\nu^{+}(\tau_i+1)}(b-1))\\&+q[e_1]_q (M_{n}^{\nu(-1)}(b)+q^{n+b+2+(\sum\limits_{i=1}^{p}e_i)}Y_{-1}M_{n+1}^{\nu^{+}(0)}(b-1))+q^{n+b+2+(\sum\limits_{i=1}^{p}e_i)}Y_{-1}M_{n+1}^{\nu^{+}(0)}(b-1)\\&+\sum\limits_{i=1}^{p-1}(\sum\limits_{l=\tau_i+1}^{\tau_{i+1}-1}q^{(\sum\limits_{j=1}^{i}e_j)+l+1}(M_{n}^{\nu(l)}(b)+q^{n+b+2+(\sum\limits_{i=1}^{p}e_i)}Y_{-1}M_{n+1}^{\nu^{+}(l+1)}(b-1))))\\&+\sum\limits_{l=\tau_p+1}^{n}q^{(\sum\limits_{j=1}^{p}e_j)+l+1} (M_{n}^{\nu(l)}(b)+q^{n+b+2+(\sum\limits_{i=1}^{p}e_i)}Y_{-1}M_{n+1}^{\nu^{+}(l+1)}(b-1)))\nonumber.
\end{align*}
By Lemma \ref{xx} and \eqref{eq8}, it becomes
\begin{align*}
    &q[n+b+1+\sum\limits_{i=1}^{p}e_i]_q M_{n}^{\nu'}(b)\\=&(\sum\limits_{i=2}^{p}q^{(\tau_i+\sum\limits_{l=1}^{i-1}e_l+1)}[e_i+1]_q M_{n}^{\nu'(\tau_i)}(b))+q[e_1]_q M_{n}^{\nu'(-1)}(b)+q^{n+b+2+(\sum\limits_{i=1}^{p}e_i)}Y_{-1}M_{n+1}^{\nu^{+}(0)}(b-1)\\&+\sum\limits_{i=1}^{p-1}(\sum\limits_{l=\tau_i+1}^{\tau_{i+1}-1}q^{(\sum\limits_{j=1}^{i}e_j)+l+1}M_{n}^{\nu'(l)}(b))+\sum\limits_{l=\tau_p+1}^{n}q^{(\sum\limits_{j=1}^{p}e_j)+l+1} M_{n}^{\nu'(l)}(b)\nonumber.
\end{align*}
Adding $M_{n}^{\nu'}(b)$($=M_{n}^{\nu(-1)}(b)$) to both sides gives \eqref{second} for $\nu'$.
\end{proof}
\subsection{Proof of Theorem \ref{main2}}\label{pt} For a weakly increasing composition $\mu=(\mu_1,\cdots,\mu_a)$, we define $X_{\mu}=\prod\limits_{i=1}^{a}X_{\mu_i+i-1}$. Then \eqref{main22} can be rephrased as follow 
\begin{equation}\label{ma}
    g_{n+k,n}=\sum\limits_{a+b=k}(\sum\limits_{\substack{\mu=(\mu_1,\cdots,\mu_a)\\0\leq\mu_1\leq \cdots\leq\mu_a\leq n}}X_{\mu}M_{n}^{\mu}(b)).
\end{equation}
Since we already know that Theorem \ref{main2} is true for $g_{k,0}$ (Example \ref{ex38}, Proposition \ref{prop1}), it suffices to prove that \eqref{ma} satisfies the recurrence relation
\begin{equation}\label{ff}g_{n+1+k,n+1}=g_{n+k,n}+(b_{n+k}) g_{n+k,n+1}-(\lambda_{n+k}) g_{n+k-1,n+1}.\end{equation}
We will show the identity
\begin{align}\label{dm}
    \sum\limits_{\substack{\mu=(\mu_1,\cdots,\mu_a)\\0\leq\mu_1\leq \cdots\leq\mu_a\leq n+1}}X_{\mu}M_{n+1}^{\mu}(b)=\sum\limits_{\substack{\mu=(\mu_1,\cdots,\mu_a)\\0\leq\mu_1\leq \cdots\leq\mu_a\leq n}}X_{\mu}M_{n}^{\mu}(b)+X_{n+a+b}(\sum\limits_{\substack{\mu=(\mu_1,\cdots,\mu_{a-1})\\0\leq\mu_1\leq \cdots\leq\mu_{a-1}\leq n+1}}X_{\mu}M_{n+1}^{\mu}(b))\\+Y_{n+a+b}(\sum\limits_{\substack{\mu=(\mu_1,\cdots,\mu_a)\\0\leq\mu_1\leq \cdots\leq\mu_a\leq n+1}}X_{\mu}M_{n+1}^{\mu}(b-1))-(\lambda_{n+a+b})(\sum\limits_{\substack{\mu=(\mu_1,\cdots,\mu_{a-1})\\0\leq\mu_1\leq \cdots\leq\mu_{a-1}\leq n+1}}X_{\mu}M_{n+1}^{\mu}(b-1)),\nonumber
\end{align}
which gives \eqref{ff} when summed over all possible $a$ and $b$ such that $a+b=k$. Using Lemma \ref{ana} and Lemma \ref{xx}, the identity \eqref{dm} becomes
\begin{align*}
    \leftrightarrow\nonumber \sum\limits_{\substack{\mu=(\mu_1,\cdots,\mu_a)\\0\leq\mu_1\leq \cdots\leq\mu_a\leq n+1}}X_{\mu}(M_{n+1}^{\mu}(b)-Y_{n+a+b}(M_{n+1}^{\mu}(b-1))=\sum\limits_{\substack{\mu=(\mu_1,\cdots,\mu_a)\\0\leq\mu_1\leq \cdots\leq\mu_a\leq n}}X_{\mu}M_{n}^{\mu}(b)\\\nonumber+X_{n+a+b}(\sum\limits_{\substack{\mu=(\mu_1,\cdots,\mu_{a-1})\\0\leq\mu_1\leq \cdots\leq\mu_{a-1}\leq n+1}}X_{\mu}(M_{n+1}^{\mu}(b)-Y_{n+a+b-1}(M_{n+1}^{\mu}(b))\\+(\alpha q^{n+a+b}Y_{-1})(\sum\limits_{\substack{\mu=(\mu_1,\cdots,\mu_{a-1})\\0\leq\mu_1\leq \cdots\leq\mu_{a-1}\leq n+1}}X_{\mu}M_{n+1}^{\mu}(b-1))\nonumber
    \end{align*}
    \begin{align*}
     \leftrightarrow\nonumber \nonumber\sum\limits_{\substack{\mu=(\mu_1,\cdots,\mu_a)\\0\leq\mu_1\leq \cdots\leq\mu_a\leq n+1}}X_{\mu}(M_{n}^{\mu-1}(b))=\sum\limits_{\substack{\mu=(\mu_1,\cdots,\mu_a)\\0\leq\mu_1\leq \cdots\leq\mu_a\leq n}}X_{\mu}M_{n}^{\mu}(b)\\+X_{n+a+b}(\sum\limits_{\substack{\mu=(\mu_1,\cdots,\mu_{a-1})\\0\leq\mu_1\leq \cdots\leq\mu_{a-1}\leq n+1}}X_{\mu}(M_{n}^{\mu-1}(b))+(\alpha q^{n+a+b}Y_{-1})(\sum\limits_{\substack{\mu=(\mu_1,\cdots,\mu_{a-1})\\0\leq\mu_1\leq \cdots\leq\mu_{a-1}\leq n+1}}X_{\mu}M_{n+1}^{\mu}(b-1))\nonumber
    \end{align*}
    \begin{align*}
    \leftrightarrow\nonumber \nonumber\sum\limits_{\substack{\mu=(\mu_1,\cdots,\mu_a)\\0\leq\mu_1\leq \cdots\leq\mu_a\leq n}}X_{\mu}(M_{n}^{\mu-1}(b))+\sum\limits_{\substack{\mu=(\mu_1,\cdots,\mu_{a-1},n+1)\\0\leq\mu_1\leq \cdots\leq\mu_{a-1}\leq n+1}}X_{\mu}(M_{n}^{\mu-1}(b))=\sum\limits_{\substack{\mu=(\mu_1,\cdots,\mu_a)\\0\leq\mu_1\leq \cdots\leq\mu_a\leq n}}X_{\mu}M_{n}^{\mu}(b)\\\nonumber+X_{n+a+b}(\sum\limits_{\substack{\mu=(\mu_1,\cdots,\mu_{a-1})\\0\leq\mu_1\leq \cdots\leq\mu_{a-1}\leq n+1}}X_{\mu}(M_{n}^{\mu-1}(b))+(\alpha q^{n+a+b}Y_{-1})(\sum\limits_{\substack{\mu=(\mu_1,\cdots,\mu_{a-1})\\0\leq\mu_1\leq \cdots\leq\mu_{a-1}\leq n+1}}X_{\mu}M_{n+1}^{\mu}(b-1))\nonumber\\
    \end{align*}
    \begin{align*}
    \leftrightarrow\nonumber\\
    \sum\limits_{\substack{\nu=(\nu_1,\cdots,\nu_{a-1})\\0\leq\nu_1\leq \cdots\leq\nu_{a-1}\leq n+1}}(X_{(\nu,n+1)}M_{n}^{(\nu,n+1)-1}(b)-X_{n+a+b}X_{\nu}M_{n}^{\nu-1}(b)-\alpha q^{n+a+b} Y_{-1} X_{\nu}M_{n+1}^{\nu}(b-1))\\
    =\sum\limits_{\substack{\mu=(\mu_1,\cdots,\mu_a)\\0\leq\mu_1\leq \cdots\leq\mu_a\leq n}}X_{\mu}(M_{n}^{\mu}(b)-M_{n}^{\mu-1}(b)).\nonumber
\end{align*}
\begin{align}\label{fff}
    \leftrightarrow\nonumber\\
    \sum\limits_{\substack{\nu=(\nu_1,\cdots,\nu_{a-1})\\0\leq\nu_1\leq \cdots\leq\nu_{a-1}\leq n+1}}(X_{(\nu,n+1)}M_{n}^{(\nu,n+1)-1}(b)-(X_{n+a+b}-X_0)X_{\nu}M_{n}^{\nu-1}(b)-X_0X_{\nu}M_{n}^{(-1,\nu-1)}(b))\\
    =\sum\limits_{\substack{\mu=(\mu_1,\cdots,\mu_a)\\0\leq\mu_1\leq \cdots\leq\mu_a\leq n}}X_{\mu}(M_{n}^{\mu}(b)-M_{n}^{\mu-1}(b)).\nonumber
\end{align}
We will show \eqref{fff} by showing its refinement (Proposition \ref{dx}).
\begin{Def}
For $\nu=(\tau_{1}^{e_1},\cdots,\tau_{p}^{e_p})$ such that $0\leq\tau_{1}<\cdots<\tau_{p}$ and $e_i>0$, we define 
\begin{align*}
\bar{\nu}(i) =
    \begin{cases*}
    (i,(\tau_{1}-1)^{e_1},\cdots,(\tau_{p}-1)^{e_p}) & if $-1\leq i\leq \tau_{1}-1$ \\
      (\tau_{1}^{e_1},\cdots,\tau_{l}^{e_l},i,(\tau_{l+1}-1)^{e_{l+1}},\cdots,(\tau_{p}-1)^{e_p}) & if $\tau_l\leq i\leq \tau_{l+1}-1$ \\
      (\tau_{1}^{e_1},\cdots,\tau_{p}^{e_p},i) & if $ \tau_{p}\leq i$
    \end{cases*}
\end{align*}
\end{Def}
\begin{Prop}\label{dx}
For $\nu=(\nu_1,\cdots,\nu_{a-1})$ such that $0\leq \nu_{1}\leq\cdots\leq \nu_{a-1}\leq n+1$, denoting $\nu^{-}=\nu-1$, we have
\begin{align}\label{dddd}
    X_{(\nu,n+1)}M_{n}^{(\nu,n+1)-1}(b)-(X_{n+a+b}-X_0)X_{\nu}M_{n}^{\nu-1}(b)-X_0X_{\nu}M_{n}^{(-1,\nu-1)}(b)\\
    =\sum\limits_{i=0}^{n} X_{\bar{v}(i)}(M_n^{\nu^{-}(i)}(b)-M_n^{\nu^{-}(i-1)}(b)).\nonumber
\end{align}
\end{Prop}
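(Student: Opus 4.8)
The plan is to prove Proposition~\ref{dx} by induction, peeling off the smallest block of $\nu$ and reducing each step to the recursions already in hand: Lemma~\ref{ana} (the analogue of~\eqref{zzzz}), Lemma~\ref{xx}, and Lemma~\ref{t} (the analogue of~\eqref{q1q1}, in its two forms~\eqref{first} and~\eqref{second}). Observe that~\eqref{dddd} is a polynomial identity in the $X_i$'s and $Y_i$'s whose only structural inputs are $X_{i+1}=qX_i+\epsilon_1$ and $Y_{i+1}=qY_i+\epsilon_2$ (which also determine $X_{-1}=q^{-1}(\alpha-\epsilon_1)$ and $Y_{-1}=q^{-1}(\beta-\epsilon_2)$), so it may be verified by comparing coefficients of monomials, exactly in the style of the proofs of Lemma~\ref{ana} and Lemma~\ref{t}.

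I would first dispose of the base case $\nu=()$, where $a=1$, $(\nu,n+1)=(n+1)$, $\nu-1=()$, $(-1,\nu-1)=(-1)$, and $\bar\nu(i)=\nu^{-}(i)=(i)$, so that~\eqref{dddd} reads
\[
X_{n+1}M_n^{(n)}(b)-(X_{n+1+b}-X_0)M_n^{()}(b)-X_0M_n^{(-1)}(b)=\sum_{i=0}^{n}X_i\bigl(M_n^{(i)}(b)-M_n^{(i-1)}(b)\bigr).
\]
Reindexing the right-hand side as $X_nM_n^{(n)}(b)-X_0M_n^{(-1)}(b)+\sum_{i=0}^{n-1}(X_i-X_{i+1})M_n^{(i)}(b)$ and using $X_{n+1}=qX_n+\epsilon_1$ and $X_i-X_{i+1}=(1-q)X_i-\epsilon_1$, this collapses---after eliminating the $M_n^{(i)}(b)$ by the single-block case of Lemma~\ref{t} and by Lemma~\ref{ana}---to a direct computation of the kind carried out for the base cases inside the proof of Lemma~\ref{t}.

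For the inductive step I would induct on the number of distinct parts of $\nu=(\tau_1^{e_1},\dots,\tau_p^{e_p})$ and peel off the block $\tau_1^{e_1}$. The key observation is that $\bar\nu(i)=\nu^{-}(i)$ whenever $0\le i\le\tau_1-1$, so the piece $\sum_{0\le i<\tau_1}X_{\bar\nu(i)}\bigl(M_n^{\nu^{-}(i)}(b)-M_n^{\nu^{-}(i-1)}(b)\bigr)$ of the right-hand side can be handled using only $X_{i+1}=qX_i+\epsilon_1$, while the remaining piece $\sum_{i\ge\tau_1}X_{\bar\nu(i)}(\cdots)$ is rewritten through the compositions $\nu^{-}(\tau_l)$ and the ``gap'' compositions $\nu^{-}(l)$ by applying Lemma~\ref{t} to the composition obtained from $\nu$ by deleting $\tau_1^{e_1}$, and then invoking the inductive hypothesis together with Lemma~\ref{ana} and Lemma~\ref{xx}. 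Alternatively, one can bypass the induction entirely and compare, for each admissible monomial $\prod_i Y_{e_i}$ with a prescribed $X$-content, the coefficients on the two sides via Proposition~\ref{Coeff} (extended to permit a $-1$ entry through the conventions above), whereupon~\eqref{dddd} splits into a finite family of elementary $q$-binomial identities checked by direct computation.

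The main obstacle is the bookkeeping of the $X$-monomials $X_{\bar\nu(i)}$: since $\bar\nu(i)$ leaves the portion of $\nu$ that is $\le i$ undecremented while decrementing the portion that is $>i$, the map $i\mapsto X_{\bar\nu(i)}$ is not a uniform shift of $i\mapsto X_{\nu^{-}(i)}$, so the right-hand side of~\eqref{dddd} does not telescope outright. One must therefore split the sum over $i$ at each value $\tau_l$ occurring in $\nu$ and at each gap, track how $\nu^{-}(i)$ changes (growing a new length-one block across a gap, and enlarging an existing block when $i$ meets a part of $\nu-1$), and match the resulting pieces against the three left-hand-side terms using~\eqref{zzzz}, \eqref{q1q1}, Lemma~\ref{ana} and Lemma~\ref{t}. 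This is routine but long, and accounts for essentially all of the remaining computation of the section.
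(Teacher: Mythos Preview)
Your proposal is workable in spirit but misses the key simplification that makes the paper's proof short and non-inductive. In the base case you already apply the right idea---Abel summation---to rewrite the right-hand side as a sum over $(X_i-X_{i+1})M_n^{(i)}(b)$. The paper does exactly this \emph{in general}: summing by parts turns
\[
\sum_{i=0}^{n} X_{\bar\nu(i)}\bigl(M_n^{\nu^{-}(i)}(b)-M_n^{\nu^{-}(i-1)}(b)\bigr)
\]
into boundary terms plus $\sum_{i=0}^{n}\bigl(X_{\bar\nu(i+1)}-X_{\bar\nu(i)}\bigr)M_n^{\nu^{-}(i)}(b)$, and the whole identity becomes
\[
(X_{n+a+b}-X_0)X_{\nu}M_{n}^{\nu^{-}}(b)=M_n^{\nu^{-}(-1)}(b)\bigl(X_{\bar\nu(0)}-X_{\nu}X_0\bigr)+\sum_{i=0}^{n}M_n^{\nu^{-}(i)}(b)\bigl(X_{\bar\nu(i+1)}-X_{\bar\nu(i)}\bigr).
\]
The point you call the ``main obstacle''---that $i\mapsto X_{\bar\nu(i)}$ is not a uniform shift---dissolves here, because the \emph{differences} $X_{\bar\nu(i+1)}-X_{\bar\nu(i)}$ are uniform: a short case check (whether $i$ lies in a gap of $\nu$ or equals some $\tau_l$) shows each difference is $X_\nu\cdot(X_{k+m}-X_k)=X_\nu\,q^{k}[m]_q(X_1-X_0)$ for the appropriate $k,m$. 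Dividing through by $X_\nu(X_1-X_0)$ leaves precisely the identity of Lemma~\ref{t} (form~\eqref{first} when $\nu^{-}$ has no part equal to $-1$, form~\eqref{second} when it does). No induction on the number of blocks, no appeal to Lemma~\ref{ana} or Lemma~\ref{xx}, and no coefficient extraction via Proposition~\ref{Coeff} is needed.

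So your approach---peeling off blocks and inducting, or alternatively comparing $Y$-coefficients---would likely go through, but it replicates work already packaged into Lemma~\ref{t} and is substantially longer than necessary. The short route is: Abel-sum, compute the $X$-differences, divide, and cite Lemma~\ref{t}.
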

\begin{proof}
    Note that we have 
    \begin{equation*}
        \nu^{-}(n)=(\nu,n+1)-1,\hspace{2mm} \nu^{-}(-1)=(-1,\nu-1), \hspace{2mm} \bar{\nu}(n+1)=(\nu,n+1).
    \end{equation*}
    So the identity \eqref{dddd} becomes 
    \begin{equation}\label{dmdmd}
    (X_{n+a+b}-X_0)X_{\nu}M_{n}^{\nu^{-}}(b)
    =M_n^{\nu^{-}(-1)}(b)(X_{\bar{v}(0)}-X_{\nu}X_0)+\sum\limits_{i=0}^{n}M_n^{\nu^{-}(i)}(b)(X_{\bar{v}(i+1)}-X_{\bar{v}(i)}).
    \end{equation}
    Writing $\nu^{-}=(\tau_1^{e_1},\cdots,\tau_p^{e_p})$ such that $-1\leq\tau_1<\cdots<\tau_p$ and $e_i>0$, we have 
     \begin{align*}
&X_{\bar{v}(0)}-X_{v}X_0=
    \begin{cases*}
   0 & if $\tau_1>-1$ \\
       X_{\nu}(X_{e_1}-X_0) & if $\tau_1=-1$
    \end{cases*}\\
&X_{\bar{v}(i+1)}-X_{\bar{v}(i)}=
    \begin{cases*}
    X_{\nu}(X_{i+1}-X_i) & if $0\leq i\leq \tau_{1}-1$ \\
       X_{\nu}(X_{k+1}-X_{k}) & if $\tau_l+1\leq i\leq \tau_{l+1}-1$ \text{and} $k=(\sum\limits_{j=1}^{l}e_j)+i$\\
      X_{\nu}(X_{k+1}-X_k) & if $ \tau_{p}+1\leq i\leq n$ \text{and} $k=(\sum\limits_{j=1}^{p}e_j)+i$
      \\
      X_{\nu}(X_{k+e_l+1}-X_k) & if $ i=\tau_l$ \text{and} $k=(\sum\limits_{j=1}^{l-1}e_j)+\tau_l$
    \end{cases*}.
\end{align*}
Since $X_{k+i}-X_k=q^{k}[i]_q (X_1-X_0)$, dividing \eqref{dmdmd} with $X_\nu(X_1-X_0)$, we have \eqref{first} or \eqref{second} for $\nu^{-}$ which was proved in Lemma \ref{t}.
\end{proof}

\textit{Proof of the identity \eqref{fff}.} By Proposition \ref{dx}, the left hand side of \eqref{fff} becomes
\begin{equation}\label{final}\sum\limits_{\substack{\nu=(\nu_1,\cdots,\nu_{a-1})\\0\leq\nu_1\leq \cdots\leq\nu_{a-1}\leq n+1}}\left(\sum\limits_{i=0}^{n} X_{\bar{v}(i)}(M_n^{\nu^{-}(i)}(b)-M_n^{\nu^{-}(i-1)}(b))\right),\end{equation}where $\nu^{-}=\nu-1$. For $\mu=(\mu_1,\cdots,\mu_a)$ such that  $0\leq \mu_1 \leq \cdots \leq\mu_a\leq n$, the term $X_{\mu}$ appears on \eqref{final} when $\nu=(\mu_1,\cdots,\mu_{l-1},\mu_{l+1}+1,\cdots,\mu_a+1)$ with the coefficient  
$(M_{n}^{(\mu_1-1,\cdots,\mu_{l-1}-1,\mu_l,\cdots,\mu_a)}(b)-M_{n}^{(\mu_1-1,\cdots,\mu_{l}-1,\mu_{l+1},\cdots,\mu_a)}(b))$ for $l=1,\cdots,a$. So \eqref{final} becomes
\begin{align*}
    &\sum\limits_{\substack{\mu=(\mu_1,\cdots,\mu_a)\\0\leq\mu_1\leq \cdots\leq\mu_a\leq n}}X_{\mu}(\sum\limits_{l=1}^{a}(M_{n}^{(\mu_1-1,\cdots,\mu_{l-1}-1,\mu_l,\cdots,\mu_a)}(b)-M_{n}^{(\mu_1-1,\cdots,\mu_{l}-1,\mu_{l+1},\cdots,\mu_a)}(b)))\\
    &=\sum\limits_{\substack{\mu=(\mu_1,\cdots,\mu_a)\\0\leq\mu_1\leq \cdots\leq\mu_a\leq n}}X_{\mu}(M_{n}^{\mu}(b)-M_{n}^{\mu-1}(b)).
\end{align*}

\hspace{145mm}$\square$
\section{Proof of Theorem \ref{2pos}}
We prepare with lemmas to prove Theorem \ref{2pos}. Recall that we write $f_1\succeq f_2$ if $(f_1-f_2)$ is a polynomial with positive coefficients. 

\begin{Lemma}\label{11}
Let $f_1,\cdots,f_i$ and $h_1,\cdots,h_i$ be polynomials such that $f_j \succeq h_j$ for all $j$ and $h_1,\cdots,h_{i-1} \geq 0$. Then $f_1\cdots f_i \succeq h_1\cdots h_i$. 
\end{Lemma}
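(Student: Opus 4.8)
I would prove the statement by induction on $i$. The base case $i=1$ is precisely the hypothesis $f_1\succeq h_1$, so suppose $i\ge 2$ and that the statement holds for any $i-1$ polynomials. Along the way I will use the evident closure properties of the relation $\succeq$: if $p\succeq 0$ and $r\succeq 0$ then $p+r\succeq 0$ and $pr\succeq 0$ (each coefficient of $pr$ being a sum of products of non-negative reals), and if $p\succeq q$ with $r\succeq 0$ then $pr\succeq qr$. In particular, since $h_1,\dots,h_{i-1}$ have non-negative coefficients, so does the product $h_1\cdots h_{i-1}$, i.e.\ $h_1\cdots h_{i-1}\succeq 0$.

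The heart of the argument is the decomposition
\[
f_1\cdots f_i-h_1\cdots h_i=\bigl(f_1\cdots f_{i-1}-h_1\cdots h_{i-1}\bigr)\,f_i+\bigl(h_1\cdots h_{i-1}\bigr)\bigl(f_i-h_i\bigr).
\]
For the first summand, the induction hypothesis applies to $f_1,\dots,f_{i-1}$ and $h_1,\dots,h_{i-1}$ — its hypotheses hold, as $f_j\succeq h_j$ for $j\le i-1$ and $h_1,\dots,h_{i-2}\succeq 0$ — so $f_1\cdots f_{i-1}-h_1\cdots h_{i-1}\succeq 0$, and multiplying by the non-negative polynomial $f_i$ keeps it $\succeq 0$. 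For the second summand, $h_1\cdots h_{i-1}\succeq 0$ and $f_i-h_i\succeq 0$ by hypothesis, so their product is $\succeq 0$. A sum of two polynomials that are $\succeq 0$ is again $\succeq 0$, whence $f_1\cdots f_i\succeq h_1\cdots h_i$, completing the induction.

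The only point that really needs thought — and the reason the hypotheses single out $h_1,\dots,h_{i-1}$ rather than all of the $h_j$ — is the choice of decomposition. One must route the ``error'' factor $f_i-h_i$ through the non-negative block $h_1\cdots h_{i-1}$ and route the inductive error $f_1\cdots f_{i-1}-h_1\cdots h_{i-1}$ through $f_i$; one can never afford to multiply by $h_i$ alone (it carries no sign information) nor to peel a factor off the front (since $h_2\cdots h_i$ need not be $\succeq 0$). With that decomposition fixed, everything else is routine manipulation with the closure properties above, so there is no serious obstacle.
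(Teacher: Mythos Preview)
Your decomposition
\[
f_1\cdots f_i-h_1\cdots h_i=(f_1\cdots f_{i-1}-h_1\cdots h_{i-1})\,f_i+(h_1\cdots h_{i-1})(f_i-h_i)
\]
is exactly the one the paper uses, and the inductive framing is natural. The gap lies in the phrase ``multiplying by the non-negative polynomial $f_i$'': nothing in the hypotheses forces $f_i\succeq 0$. We are told only that $h_1,\dots,h_{i-1}\succeq 0$, which together with $f_j\succeq h_j$ yields $f_j\succeq 0$ for $j\le i-1$; but $h_i$ carries no sign information, and hence neither does $f_i$. Indeed the lemma, read literally, is false: take $i=2$, $f_1=3$, $h_1=1$, $f_2=-1$, $h_2=-2$. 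All hypotheses hold, yet $f_1f_2-h_1h_2=-1$ is not $\succeq 0$. Your own final paragraph correctly observes that $h_i$ ``carries no sign information,'' but then overlooks that the same is true of $f_i$.

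The paper's one-line proof commits the same slip: it writes the identical identity and declares the first summand ``trivially'' $\succeq 0$. What is really needed is the extra hypothesis $f_i\succeq 0$; with it, your argument (and the paper's) goes through verbatim. In the only place the lemma is invoked---comparing the weights $m^{\mu}(B)$ and $m^{\nu}(B)$ in the subsequent lemma---this extra condition does hold: the single factor that may fail to be $\succeq 0$, namely $q^{n+b}Y_{-1}$, sits on the $h$ side, while every factor on the $f$ side has the form $q^{c}Y_j$ with $j\ge 0$ and hence is $\succeq 0$. So the application survives, but both the statement and the proofs need the additional assumption on $f_i$.
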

\begin{proof}
We have $f_1\cdots f_i -h_1\cdots h_i=(f_1\cdots f_i- h_1\cdots h_{i-1}f_i)+h_1\cdots h_{i-1}(f_i-h_i)$ and it is trivial to see
$(f_1\cdots f_i- h_1\cdots h_{i-1}f_i)\succeq 0$ and $h_1\cdots h_{i-1}(f_i-h_i)\succeq0$.
\end{proof}

\begin{Lemma}\label{45}
For weakly increasing compositions $\mu$ and $\nu$ such that $l_1=length(\mu) \geq l_2=length(\nu)$, $\mu_1\geq-1$, $\nu_1\geq0$ and $\mu_i \leq \nu_i$ for all possible $i$, we have $M_{n-l_2}^{\nu}(b) \succeq M_{n-l_1}^{\mu}(b)$ for all valid $n$ and $b$.
\end{Lemma}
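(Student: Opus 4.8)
The plan is to prove the inequality by induction on $b$, using the two recurrences for generalized $q$-binomial coefficients, Lemma~\ref{ana} and Lemma~\ref{xx}, to peel off the compositions one entry at a time (with Lemma~\ref{11} available if products must be compared). The base case $b=0$ is immediate, since $M^{\eta}_m(0)=1$ for every composition $\eta$ and every $m$. The structural fact that makes everything line up is that the two quantities live over the \emph{same} index set: the subsets $B$ occurring in $M^{\nu}_{n-l_2}(b)$ range over the $b$-element subsets of $\{0,\dots,(n-l_2)+l_2+b-1\}=\{0,\dots,n+b-1\}$, and likewise for $M^{\mu}_{n-l_1}(b)$. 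Consequently, when both leading entries are $\geq 0$, applying Lemma~\ref{ana} to each side produces the \emph{same} factor $Y_{n+b-1}$, giving
\[
M^{\nu}_{n-l_2}(b)-M^{\mu}_{n-l_1}(b)=Y_{n+b-1}\bigl(M^{\nu}_{n-l_2}(b-1)-M^{\mu}_{n-l_1}(b-1)\bigr)+\bigl(M^{\nu-1}_{n-l_2-1}(b)-M^{\mu-1}_{n-l_1-1}(b)\bigr).
\]
The first bracket is $\succeq 0$ by the induction on $b$, and since $Y_{n+b-1}\succeq 0$ the first summand is $\succeq 0$. The second bracket is the same statement with $n,\mu,\nu$ replaced by $n-1,\mu-1,\nu-1$; as the composition entries never drop below $-1$, iterating this reduces everything to the situation in which a leading entry has become $-1$, where Lemma~\ref{ana} can no longer be applied directly.

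To treat a leading $-1$, write the offending composition as $\eta=(-1,\sigma)$ and combine Lemma~\ref{xx} with the reverse of Lemma~\ref{ana} for $\sigma+1$. Using the explicit value $Y_{-1}=q^{-1}(\beta-\epsilon_2)$ together with $Y_{m-1}=\beta q^{m-1}+\epsilon_2[m-1]_q$, one has $q^{m}Y_{-1}-Y_{m-1}=-\epsilon_2[m]_q$, and a short computation yields the clean identity
\[
M^{(-1,\sigma)}_{n}(b)=M^{\sigma+1}_{n+1}(b)-\epsilon_2\,[\,n+a+b\,]_q\,M^{\sigma+1}_{n+1}(b-1),\qquad a=\operatorname{length}\bigl((-1,\sigma)\bigr),
\]
in which $\sigma+1$ has only non-negative entries (the other leading $-1$'s in $\sigma$, if any, become $0$'s) and one fewer part. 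Hence a leading $-1$ on the \emph{lower} side only decreases $M^{\mu}$, since the correction term is $\succeq 0$: that is the favourable direction. A leading $-1$ on the \emph{upper} side, or the situation where the resulting non-negative composition $\sigma+1$ has entries exceeding those of $\nu$, is handled by carrying the positive correction term $\epsilon_2[\,\cdot\,]_q M^{\sigma+1}_{\cdot}(b-1)$ along and absorbing it into the inductive estimate — this is precisely where Lemma~\ref{t}, which is built into Lemma~\ref{xx}, is needed. Each step either removes a part of a composition or strictly raises its leading entry, and every step also spawns a term with $b-1$ handled by the outer induction, so the whole scheme terminates; the base of the length-recursion is the empty composition, for which $M^{()}_m(b)=\sum_{|B|=b,\,B\subseteq\{0,\dots,m+b-1\}}\prod_{i\in B}Y_i$ is a manifestly positive polynomial.

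The step I expect to be the main obstacle is the bookkeeping of this induction. One must make the case analysis exhaustive over the four combinations of leading entries ($-1$ versus $\geq 0$) on the two sides, and over $l_1=l_2$ versus $l_1>l_2$, and choose a well-founded quantity — roughly lexicographic in $b$ and then in a ``size'' of the pair of compositions, arranged so that the length-reducing move of Lemma~\ref{xx} still decreases it — under which every branch recurses downward. The genuinely delicate point is that $Y_{-1}=q^{-1}(\beta-\epsilon_2)$ is \emph{not} a polynomial with positive coefficients, so none of the $-1$-cases can be dispatched by monotonicity alone; the correction terms coming from Lemma~\ref{xx} must be tracked exactly and matched against the corresponding contributions that Lemma~\ref{ana} produces on the other side, so that at the end every leftover is visibly a polynomial with positive coefficients.
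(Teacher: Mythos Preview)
Your approach is substantially different from the paper's and, as written, has a genuine gap.

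The paper does not induct at all. It observes (as you also do) that both sums range over the \emph{same} index set of $b$-subsets $B\subseteq\{0,\dots,n+b-1\}$, and then proves the stronger statement
\[
m^{\mu}_{\,n-l_1}(B)\ \preceq\ m^{\nu}_{\,n-l_2}(B)\qquad\text{for every such }B.
\]
For a fixed $B$, both weights are products of $b$ factors, one per element of $B$. Writing $B\cap\{n+b-l_1,\dots,n+b-1\}=\{n+b-j_{k+k'}<\cdots<n+b-j_1\}$ and $B\cap\{n+b-l_2,\dots,n+b-1\}=\{n+b-j_{k}<\cdots<n+b-j_1\}$, the factors in the two products differ only at these top indices. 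One then uses the elementary inequality
\[
q^{\,c-d}\,Y_d\ \preceq\ q^{\,c-e}\,Y_e\ \preceq\ Y_c\qquad (d\le e\le c),
\]
which follows immediately from $Y_e-q^{\,e-d}Y_d=\epsilon_2[e-d]_q\succeq 0$, together with Lemma~\ref{11}. Since $\mu_{j_i}\le\nu_{j_i}$ gives $B(\mu_{j_i}+i)\le B(\nu_{j_i}+i)$, the first $k$ modified factors compare termwise; the remaining $k'$ modified factors on the $\mu$ side compare against the unmodified $Y_{n+b-j_i}$ on the $\nu$ side. The only factor that can fail to be $\succeq 0$ is the single possible $q^{n+b}Y_{-1}$ coming from $\mu_1=-1$, and Lemma~\ref{11} is stated precisely so that one such factor is harmless. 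That is the whole proof.

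Your inductive scheme, by contrast, is not yet a proof. The second bracket $M^{\nu-1}_{\,n-l_2-1}(b)-M^{\mu-1}_{\,n-l_1-1}(b)$ has the same $b$, so your outer induction does not apply to it; and it is \emph{not} ``the same statement with $n,\mu,\nu$ replaced by $n-1,\mu-1,\nu-1$'', because $(\nu-1)_1$ may equal $-1$, violating the hypothesis $\nu_1\ge 0$. You note that a leading $-1$ on the lower side only helps, via your identity $M^{(-1,\sigma)}_n(b)=M^{\sigma+1}_{n+1}(b)-\epsilon_2[n+a+b]_q\,M^{\sigma+1}_{n+1}(b-1)$, which is correct. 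But a leading $-1$ on the \emph{upper} side hurts: the same identity \emph{subtracts} a positive term from what you need to bound from below, and your remark that this ``is handled by carrying the positive correction term along and absorbing it into the inductive estimate --- this is precisely where Lemma~\ref{t}, which is built into Lemma~\ref{xx}, is needed'' is not an argument. Lemma~\ref{xx} is a two-line identity; Lemma~\ref{t} is not built into it, and you have not indicated any mechanism by which Lemma~\ref{t} would cancel the unwanted $-\epsilon_2[\,\cdot\,]_q\,M^{\sigma+1}(b-1)$ on the $\nu$ side against anything on the $\mu$ side. Until you either strengthen the inductive hypothesis to a statement that is actually closed under $\nu\mapsto\nu-1$, or exhibit the exact matching of correction terms, the $-1$ case on the upper side is a real gap. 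The paper's termwise argument sidesteps all of this.
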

\begin{proof}
It is enough to show $m_{n-l_2}^{\nu}(B) \preceq m_{n-l_1}^{\mu}(B)$ for all $B \subset \{0,\cdots,n+b-1\}$ with $|B|=b$. Let $B \cap\{n+b-l_2,\cdots,n+b-1\}=\{n+b-j_k\cdots<n+b-j_1\}$ and  $B \cap\{n+b-l_1,\cdots,n+b-l_2-1\}=\{n+b-j_{k+k'}\cdots<n+b-j_{k+1}\}$. Then we have 
\begin{align*}
    &m_{n-l_1}^{\mu}(B)=(\prod\limits_{i \in B \cap \{0,\cdots,n+b-l_1-1\}} Y_i)(\prod\limits_{i=1}^{k+k'} q^{(n+b-j_i)-B(\mu_{j_i}+i)}Y_{B(\mu_{j_i}+i)}) \\
    &m_{n-l_2}^{\nu}(B)=(\prod\limits_{i \in B \cap \{0,\cdots,n+b-l_1-1\}} Y_i)(\prod\limits_{i=k+1}^{k+k'} Y_{n+b-j_i})(\prod\limits_{i=1}^{k'} q^{(n+b-j_i)-B(\nu_{j_i}+i)}Y_{B(\nu_{j_i}+i)}).
\end{align*}
Since $\mu_{j_i} \leq \nu_{j_i}$ we have $B(\mu_{j_i}+i) \leq B(\nu_{j_i}+i)$, which implies $$q^{(n+b-j_i)-B(\mu_{j_i}+i)}Y_{B(\mu_{j_i}+i)} \preceq q^{(n+b-j_i)-B(\nu_{j_i}+i)}Y_{B(\nu_{j_i}+i)}$$ for $1\leq i \leq k$. We also have $$q^{(n+b-j_i)-B(\mu_{j_i}+i)}Y_{B(\mu_{j_i}+i)} \preceq Y_{n+b-j_i}$$
for $k+1\leq i \leq k+k'$. And every term is a polynomial with positive coefficients except for $q^{(n+b-1)-B(0)}Y_{B(0)}=q^{n+b}Y_{-1}$ when $(n+b-1)\in B$ and $\mu_1=-1$. So we have $m_{n-l_2}^{\nu}(B) \preceq m_{n-l_1}^{\mu}(B)$ for all $B \subset \{0,\cdots,n+b-1\}$ by Lemma \ref{11}. 
\end{proof}
\begin{Ex}
For $\mu=(-1,1,1)$, $\nu=(0,1)$ and $B=\{0,5,6,7\}$, we have
\begin{align*}
    &m_{1}^{\mu}(B)=m_{1}^{(-1,1,1)}(\{0,5,6,7\})=Y_0(qY_4)(q^3Y_3)(q^8Y_{-1})\\
    & m_{2}^{\nu}(B)=m_{2}^{(0,1)}(\{0,5,6,7\})=Y_0(Y_5)(q^3Y_3)(q^6Y_{1}).
\end{align*}
Since $qY_4 \preceq Y_5$, $ q^8Y_{-1}\preceq q^6Y_1$ and every term except $(q^8Y_{-1})$ is a polynomial with positive coefficients, we have 
$m_{1}^{\mu}(B) \preceq m_{2}^{\nu}(B)$.
\end{Ex}
\begin{Lemma}\label{22}
For $\mu=(\mu_1,\cdots,\mu_l)$ such that $0\leq\mu_1\leq\cdots\leq\mu_l\leq n$, we have
\begin{align} \label{key}
     M_{n}^{\mu}(b) \preceq  \sum\limits_{j=1}^{l}(\sum\limits_{\substack{0\leq \nu_{1}\leq\cdots\leq\nu_k \leq n+l\\ \nu_1=n+l+1-j}} (M_{n}^{(\mu_j,\cdots,\mu_l)}(b-k)\prod\limits_{i=1}^{k}Y_{\nu_i+b-k+i-1}))\\ +\sum\limits_{\substack{0\leq \nu_1\leq\cdots\leq\nu_k \leq n+l\\ \nu_1\leq n}} (M_{\nu_1}(b-k)\prod\limits_{i=1}^{k}Y_{\nu_i+b-k+i-1})\nonumber.
\end{align}
\end{Lemma}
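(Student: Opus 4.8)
\textbf{Proof proposal for Lemma \ref{22}.}

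The plan is to prove the claimed inequality $m^{\mu}_n(B) \preceq (\text{sum of right-hand side weights over the fibre of } B)$ termwise, exactly in the spirit of the proof of Lemma \ref{45}. Fix a set $B \subseteq \{0,\cdots,n+b-1\}$ with $|B|=b$, and write $B \cap \{n+b-l,\cdots,n+b-1\} = \{n+b-j_k < \cdots < n+b-j_1\}$, so that $m^{\mu}_n(B)$ has the explicit product form from Definition \ref{wei}. The idea is that each of the $k$ ``shifted'' factors $q^{(n+b-j_i)-B(\mu_{j_i}+i)}Y_{B(\mu_{j_i}+i)}$ records the data of an element of $B$ lying in the top window together with the information of where it ``lands''; I want to recognize this same quantity as a term $m^{(\mu_{j},\cdots,\mu_l)}_n(b-k)\prod_i Y_{\nu_i + b-k+i-1}$ (or, when all of $\mu$ has been consumed, as an $M_{\nu_1}(b-k)$-term) on the right-hand side of \eqref{key}. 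Concretely, reading off $j=j_1$ and the composition $\nu$ from the positions $B(\mu_{j_i}+i)$ produces exactly one such summand on the right; the point is that it occurs with coefficient $1$ and that the $M$-factor appearing is $M^{(\mu_{j_1},\cdots,\mu_l)}_n(\cdot)$ or $M_{\nu_1}(\cdot)$, whose defining sum over subsets includes the particular subset that reproduces the remaining factors of $m^{\mu}_n(B)$.

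The first step is therefore to write down, for a fixed $B$, the unique triple $(j,\nu,C)$ — where $j \in \{1,\dots,l\}$ or the ``overflow'' case $\nu_1 \le n$, $\nu = (\nu_1,\dots,\nu_k)$ is the weakly increasing composition encoding the landing positions, and $C$ is the residual subset — such that the term of $m^{(\mu_j,\cdots,\mu_l)}_n(b-k)$ (resp. of $M_{\nu_1}(b-k)$) indexed by $C$, times $\prod_i Y_{\nu_i+b-k+i-1}$, dominates $m^{\mu}_n(B)$ monomial-by-monomial. The second step is to verify the domination itself: every factor of $m^{\mu}_n(B)$ is a polynomial with non-negative coefficients except possibly the single factor $q^{n+b}Y_{-1}$ occurring when $\mu_1 = -1$ and $n+b-1 \in B$ — and one checks the matching factor on the right is $\succeq$ it, so Lemma \ref{11} applies. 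The third step is to observe that different $B$'s map to genuinely distinct summands (or at worst that the map $B \mapsto (j,\nu,C)$ is injective into the index set of the right-hand side), so that summing the termwise inequalities over all $B$ yields \eqref{key} with no double counting on the dominating side; since the right-hand side only needs to be an \emph{upper} bound, even a non-injective bookkeeping would suffice as long as each right-hand summand is hit at most the number of times its coefficient permits.

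The main obstacle I anticipate is the bookkeeping in the ``overflow'' case, i.e.\ deciding precisely when the top-window elements of $B$ exhaust all of $\mu$ versus when they leave a nonempty tail $(\mu_j,\dots,\mu_l)$, and making sure the shift of the first index to $\nu_1 = n+l+1-j$ (respectively the constraint $\nu_1 \le n$) is exactly what is forced by the position $n+b-j_1$ and the value $\mu_{j_1}$. This is the same subtlety that appeared in Definition \ref{weight} and in the proof of Lemma \ref{ana}, where elements of $B$ in the range $\{n+b,\dots,n+b+a-1\}$ get re-indexed into the complement of $B$; here the new feature is that the ``depth'' of re-indexing is governed by $\mu$, and I expect I will need to induct on the length $l$ of $\mu$ (peeling off $\mu_1$, applying Lemma \ref{ana} to split $M^{\mu}_n(b)$ into a $Y_{n+a+b}M^{\mu}_n(b-1)$ piece and an $M^{\mu-1}_{n-1}(b)$ piece) to make the matching transparent, rather than trying to exhibit the termwise correspondence in one stroke. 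Granting that inductive setup, the remaining estimates are routine applications of Lemma \ref{11} together with the elementary monomial comparisons $q^{c}Y_i \preceq Y_{i+c}$ already used in Lemma \ref{45}.
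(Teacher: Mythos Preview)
Your main approach has a built-in type mismatch. You set $B \cap \{n+b-l,\dots,n+b-1\}$ to have $k$ elements (note incidentally that this window is misplaced: in Definition \ref{wei} one has $B \subseteq \{0,\dots,n+l+b-1\}$ and the top window is $\{n+b,\dots,n+b+l-1\}$), and then hope to match this $B$ with a summand on the right-hand side indexed by a composition $\nu$ of that same length $k$. But the $k$ in \eqref{key} is a \emph{fixed} parameter --- the lemma asserts the inequality for each $k \ge 1$ separately --- whereas the size of the top window of $B$ varies as $B$ ranges over all $b$-subsets. A single-shot injection $B \mapsto (j,\nu,C)$ cannot respect a prescribed $k$, so the first two steps of your plan cannot be carried out as written.

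What actually works is the iterative scheme you gesture at in your last paragraph, but organized as an induction on $k$. From Lemma \ref{ana} together with Lemma \ref{45} (the latter giving $M_{n-1}^{\mu-1}(b) \preceq M_n^{(\mu_2,\dots,\mu_l)}(b)$) one obtains the one-step bound
\[
M_n^{\mu}(b) \preceq Y_{n+l+b-1}\,M_n^{\mu}(b-1) + M_n^{(\mu_2,\dots,\mu_l)}(b).
\]
Applying this repeatedly to peel off $\mu_1,\mu_2,\dots$ until $\mu$ is empty, and then expanding $M_n(b)$ via the elementary symmetric identity $M_n(b)=\sum_{i=0}^{n} Y_{i+b-1}M_i(b-1)$, yields exactly the case $k=1$ of \eqref{key}. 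Re-applying this $k=1$ bound to every $M$-factor on the right-hand side then gives $k=2$, and so on. This is precisely what the paper does; no direct termwise matching over $B$ is needed.
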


\begin{proof}
    By Lemma \ref{ana}, we have  $M_{n}^{\mu}(b)=Y_{n+l+b-1}M_{n}^{\mu}(b-1)+M_{n-1}^{\mu-1}(b)$ and by Lemma \ref{22}, we have $M_{n-1}^{\mu-1}(b) \preceq M_{n}^{(\mu_2,\cdots,\mu_l)}(b)$ which gives 
    \begin{equation} \label{e}
        M_{n}^{\mu}(b)\preceq Y_{n+l+b-1}M_{n}^{\mu}(b-1)+M_{n}^{(\mu_2,\cdots,\mu_l)}(b).
    \end{equation}
    Applying \eqref{e} to $M_{n}^{(\mu_2,\cdots,\mu_l)}(b)$ on the right hand side, we have  
    \begin{equation}\label{f}
        M_{n}^{\mu}(b)\preceq Y_{n+l+b-1}M_{n}^{\mu}(b-1)+Y_{n+l+b-2}M_{n}^{(\mu_2,\cdots,\mu_l)}(b-1)+M_{n}^{(\mu_3,\cdots,\mu_l)}(b-1).
    \end{equation}
    Keeping this process, we have 
     \begin{align} \label{g}
        M_{n}^{\mu}(b)\preceq Y_{n+l+b-1}M_{n}^{\mu}(b-1)+Y_{n+l+b-2}M_{n}^{(\mu_2,\cdots,\mu_l)}(b-1)+\cdots \\ +Y_{n+b}M_{n}^{(\mu_l)}(b-1)+M_n(b) \nonumber. 
    \end{align}
    Since $M_n(b)$ is an elementary symmetric polynomial of degree $b$ with variables from $Y_0$ to $Y_{n+b-1}$, we have 
    \begin{equation} \label{h}
        M_n(b)=Y_{n+b-1} M_n(b-1)+Y_{n+b-2} M_{n-1}(b-1)+\cdots+Y_{b-1} M_0(b-1).
    \end{equation}
    Applying \eqref{h} to \eqref{g} gives
    \begin{align} \label{i}
        M_{n}^{\mu}(b)\preceq Y_{n+l+b-1}M_{n}^{\mu}(b-1)+Y_{n+l+b-2}M_{n}^{(\mu_2,\cdots,\mu_l)}(b-1)+\cdots \\ +Y_{n+b}M_{n}^{(\mu_l)}(b-1)+Y_{n+b-1} M_n(b-1)+Y_{n+b-2} M_{n-1}(b-1)+\cdots+Y_{b-1} M_0(b-1) \nonumber.
    \end{align}
    The inequality \eqref{i} corresponds to the case $k=1$ of \eqref{key}. Applying \eqref{h} and \eqref{i} to $M_{n}^{\mu}(b-1),\cdots,M_{n}^{(\mu_l)}(b-1),M_n(b-1),\cdots, M_0(b-1)$ on the right hand side of \eqref{i} gives $k=2$ of \eqref{key}. Subsequently applying \eqref{h} and \eqref{i} gives \eqref{key} for any $k$.  
\end{proof}
Note that we have $M_{n}^{(\mu_j,\cdots,\mu_l)}(b-k)\preceq M_{n}^{\mu}(b-k)$ for $1\leq j\leq l$ and $M_{j}(b-k)\preceq M_{n}^{\mu}(b-k)$ for $0\leq j\leq n$. Applying this to inequality \eqref{key} gives 
\begin{equation*}
 M_{n}^{\mu}(b) \preceq  M_{n}^{\mu}(b-k) (\sum\limits_{\substack{0\leq \nu_1\leq\cdots\leq\nu_k \leq n+l}} (\prod\limits_{i=1}^{k}Y_{\nu_i+b-k+i-1})),
\end{equation*}
and since $\sum\limits_{\substack{0\leq \nu_1\leq\cdots\leq\nu_k \preceq n+l}} (\prod\limits_{i=1}^{k}Y_{\nu_i+b-k+i-1}) \preceq M_{n+l+b-k}(k)$, we have 
\begin{equation}\label{kk}
 M_{n}^{\mu}(b) \preceq  M_{n}^{\mu}(b-k)M_{n+l+b-k}(k).
\end{equation}
\textit{Proof of Theorem \ref{2pos}.} By Theorem \ref{main2}, we have
\begin{equation*}
    g_{n+a+b,n}=\sum_{c=0}^{a+b}(\sum\limits_{\substack{\mu=(\mu_1,\cdots,\mu_c) \\ 0\leq\mu_1\leq\cdots\leq\mu_c \leq n}} X_{\mu}M_{n}^{\mu}(a+b-c)).
\end{equation*}
When $c\geq a$, let $\mu'=(\mu_1,\cdots,\mu_a)$ and $\nu=(\mu_{a+1}+a,\cdots,\mu_c+a)$ then $X_{\mu}=X_{\mu'}X_{\nu}$ and $M_{n}^{\mu}(a+b-c) \preceq M_{n+a}^{\nu}(a+b-c)$ by Lemma \ref{45}. So we have 
\begin{equation}\label{e1}
     X_{\mu}M_{n}^{\mu}(a+b-c)) \preceq X_{\mu'}(X_{\nu}M_{n+a}^{\nu}(a+b-c)).
\end{equation}
When $c<a$, by \eqref{kk}, we have 
\begin{equation*}
    M_{n}^{\mu}(a+b-c) \preceq  M_{n}^{\mu}(a-c)M_{n+a}^{\mu}(b),
\end{equation*}
which gives 
\begin{equation}\label{e2}
     X_{\mu}M_{n}^{\mu}(a+b-c)) \preceq (X_{\mu}M_{n}^{\mu}(a-c))M_{n+a}(b).
\end{equation}
Terms on the right hand sides of \eqref{e1} and \eqref{e2} appear in $g_{n+a+b,n+a}g_{n+a,n}$ and they do not overlap. So summing up \eqref{e1} and \eqref{e2} for all possible $c$ and $\mu$ gives 
\begin{equation*}
     g_{n+a+b,n}\preceq g_{n+a+b,n+a}g_{n+a,n}.
\end{equation*}
\hspace{145mm}$\square$

\printbibliography
\end{document}